\definecolor{couleurCitations}{rgb}{0,0.65,0}
\definecolor{couleurRef}{rgb}{0.75,0,0}
\newtheorem{Th}{Theorem}
\newtheorem{Remark}{Remark}
\newtheorem{Prop}{Proposition}
\newtheorem{lemma}{Lemma}
\newtheorem{definition}{Definition}
\def\1{\mathds{1}}
\def\var{\mathrm{Var}}
\renewenvironment{proof}{\noindent{\bf Proof.}}{\hfill $\Box$\par\noindent}
\newcommand{\el}{l}
\newcommand{\vv}{v}
\newcommand{\Sj}{S_j}
\newcommand{\Seta}{S_{\eta}}
\newcommand{\E}{\ensuremath{\mathbb{E}}}
\renewcommand{\P}{\ensuremath{\mathbb{P}}}
\newcommand{\R}{\ensuremath{\mathbb{R}}}
\newcommand{\Z}{\ensuremath{\mathbb{Z}}}
\newcommand{\N}{\ensuremath{\mathbb{N}}}
\newcommand{\e}{\ensuremath{\varepsilon}}
\newcommand{\eps}{\varepsilon}
\newcommand{\parinc}[2]{\parbox[c]{#1}{\includegraphics[width=#1]{#2}}}
\begin{document}
\title{Adaptive wavelet multivariate regression with errors in variables}
\author{Micha\"el Chichignoud \thanks{ETH Z\"urich}, Van Ha Hoang \thanks{Laboratoire Paul Painlev\'e UMR CNRS 8524, Universit\'e Lille 1- Sciences et Technologies.}, Thanh Mai Pham Ngoc \thanks{Laboratoire  de Math\'ematiques, UMR 8628, Universit\'e Paris Sud.} and Vincent Rivoirard\thanks{CEREMADE, UMR CNRS 7534, Universit\'e Paris Dauphine.}}
\maketitle

\begin{abstract}
In the multidimensional setting, we consider the errors-in-variables model. We aim at estimating the unknown nonparametric multivariate regression function with errors in the covariates. We devise an adaptive estimator based on projection kernels on wavelets and a deconvolution operator. We propose an automatic and fully data driven procedure to select the wavelet level resolution. We obtain an oracle inequality and optimal rates of convergence over anisotropic H\"older classes. Our theoretical results are illustrated by some simulations. 

\end{abstract}

\noindent \textbf{Keywords} : Adaptive wavelet estimator.  Anisotropic regression.  Deconvolution. Measurement errors. 
   \\%
\textbf{Primary subjects}.  62G08

 \section{Introduction}

 We consider the problem of multivariate nonparametric regression with errors in variables.  We observe the i.i.d dataset
 $$(W_1,Y_1),\dots,(W_n,Y_n)$$
where
 $$ Y_l=m(X_\el) + \varepsilon_\el $$ and
 $$W_\el=X_\el+\delta_\el, $$
 with $Y_\el\in \R$. 
 The covariates errors $\delta_\el$ are i.i.d unobservable random variables having error density $g$. We assume that $g$ is known. The $\delta_\el$'s  are independent of the $X_\el$'s and $Y_\el$'s. The $\varepsilon_\el$'s are i.i.d standard normal random variables with variance $s^2$. We wish to estimate the regression function $m(x), x \in [0,1]^d$, but direct observations of the covariates $X_\el$ are not available. Instead due to the measuring mechanism or the nature of the environment,
the covariates $X_\el$ are measured with errors. Let us denote $f_X$ the density of the $X_\el$'s assumed to be positive and $f_W$ the density of the $W_\el$'s.  

Use of errors-in-variables models appears in many areas of science such as medicine, econometry or astrostatistics  and is appropriate in a lot of  practical experimental problems. For instance, in epidemiologic studies where risk factors are partially observed  (see \cite{Whittemore}, \cite{Fan92})  or in environmental science where air quality is measured with errors (\cite{DelaigleHallJamshidi}).

In the error-free case, that is $\delta_l=0$, one retrieves the classical multivariate nonparametric regression problem. Estimating a function in a nonparametric way from data measured with error is not an easy problem. Indeed, constructing a consistent estimator in this context is challenging as we have to face to a deconvolution step in the estimation procedure. Deconvolution problems arise in many fields where data are obtained with measurement errors and has attracted a lot of attention  in the statistical literature, see \cite{Meister} for an excellent source of references. The nonparametric regression with errors-in-variables model has been the object of a lot of attention as well, we may cite the works of  \cite{Fan92}, \cite {Fan93}, \cite{Ioannides}, \cite{Koo98}, \cite{Meister}, \cite{ComteTaupin}, \cite{Chesneau}, \cite{DuZouWang}, \cite{CarrollDelaigleHall}, \cite{DelaigleHallJamshidi}. The literature has mainly to do with kernel-based approaches, based on the Fourier transform. All the works cited have tackled the univariate case except for \cite {Fan92} where the authors explored the asymptotic normality for mixing processes. In the one dimensional setting, 
\cite{Chesneau} used Meyer wavelets in order to devise his statistical procedure but his assumptions on the model are strong since the corrupted observations $W_\el$ follow a uniform density on $[0,1]$. \cite{ComteTaupin} investigated the mean integrated squared error with a penalized estimator based on projection methods upon Shannon basis. But the authors do not give any clue about how to choose the resolution level of the Shannon basis. Furthermore, the constants in the penalized term are calibrated via intense simulations.

In the present article, our aim is to study  the multidimensional setting and the pointwise risk. We would like to take into account the anisotropy for the function to estimate. Our approach relies on the use of projection kernels on wavelets bases combined with a deconvolution operator taking into account the noise in the covariates. When using wavelets, a crucial point lies in the choice of the resolution level. But it is well-known that theoretical results in adaptive estimation do not provide the way to choose  the numerical constants in the resolution level and very often lead to conservative choices. We may cite the work of \cite{Nickl} which attempts to tackle this problem. For the density estimation problem and the sup-norm loss, the authors based their statistical procedure on Haar projection kernels and provide a way to choose locally the resolution level. Nonetheless, in practice, their procedure relies on  heavy Monte Carlo simulations to calibrate the constants. In our paper the resolution level of our estimator is optimal and fully data-driven. It is automatically selected by a method inspired from \cite{GL} to tackle anisotropy problems. This method has been used recently in various contexts (see  \cite{DHRR},  \cite{ComteLacour} and \cite{BLR}). Furthermore, we do not resort to thresholding which is very popular when using wavelets and our selection rule is adaptive to the unknown regularity of the regression function. 
We obtain oracle inequalities and provide optimal rates of convergence for anisotropic H\"older classes. The performances of our adaptive estimator, the negative impact of the errors in the covariates, the effects of the design density are assessed by examples based on simulations. 

The paper is organized as follows. In Section \ref{secestimation}, we describe our estimation procedure. In Section \ref{secrates}, we provide an oracle inequality and rates of convergences of our estimator for the pointwise risk. Section \ref{secsimu} gives some numerical illustrations. Proofs of Theorems, propositions and technical lemmas are to be found in section \ref{secpreuve}. 

\paragraph{Notation}
 Let $\N=\{0,1,2,\dots\}$ and $j=(j_1,\dots,j_d)\in\N^d$, we set $S_j =\sum_{i=1}^d j_i$
and for any $y\in\R^d$, we set, with a slight abuse of notation,
 $$2^j y:=(2^{j_1}y_1,\dots,2^{j_d}y_d)$$
and for any $k=(k_1,\cdots,k_d)\in\Z^d$,   $$h_{j,k}(y):=2^{\frac \Sj 2}h(2^jy-k)= 2^{\frac \Sj 2}h(2^{j_1}y_1-k_1,\dots,2^{j_d}y_d-k_d),$$
 for any given function $h$. We denote by $\mathcal{F}$ the Fourier transform of any function $f$ defined on $\R^d$ by
 $$\mathcal{F}(f)(t)=\int_{\R^d}  e^{-i<t,y>}f(y)dy, \quad t\in \R^d,$$
 where $<.,.>$ denotes the usual scalar product. 
 
 For two integers $a,b$, we denote $a \wedge b: = \min(a,b)$ and  $a \vee b: = \max(a,b)$.  And $ \lfloor y\rfloor$  denotes the largest integer smaller than $y$ :  $ \lfloor y\rfloor  \leq y < \lfloor y\rfloor +1.$ 

 \section{The estimation procedure}\label{secestimation}
 For estimating the regression function $m$, the idea consists in writing $m$ as the ratio
 $$m(x)=\frac{m(x)f_X(x)}{f_X(x)},\quad x\in[0,1]^d.$$
 In the sequel, we denote $$p(x):=m(x)\times f_X(x).$$ 
So, we estimate $p$, then $f_X$.
Since estimating $f_X$ is a classical deconvolution problem, the main task consists in estimating $p$. We propose a wavelet-based procedure with an automatic choice of the maximal resolution level. Section~\ref{sec:kernel} describes the construction of the projection kernel on wavelet bases depending on a maximal resolution level. Section~\ref{sec:level} describes the Goldenshluger-Lepski procedure to select the resolution level adaptively.
\subsection{Approximation kernels and family of estimators for $p$}\label{sec:kernel}
We consider noise densities $g=(g_1,\cdots,g_d)$ which satisfy the following relationship (see  \cite{FanKoo}) :
 \begin{equation}
   \mathcal{F}(g)(t) = \prod_{\el=1}^d \mathcal{F}(g_\el)(t_\el), \quad t_\el\in \R. 
   \end{equation} 
In the sequel, we consider a father wavelet $\varphi$ on the real line satisfying the following conditions: 
\begin{itemize}
 \item  (A1) The father wavelet $\varphi$ is compactly supported on $[-A,A]$, where $A$ is a positive integer. 
\item (A2) There exists  a positive integer $N$, such that for any $x$
$$\int \sum_{k\in\Z}\varphi(x-k)\varphi(y-k)(y-x)^{\ell}dy=\delta_{0\ell}, \quad \ell=0,\ldots,N.$$
\item (A3) $\varphi$ is of class $\mathcal{C}^r$, where $r\geq 2$. 
\end{itemize}
These properties are satisfied for instance by Daubechies and Coiflets wavelets  (see \cite{Hardle}, chapter 8).  The associated projection kernel on the space $$V_j:=\mbox{span}\{ \varphi_{jk},  k\in \mathbb{Z}^d\}, \quad j \in \N^d, $$
is given for any $x$ and $y$ by 
$$K_j(x,y)= \sum_k  \varphi_{jk}(x){\varphi_{jk}(y}),$$
where for any $x$,
 $$\varphi_{jk}(x)=\prod_{\el=1}^d2^{\frac{j_\el}{2}}\varphi(2^{j_\el}x_\el-k_\el), \quad j\in \N^d, \; k \in \Z^d.$$ 
Therefore, the projection of $p$ on $V_j$ can be written for any $z$,
$$p_j(z):=K_j(p)(z):= \int K_j(z,y)p(y)dy=\sum_{k} p_{jk}\varphi_{jk}(z)$$
with $$p_{jk}=\int p(y) {\varphi_{jk}}(y) dy.$$
 First we estimate unbiasedly any projection $p_j$. Secondly to obtain the final estimate of $p$, it will remain to select a convenient value of $j$ which will be done in section \ref{sec:level}. The natural approach is based on unbiased estimation of the projection coefficients $p_{jk}$. To do so, we adapt the kernel approach proposed by \cite{Fan93} in our wavelets context. 
To this purpose, we set
$$
\hat{p}_{jk}:=  \frac 1 n \sum_{u=1}^n Y_u \times (\mathcal{D}_j \varphi)_{j,k}(W_u)=2^{\frac \Sj  2} \frac 1 n \sum_{u=1}^n Y_u \int e^{-i<t,2^jW_u-k>}  \prod_{\el=1}^d
\frac{\overline{\mathcal{F}(\varphi)(t_\el)}}{\mathcal{F}(g_\el)(2^{j_\el}t_\el)} dt, $$
$$\hat{p}_{j}(x)= \frac 1 n   \sum_k \sum_{u=1}^n Y_u \times (\mathcal{D}_j \varphi)_{j,k}(W_u)\varphi_{jk}(x),$$
 where the deconvolution operator $\mathcal{D}_j$ is defined as follows for a function $f$ defined on $\R$ 
 \begin{equation}\label{operatorK_j}
 (\mathcal{D}_j f)(w)= \int  e^{-i<t,w>} \prod_{\el=1}^d
\frac{\overline{\mathcal{F}(f)(t_\el)}}{\mathcal{F}(g_\el)(2^{j_\el}t_\el)} dt, w \in \R^d.
\end{equation}
Lemma \ref{SansBiais}, proved in section \ref{preuveprop} states that $\E[\hat{p}_j(x)]=p_j(x)$ which justifies our approach. Furthermore, the deconvolution operator $(\mathcal{D}_j f)(w)$ in (\ref{operatorK_j}) is the multidimensional wavelet analogous of the operator $K_n(x)$ defined in (2.4) in \cite{Fan93}: the Fourier transform of their kernel $K$ has been replaced in our procedure by the Fourier transform of the wavelet $\varphi_{jk}$ and their bandwith $h$ by $2^{-j}$.

Note that the definition of the estimator $\hat{p}_{j}(x)$ still makes sense when we do not have any noise on the variables $X_\el$ i.e $g(x)=\delta_0(x)$ because
in this case $\mathcal{F}(g)(t)=1$.
\subsection{Selection rule by using the Goldenshluger-Lepski methodology}\label{sec:level}
The second and final step consists in selecting the multidimensional resolution level $j$ depending on $x$ and based on a data-driven selection rule inspired from a method exposed in \cite{GL}. To define this latter we have to introduce some quantities.
In the sequel we denote for any $w \in \R^d$,
$$T_j(w):=\sum_k (\mathcal{D}_j \varphi)_{j,k}(w) \varphi_{jk}(x)$$
and
$$U_j(y,w):=y\sum_k (\mathcal{D}_j \varphi)_{j,k}(w) \varphi_{jk}(x)=y\times T_j(w),$$
so we have
$$\hat{p}_{j}(x)=\frac{1}{n}\sum_{u=1}^nU_j(Y_u,W_u).$$
Proposition~\ref{Th:conc} {in Section \ref{preuveprop}} shows that $\hat p_j(x)$ concentrates around $p_j(x)$. So the idea is to find  a maximal resolution $\hat j$ that mimics the oracle index. The oracle index minimizes a bias variance trade-off. So we have to find an estimation for the bias-variance decomposition of $\hat p_j(x)$. We denote $ \sigma_j^2:= \var(U_j(Y_1,W_1)) $ and the variance of $\hat p_j$ is thus equal to $\frac{\sigma_j^2}{n}.$ We set :
\begin{equation} \label{sigmahat}
\hat\sigma^2_j:=\frac{1}{n(n-1)}\sum_{\el =2}^n\sum_{\vv=1}^{\el -1}(U_j(Y_\el ,W_\el )-U_j(Y_\vv,W_\vv))^2, 
\end{equation} 
and since $\E(\hat \sigma_j^2)=\sigma_j^2$, $\hat\sigma^2_j$ is a natural estimator of $\sigma^2_j$. To devise our procedure, we introduce a slightly overestimate of $\sigma_j^2$  given by:
\begin{equation}\label{sigmatilde}
\tilde\sigma^2_{j,\tilde\gamma}:=\hat\sigma^2_j+2C_j\sqrt{2 \tilde\gamma \hat\sigma_j^2\frac{\log n}{n}}+8 \tilde\gamma  C_j^2 \frac{\log n}{n},
\end{equation}
where $\tilde \gamma$ is a positive constant and 
 $$C_j : =\left(\|m\|_\infty+s\sqrt{2\tilde\gamma\log n}\right)\|T_j\|_\infty.$$
 For any $\eps>0$, let $\gamma > 0$ and 
  \begin{equation*}
   {\Gamma}_{\gamma}(j): =  \sqrt{\frac{2 \gamma (1+\e)\tilde\sigma_{j,\tilde\gamma}^2\log n}{n}}+\frac{c_j \gamma \log n}{n}, 
  \end{equation*}
  where 
  $$c_j : =16\left(2\|m\|_\infty+s\right)\|T_j\|_\infty.$$ Let
$$ { \Gamma}_{\gamma}(j,j'):= {\Gamma}_{\gamma}(j)+ {\Gamma}_{\gamma}(j\wedge j'),  $$
and
 \begin{equation}\label{Gammaqjetoile}
  {\Gamma}_{\gamma}^*(j):=\sup_{j'}  { \Gamma}_{\gamma}(j,j').
   \end{equation} 
We now define the selection rule for the resolution index. Let
\begin{equation}\label{R_j}
\hat R_j:=\sup_{j'}\Big\{\left|\hat p_{j\wedge j'}(x)-\hat p_{j'}(x)\right|-{ \Gamma}_{\gamma}(j',j)\Big\}_{+}+ 
{\Gamma}^*_{\gamma}(j).
\end{equation}
Then $\hat p_{\hat j}(x)$ is the final estimator of $p(x)$ with $\hat{j}$ such that
\begin{equation}\label{hatj}
\hat j:=\arg\min_{j\in J } \hat R_j,
\end{equation}
where the set $J$ is defined as 
\begin{equation}\label{setJ}
 J: =\left \{ j\in \N^d:\quad 2^{\Sj}\leq \left \lfloor  {\frac{n}{\log^2n }} \right \rfloor \right \}.
 \end{equation}
Now, we shall highlight how the above quantities interplay in the estimation of the risk decomposition of $\hat p_j$. 
An inspection of the proof of Theorem \ref{oracle} shows that a control of  the bias of $\hat p_j$ is provided by :
$$ \sup_{j'}\Big\{\left|\hat p_{j\wedge j'}(x)-\hat p_{j'}(x)\right|-{ \Gamma}_{\gamma}(j',j)\Big\}_{+}.$$ 
The term  $|\hat p_{j\wedge j'}(x)-\hat p_{j'}| $ is classical when using the Goldenshluger Lepski method (see sections 2.1 and 5.2 in \cite{BLR}).  Furthermore for technical reasons (see proof of Theorem \ref{oracle}), we do not estimate the variance of $\hat p_j(x)$ by $\frac{\hat \sigma_j^2}{n}$ but we replace it by  $\Gamma^2_\gamma(j)$. Note that we have the straightforward control 
$$
\Gamma_\gamma(j) \leq C \left (\hat \sigma_j \sqrt{\frac{\log n }{n}} + (C_j+c_j) \frac{\log n }{n}\right ),
$$
where $C$ is a constant depending on $\varepsilon$, $\tilde \gamma$ and $\gamma$. 
Actually we prove   that $\Gamma^2_\gamma(j) $ is of order $\frac{\log n}{n} \sigma_j^2 $ (see Lemma \ref{Rosenthal} and \ref{ordredegrandeursigmajTj}). The dependance  of  $\tilde\sigma^2_{j,\tilde\gamma}$ (\ref{sigmatilde}) in $\| m\|_{\infty}$ appears only in smaller order terms.
In conclusion, up to the knowledge of $\| m\|_{\infty}$  the procedure is completely data-driven.  Next section explains how to choose the constants $\gamma$ and $\tilde \gamma$. Our approach is non asymptotic and based on sharp concentration inequalities.  

\bigskip



 


\section{Rates of convergence} \label{secrates}
There exists $C_1>0$ such that for any $x\in [0,1]^d$, $f_X(x)\geq C_1$.

As we face a deconvolution problem, we need to define the assumptions made on the smoothness of the density of the errors covariates $g$. There exist positive constants $c_g$ and $C_g$ such that 
  \begin{equation}\label{hypobruit1}
  c_g  {(1+|t_\el|)^{-\nu }}  \leq  |\mathcal{F}(g_\el)(t_\el)| \leq C_g {(1+|t_\el|)^{-\nu }}, \quad 0 \leq \nu \leq  r-2, \quad t_l\in \R.
 \end{equation}
   
 We also require a condition for the derivative of the Fourier transform of $g$. There exists a positive constant $\mathcal{C}_g$ such that
     \begin{equation} \label{hypobruit2}
       |  {\mathcal{F}'(g_\el)(t_\el)}|  \leq  \mathcal{C}_g {(1+|t_\el|)^{-\nu-1}},  \quad t_l\in \R. 
     \end{equation}
Laplace and Gamma distributions satisfy the above assumptions (\ref{hypobruit1}) and (\ref{hypobruit2}). Assumptions (\ref{hypobruit1}) and (\ref{hypobruit2}) control the decay of the Fourier transform of  $g$ at a polynomial rate. Hence we deal with a midly ill-posed inverse problem. The index $\nu$ is usually known as the degree
of ill-posedness of the deconvolution problem at hand. \\

\subsection{Oracle inequality and rates of convergence for $p(\cdot)$}
First, we state an oracle inequality which highlights the bias-variance decomposition of the risk.
\begin{Th} \label{oracle}
 Let $ q\geq1 $ be fixed and let $ \hat j $ be the adaptive index defined as above. Then, it
holds for any $ \gamma>q(\nu+1) $ and $\tilde \gamma > 2q(\nu+2),$
$$
\E \left [ \left|\hat p_{\hat j}(x)-p(x)\right|^q \right ] \leq R_1 \left( \inf_{\eta} \E \left  [ \left\{B(\eta)+
\Gamma^*_{\gamma}(\eta)\right\}^q \right ]\right)+ o(n^{-q})  ,
$$
where 
$$
 B(\eta):=\max\left(\sup_{j'}\left|\E \left [\hat
p_{\eta\wedge j'}(x) \right ] -\E \left [ \hat p_{j'}(x)\right  ] \right | ,   \left |   \E [\hat p_{\eta}(x)]-p(x)\right|  \right )
$$  and $R_1$ a constant depending only on $q$.
\end{Th}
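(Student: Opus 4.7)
The plan is to follow the standard Goldenshluger--Lepski scheme while exploiting the concentration inequality alluded to in Proposition~\ref{Th:conc}. First, I would fix an arbitrary $\eta\in J$ and, by inserting $\hat p_{\hat j\wedge\eta}$ between $\hat p_{\hat j}$ and $\hat p_\eta$, use the definition of $\hat R_j$ twice. Taking $j=\eta$, $j'=\hat j$ yields
$$|\hat p_{\eta\wedge\hat j}(x)-\hat p_{\hat j}(x)|\le \hat R_\eta-\Gamma^*_\gamma(\eta)+\Gamma_\gamma(\hat j,\eta),$$
and swapping the roles of $\eta$ and $\hat j$ gives a symmetric bound. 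Since $\Gamma_\gamma(\hat j,\eta)\le\Gamma^*_\gamma(\hat j)$ and $\Gamma_\gamma(\eta,\hat j)\le\Gamma^*_\gamma(\eta)$, and since $\hat j$ minimizes $\hat R_j$ over $J$, adding the two inequalities produces
$$|\hat p_{\hat j}(x)-\hat p_\eta(x)|\le \hat R_{\hat j}+\hat R_\eta\le 2\hat R_\eta.$$
A further triangle inequality gives $|\hat p_{\hat j}(x)-p(x)|\le 2\hat R_\eta+|\hat p_\eta(x)-p(x)|$.

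The next step is to control $\hat R_\eta$ by $B(\eta)+\Gamma^*_\gamma(\eta)$ on a well-chosen large-probability event. I introduce
$$\mathcal{A}_n:=\Bigl\{\forall j\in J:\ |\hat p_j(x)-\E[\hat p_j(x)]|\le \Gamma_\gamma(j)\Bigr\},$$
whose complement has probability bounded by a sum of deviation probabilities over $j\in J$; by Proposition~\ref{Th:conc} together with the replacement of $\sigma_j^2$ by $\tilde\sigma^2_{j,\tilde\gamma}$ (which contributes smaller-order terms provided $\tilde\gamma>2q(\nu+2)$), each such probability decays polynomially, and the choice $\gamma>q(\nu+1)$ will make $|J|\cdot\P(\mathcal{A}_n^c)=o(n^{-2q})$ once the cardinality of $J$ is controlled via \eqref{setJ}. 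On $\mathcal{A}_n$,
$$|\hat p_{\eta\wedge j'}(x)-\hat p_{j'}(x)|\le \Gamma_\gamma(\eta\wedge j')+\Gamma_\gamma(j')+|\E\hat p_{\eta\wedge j'}(x)-\E\hat p_{j'}(x)|\le \Gamma_\gamma(j',\eta)+B(\eta),$$
so $\{|\hat p_{\eta\wedge j'}(x)-\hat p_{j'}(x)|-\Gamma_\gamma(j',\eta)\}_+\le B(\eta)$, and hence $\hat R_\eta\le B(\eta)+\Gamma^*_\gamma(\eta)$. Similarly $|\hat p_\eta(x)-p(x)|\le \Gamma_\gamma(\eta)+B(\eta)\le \Gamma^*_\gamma(\eta)+B(\eta)$, yielding on $\mathcal{A}_n$ a bound of the form $|\hat p_{\hat j}(x)-p(x)|\le 3\bigl(B(\eta)+\Gamma^*_\gamma(\eta)\bigr)$. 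Raising to the $q$-th power and taking expectation gives the main term of the oracle inequality, with the infimum over $\eta$ coming for free since $\eta$ was arbitrary.

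It remains to absorb the contribution on $\mathcal{A}_n^c$ into the $o(n^{-q})$ remainder. I would use Cauchy--Schwarz to write
$$\E\bigl[|\hat p_{\hat j}(x)-p(x)|^q\1_{\mathcal{A}_n^c}\bigr]\le \bigl(\E|\hat p_{\hat j}(x)-p(x)|^{2q}\bigr)^{1/2}\P(\mathcal{A}_n^c)^{1/2},$$
and bound the moment crudely via $\|T_j\|_\infty\le C 2^{\Sj(\nu+1/2)}$ (which under \eqref{hypobruit1} follows from the standard bound on the deconvolution operator) and $|Y_u|$ having Gaussian tails, together with $2^{\Sj}\le n$ for $j\in J$. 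Combined with the polynomial decay of $\P(\mathcal{A}_n^c)$, the constraint $\gamma>q(\nu+1)$ is precisely what makes this remainder negligible.

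The main obstacle will be step two: verifying rigorously that the random penalty $\tilde\sigma^2_{j,\tilde\gamma}$ really does dominate $\sigma_j^2$ on a uniform event, so that the deterministic concentration bound giving $|\hat p_j(x)-\E\hat p_j(x)|\le \Gamma_\gamma(j)$ holds simultaneously for all $j\in J$ and all $j\wedge j'$. This requires both a Bernstein-type inequality (covered by Proposition~\ref{Th:conc}) and a separate control of the $U$-statistic $\hat\sigma^2_j$ around $\sigma_j^2$, which is where the specific exponent $\tilde\gamma>2q(\nu+2)$ enters.
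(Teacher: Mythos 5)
Your first, deterministic part of the argument is sound and is essentially the paper's: the insertion of $\hat p_{\hat j\wedge\eta}$, the use of $\hat R_{\hat j}\le\hat R_\eta$, and the bound $\hat R_\eta\le B(\eta)+\Gamma^*_\gamma(\eta)$ once the centered deviations $|\hat p_j(x)-\E[\hat p_j(x)]|$ are dominated by $\Gamma_\gamma(j)$ reproduce the chain of inequalities in the paper's proof (note also that $\eta\wedge j'\in J$ whenever $\eta\in J$, so the indices you need are covered). The genuine gap is in your treatment of the remainder. You handle all the $\gamma$-deviations through the global event $\mathcal{A}_n$ and then absorb the complement by Cauchy--Schwarz with the crude bound $|\hat p_j(x)|\le \max_u|Y_u|\,\|T_j\|_\infty\le C\max_u|Y_u|\,2^{\Sj(\nu+1)}\le C\max_u|Y_u|\,n^{\nu+1}$. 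This gives a remainder of order $n^{(\nu+1)q}\,\P(\mathcal{A}_n^c)^{1/2}\approx n^{(\nu+1)q-\gamma/2}$ up to logarithms, which is $o(n^{-q})$ only if $\gamma>2q(\nu+2)$ (even replacing Cauchy--Schwarz by H\"older with exponents close to $1$ you still need $\gamma>q(\nu+2)$). The theorem, however, is claimed under the weaker condition $\gamma>q(\nu+1)$, so your final sentence ("the constraint $\gamma>q(\nu+1)$ is precisely what makes this remainder negligible") is not justified by your argument; for instance with $\nu=0$, $q=1$, $\gamma=1.1$ your bound on the bad-event contribution diverges.

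The paper avoids this loss by never introducing a good event for the $\gamma$-deviations at all. It keeps the excess terms $\sup_{j\in J}\bigl\{|\hat p_j(x)-p_j(x)|-\Gamma_\gamma(j)\bigr\}_+^q$ inside the expectation and bounds them in Proposition \ref{BorneMajorant} by integrating the Bernstein tail of Proposition \ref{Th:conc} in the deviation variable $u$: this pays only $e^{-\gamma\log n}$ times the \emph{typical} size $\bigl(\sigma_j^2/n\bigr)^{q/2}\vee (c_j/n)^q\approx n^{q\nu}$ (up to logs), not $n^{-\gamma/2}$ times the worst-case magnitude $n^{(\nu+1)q}$, and this is exactly what makes $\gamma>q(\nu+1)$ sufficient. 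The crude event-plus-Cauchy--Schwarz device is used in the paper only where it is unavoidable, namely on $\tilde\Omega^c_{\tilde\gamma}$ to compare $\sigma_j^2$ with the random penalty $\tilde\sigma^2_{j,\tilde\gamma}$, and that is precisely why the stronger threshold $2q(\nu+2)$ appears for $\tilde\gamma$ but not for $\gamma$. To repair your proof under the stated hypotheses you would have to replace your step three by such an integrated-tail (or peeling) bound on $\E\bigl[\sup_{j\in J}\{|\hat p_j(x)-p_j(x)|-\Gamma_\gamma(j)\}_+^q\,\1_{\tilde\Omega_{\tilde\gamma}}\bigr]$, reserving the event splitting for the penalty comparison alone.
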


The oracle inequality in Theorem \ref{oracle} illustrates a bias-variance decomposition of the risk. The term $B(\eta)$ is a bias term. Indeed, one recognizes on the right side the classical bias term $$ \left |   \E [\hat p_{\eta}(x)]-p(x)\right| =  |p_{\eta}(x)-p(x) | . $$ Concerning  $  \left|\E \left [\hat p_{\eta\wedge j'}(x) \right ] -\E \left [ \hat p_{j'}(x)\right  ] \right | $, for sake of clarity let us consider for instance the univariate case : if $j'\leq \eta$ this term is equal to zero. If $j' \geq \eta $, it turns to be 
$$ |\E \left [\hat p_{\eta}(x) \right ] -\E \left [ \hat p_{j'}(x)\right  ] |= |p_\eta(x)-p_{j'}(x)| \leq | p_\eta(x)-p(x)| + | p_{j'}(x)-p(x)| .$$ As we have the following inclusion for the projection spaces $ V_\eta \subset V_{j'}  $, the term  $p_{j'}$ is closer to $p$ than $p_\eta$ for the $L_2$-distance. Hence we expect a good control of $  |p_{j'}(x)-p(x)|$ with respect to  $| p_\eta(x)-p(x)| $.
  
We study the rates of convergence of the estimators over anisotropic H\"older Classes. Let us define them.
 \begin{definition}[Anisotropic H\"{o}lder Space]\label{def_holder_Anisot}
Let $ \vec{\beta}=(\beta_1,\beta_2,\ldots,\beta_d)\in(\R_+^*)^d $ and $ L>0 $.
 We say that $f: [0,1]^d\rightarrow\R $ belongs to the anisotropic H\"{o}lder class
$\mathbb{H}_d(\vec{\beta},L)$ of functions if $f$ is bounded and for any $ \el =1,...,d $ and for all $
z\in\R $
$$
\sup_{x\in[0,1]^d}\left|\frac{\partial^{\lfloor \beta_l\rfloor }f}{\partial x_\el ^{\lfloor
\beta_\el \rfloor}}(x_1,\ldots,x_\el +z,\ldots,x_d)-\frac{\partial^{\lfloor \beta_\el \rfloor }f}{\partial x_\el ^{\lfloor
\beta_\el \rfloor}}(x_1,\ldots,x_\el ,\ldots,x_d)\right|\leq
L|z|^{\beta_\el -\lfloor \beta_\el \rfloor}.
$$
\end{definition}

The following theorem  gives the rate of convergence of the estimator $\hat p_{\hat j}(x)$ and justifies the optimality of our oracle inequality. 

\begin{Th}\label{th adaptive minimax p}
Let $ q\geq1 $ be fixed  and let $ \hat j $ be the adaptive index defined in (\ref{hatj}). 
Then, for any $
\vec\beta\in(0,1]^d $ and $L>0$, it holds
$$
\sup_{p\in\mathbb{H}_d(\vec{\beta},L)}\E\left|\hat p_{\hat j}(x)-p(x)\right|^q\leq  L^{\frac{q(2\nu+1)}{2 \bar \beta +2\nu +1}}  R_2
\left(\frac{\log(n)}{n}\right)^{q\bar\beta/(2\bar\beta+2\nu+1)}
,$$
with $\bar\beta= \frac{1}{\frac{1}{\beta_1}+\dots+ \frac{1}{\beta_d}}$ and $R_2$ a constant depending on 
$\gamma,q,\eps, \tilde \gamma, \| m\|_{\infty}, s,  \| f_X\|_{\infty},\varphi, c_g, \mathcal{C}_g, \vec{\beta}$. 
\end{Th}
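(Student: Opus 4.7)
The plan is to apply the oracle inequality of Theorem~\ref{oracle} at a single, well-chosen resolution $\eta^\ast \in J$ balancing the bias and penalty contributions; since the oracle bound is an infimum over $\eta$, producing one good $\eta^\ast$ suffices. I would bound $B(\eta^\ast) + \Gamma^\ast_{\gamma}(\eta^\ast)$ deterministically by the announced rate, from which the $q$-th moment estimate follows immediately.

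First I would control the bias $B(\eta)$. Since $\E[\hat p_j(x)] = p_j(x) = K_j(p)(x)$ by Lemma~\ref{SansBiais}, the two pieces of $B(\eta)$ are deterministic anisotropic wavelet approximation errors. Using the polynomial reproduction property (A2) and a coordinatewise Taylor expansion, for $\vec\beta \in (0,1]^d$ one gets the classical bound $|K_j(p)(x) - p(x)| \leq c_\varphi\, L \sum_{\el=1}^d 2^{-j_\el \beta_\el}$, handling the $|\E[\hat p_\eta(x)] - p(x)|$ piece directly. The delicate point is the supremum piece: the naive triangle inequality $|K_{\eta \wedge j'} p - K_{j'} p| \leq |K_{\eta \wedge j'} p - p| + |K_{j'} p - p|$ fails, since for $j'$ with $j'_\el \ll \eta_\el$ in some coordinate the right-hand side picks up an almost constant contribution. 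I would instead use the MRA inclusion $V_{\eta \wedge j'} \subset V_{j'}$ and decompose $K_{j'} - K_{\eta \wedge j'}$ into tensor-product detail projections: only coordinates with $j'_\el > \eta_\el$ contribute, and each such contribution is a geometric sum of detail magnitudes bounded via anisotropic H\"older regularity by $\sum_{m = \eta_\el}^{j'_\el - 1} 2^{-m \beta_\el} \lesssim 2^{-\eta_\el \beta_\el}$. This yields $B(\eta) \leq C\, L \sum_{\el} 2^{-\eta_\el \beta_\el}$.

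Next I would control the penalty. Because the quantities $\|T_j\|_\infty$, $\hat\sigma_j$, $C_j$ and $c_j$ entering $\Gamma_{\gamma}(j)$ are coordinatewise non-decreasing in $j$, one has $\Gamma_{\gamma}(\eta \wedge j') \leq \Gamma_{\gamma}(\eta)$ for all $j'$ and hence $\Gamma^\ast_{\gamma}(\eta) \leq 2\, \Gamma_{\gamma}(\eta)$. Assumption (\ref{hypobruit1}) on the ill-posedness combined with the compact support of $\varphi$ from (A1) gives (as collected in Lemmas~\ref{Rosenthal} and \ref{ordredegrandeursigmajTj}) the orders $\|T_j\|_\infty \lesssim 2^{S_j(\nu + 1/2)}$ and $\sigma_j^2 \lesssim 2^{S_j(2\nu+1)}$. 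Substituting into (\ref{sigmatilde}) and into $\Gamma_{\gamma}(\eta)$ yields $\Gamma^\ast_{\gamma}(\eta)^2 \lesssim (\log n / n)\, 2^{S_\eta (2\nu+1)}$, up to strictly smaller order terms.

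Finally I would balance the two bounds and conclude. Choose $\eta^\ast \in \N^d$ such that the products $\eta^\ast_\el \beta_\el$ are (nearly) equal across $\el$; then $S_{\eta^\ast} = \eta^\ast_1 \beta_1 / \bar\beta$ and the bias is $\asymp L\, 2^{-\eta^\ast_1 \beta_1}$. Equating squared bias with variance gives $2^{\eta^\ast_1 \beta_1} \asymp L^{2\bar\beta/(2\bar\beta + 2\nu + 1)} (n/\log n)^{\bar\beta/(2\bar\beta + 2\nu + 1)}$, after which $B(\eta^\ast) + \Gamma^\ast_{\gamma}(\eta^\ast) \leq C\, L^{(2\nu+1)/(2\bar\beta + 2\nu + 1)} (\log n / n)^{\bar\beta / (2\bar\beta + 2\nu + 1)}$; the constraint $\eta^\ast \in J$ holds since $2^{S_{\eta^\ast}} \asymp (n/\log n)^{1/(2\bar\beta + 2\nu + 1)} \ll n / \log^2 n$. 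Raising to the $q$-th power and absorbing the $o(n^{-q})$ remainder of Theorem~\ref{oracle} produces the stated rate. The main technical obstacle is the uniform bias control on the $\sup_{j'}$ piece of $B(\eta)$: one must carefully exploit the tensor-product wavelet detail decomposition to avoid the degradation that a plain triangle inequality would cause.
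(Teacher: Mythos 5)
Your overall route is the paper's route: apply Theorem \ref{oracle} at a well-chosen $\eta^\ast$, bound the bias by $CL\sum_\el 2^{-\eta_\el\beta_\el}$, bound the penalty moments via Lemmas \ref{Rosenthal} and \ref{ordredegrandeursigmajTj} by $C\bigl(2^{S_\eta(2\nu+1)}\log n/n\bigr)^{q/2}$, and balance. Where you genuinely diverge is the supremum piece of $B(\eta)$: you are right that the plain triangle inequality fails for $j'$ with small coordinates, and your telescoping tensor-product detail decomposition (only coordinates with $j'_\el>\eta_\el$ contribute, each a geometric sum $\sum_{m\geq\eta_\el}2^{-m\beta_\el}\lesssim 2^{-\eta_\el\beta_\el}$) is a valid fix. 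The paper's Proposition \ref{PropBiais} gets the same bound more cheaply: by Lemma \ref{projection}, $p_{\eta\wedge j'}=K_{j'}(p_\eta)$, hence $p_{\eta\wedge j'}(x)-p_{j'}(x)=\int K_{j'}(x,y)\,(p_\eta(y)-p(y))\,dy$, and one concludes with the uniform bias bound of Lemma \ref{BiasGap:Gene} together with $\int|K_{j'}(x,y)|dy\leq C$. Your argument requires a coordinatewise one-dimensional bias lemma applied uniformly in the remaining variables plus $L^\infty$-boundedness of the partial projections, so it is essentially the same ingredients rearranged; the paper's identity avoids the telescoping altogether.

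Two steps in your penalty control need repair, though neither changes the rate. First, you deduce $\Gamma^\ast_{\gamma}(\eta)\leq 2\Gamma_{\gamma}(\eta)$ from claimed coordinatewise monotonicity of $\|T_j\|_\infty$, $\hat\sigma_j$, $C_j$, $c_j$ in $j$. This is not established anywhere and is dubious: $\hat\sigma_j^2$ is a random empirical quantity and $T_j$ involves the deconvolution operator, so there is no pathwise monotonicity to invoke. What is true (and suffices) is that the \emph{bounds} are monotone in $S_j$: since $\Gamma^\ast_{\gamma}(\eta)=\Gamma_{\gamma}(\eta)+\sup_{j'}\Gamma_{\gamma}(\eta\wedge j')$ and $\{\eta\wedge j':j'\}\subset\{j\leq\eta\}$, one can bound $\E\bigl[\sup_{j\leq\eta}\Gamma_{\gamma}(j)^q\bigr]\leq\sum_{j\leq\eta}\E[\Gamma_{\gamma}(j)^q]\leq C(\log n/n)^{q/2}\prod_\el\sum_{j_\el\leq\eta_\el}2^{j_\el(2\nu+1)q/2}\leq C(\log n/n)^{q/2}2^{S_\eta(2\nu+1)q/2}$, the geometric sums being dominated by their top terms; this is the moment bound actually needed (and, relatedly, $\Gamma^\ast_\gamma$ is random, so the control must be an $L^q$-moment bound via Lemma \ref{Rosenthal}, not a ``deterministic'' bound). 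Second, the order of $\|T_j\|_\infty$ is $2^{S_j(\nu+1)}$ (Lemma \ref{ordredegrandeursigmajTj}), not $2^{S_j(\nu+1/2)}$; with the correct exponent the terms involving $C_j$ and $c_j$ are still of lower order precisely because $2^{S_j}\leq n/\log^2 n$ on $J$, which is where the definition (\ref{setJ}) is used. With these corrections your balancing step, the verification $\eta^\ast\in J$, and the resulting exponent in $L$ and in $\log n/n$ all match the paper's conclusion.
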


\begin{Remark}\label{rate_optimal}
 The estimate $ \hat p $ achieves the optimal rate of convergence up to a logarithmic term (see section 3.3 in  \cite{ComteLacour}). This logarithmic loss, due to adaptation, is known to be nevertheless unavoidable for d = 1 and one
can conjecture that it is also the case for higher dimension (see Remark 1 in \cite{ComteLacour}) .  

\end{Remark}

\subsection{Rates of convergence  for $m(\cdot)$}

As mentioned above, the estimation of $ m $ requires an adaptive estimate of $ f_X $. This is due to kernel estimators, e.g. projection estimators do not need the additional estimate (see \cite{BLR}). For this purpose, we use an estimate introduced by \cite{ComteLacour} (Section 3.4) denoted by $ \hat f_X $. This estimate is constructed from a deconvolution kernel and the bandwidth is selected via a method described in \cite{GL}. We will not give the explicit expression of $\hat f_X$ for ease of exposition. Then, we define the estimate of $m$ for all $x$ in $[0,1]^d$ :
\begin{equation}\label{estimateurdem}
\hat m(x)=\frac{\hat{p}_{\hat j}(x)}{ \hat f_X(x)\vee n^{-1/2}}.
\end{equation}
The term $n^{-1/2}$ is added to avoid the drawback when $\hat f_X$ is closed to $0$.

\begin{Th}\label{th adaptive minimax m} Let $ q\geq1 $ be fixed and let $ \hat m $ defined as above. Then, for any $
\vec\beta\in(0,1]^d $ and $L>0$, it holds
$$
\sup_{m \in\mathbb{H}_d(\vec{\beta},L)}\E\left|\hat m(x)-m(x)\right|^q\leq    L^{\frac{q(2\nu+1)}{2 \bar \beta +2\nu +1}} R_3
\left(\frac{\log(n)}{n}\right)^{q\bar\beta/(2\bar\beta+2\nu+1)},
$$
with $R_3$ a constant depending on 
$\gamma,q,\eps, \tilde \gamma, \| m\|_{\infty}, s,  \| f_X\|_{\infty},\varphi, c_g, \mathcal{C}_g, \vec{\beta}$.
\end{Th}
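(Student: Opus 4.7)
The plan is to start from the multiplicative identity $p(x) = m(x) f_X(x)$ and write
\[
\hat m(x) - m(x) = \frac{\hat p_{\hat j}(x) - p(x)}{\hat f_X(x) \vee n^{-1/2}} + m(x)\,\frac{f_X(x) - (\hat f_X(x) \vee n^{-1/2})}{\hat f_X(x) \vee n^{-1/2}},
\]
which reduces the problem to controlling the numerator terms and the denominator separately. Because $f_X(x) \geq C_1 > 0$ on $[0,1]^d$, I would split the expectation on the event $A = \{|\hat f_X(x) - f_X(x)| \leq C_1/2\}$ and its complement $A^c$.

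On $A$, for $n$ large enough one has $\hat f_X(x) \vee n^{-1/2} = \hat f_X(x) \geq C_1/2$, and consequently
\[
\E\left[|\hat m(x) - m(x)|^q \mathbf{1}_A\right] \leq \frac{C_q}{C_1^q}\Big(\E|\hat p_{\hat j}(x) - p(x)|^q + \|m\|_\infty^q\,\E|\hat f_X(x) - f_X(x)|^q\Big).
\]
The first summand is bounded by invoking Theorem \ref{th adaptive minimax p}, after observing that $m \in \mathbb{H}_d(\vec\beta, L)$ together with the boundedness and (implicit) smoothness of $f_X$ implies $p = m f_X \in \mathbb{H}_d(\vec\beta, L')$ with $L'$ proportional to $L$. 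The second summand is bounded by importing the adaptive minimax upper bound satisfied by the Comte--Lacour deconvolution estimator $\hat f_X$ (Section 3.4 of \cite{ComteLacour}), which delivers exactly the same rate $(\log n/n)^{q\bar\beta/(2\bar\beta+2\nu+1)}$ on the corresponding anisotropic H\"older class.

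On $A^c$, I would use Cauchy--Schwarz:
\[
\E\left[|\hat m(x) - m(x)|^q \mathbf{1}_{A^c}\right] \leq \left(\E|\hat m(x) - m(x)|^{2q}\right)^{1/2} P(A^c)^{1/2}.
\]
The safety net $n^{-1/2}$ in the denominator gives $|\hat m(x)| \leq \sqrt{n}\,|\hat p_{\hat j}(x)|$, and a crude polynomial bound on $\E|\hat p_{\hat j}(x)|^{2q}$ follows from the explicit expression $\hat p_j(x) = n^{-1}\sum_u Y_u T_j(W_u)$, from the polynomial control of $\|T_j\|_\infty$ in $2^{\Sj}$, and from the restriction $2^{\Sj} \leq n/\log^2 n$ for $j \in J$. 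Markov's inequality applied to a high-order moment bound on $|\hat f_X(x) - f_X(x)|$ then yields $P(A^c) = O(n^{-K})$ for any prescribed $K$, which absorbs the polynomial factor and renders this contribution negligible compared with the main rate.

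The main technical obstacle is twofold: \emph{(i)} obtaining from \cite{ComteLacour} an adaptive moment bound for $\hat f_X$ of sufficiently high order to simultaneously deliver the correct rate on $A$ and control $P(A^c)$ through Markov; and \emph{(ii)} reconciling the H\"older hypothesis on $m$ with a compatible regularity assumption on $f_X$ so that both $p = m f_X$ and $f_X$ can be estimated at the advertised rate. Once these ingredients are in place, combining the $A$ and $A^c$ bounds gives the theorem up to a multiplicative constant with the claimed dependence on $L$, $\|m\|_\infty$, $\|f_X\|_\infty$, and the noise parameters.
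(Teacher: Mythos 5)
Your decomposition, event splitting and use of Theorem \ref{th adaptive minimax p} are essentially the paper's own proof: the paper writes $|\hat m(x)-m(x)|\leq \mathcal A_1+\|m\|_\infty\|f_X\|_\infty\mathcal A_2$ with exactly your two terms, and splits on the events $E_1=\{|\hat f_X(x)-f_X(x)|\geq C\phi_n(\vec\beta)\}$ and $E_2=\{\hat f_X(x)<C_1/2\}$, using the $n^{-1/2}$ floor and Cauchy--Schwarz to make the bad events negligible. The one place where you diverge is your ``obstacle (i)'': you plan to import high-order \emph{moment} bounds for $\hat f_X-f_X$ and then use Markov to get $\P(A^c)=O(n^{-K})$, whereas the paper needs no moment bound at all. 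It takes from \cite{ComteLacour} only the deviation inequality $\P\left(|\hat f_X(x)-f_X(x)|\geq C\phi_n(\vec\beta)\right)\leq n^{-2q}$ (with the construction of $\hat f_X$ tuned to $q$) together with the deterministic bound $\P\left(|\hat f_X(x)-f_X(x)|\leq Cn\right)=1$; the probability $n^{-2q}$ is enough to absorb the polynomial factors $n^{q/2}$ and $n^{3q/2}$ coming from Cauchy--Schwarz and the $n^{-1/2}$ floor, so no arbitrarily high $K$ is required. Your route would work too, but only if such adaptive moment bounds are actually available from \cite{ComteLacour}, which is a stronger input than what is cited; the probability-plus-sup-bound formulation is the cleaner way to close the argument. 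As for your ``obstacle (ii)'' (transferring H\"older regularity from $m$ to $p=mf_X$ so that Theorem \ref{th adaptive minimax p} applies), this is not a defect of your proposal specifically: the paper's proof invokes Theorem \ref{th adaptive minimax p} in exactly the same implicit way, so flagging it is fair rather than a gap you have introduced.
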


 The estimate $ \hat m $ is again optimal up to a logarithmic term (see Remark \ref{rate_optimal}).

\bigskip

\section{Numerical results }\label{secsimu}
In this section, we implement some simulations to illustrate the theoretical results. We aim at estimating the Doppler regression function $m$ at two points $x_0= 0.25$ and $ x_0=0.90 $  (see Figure 1).  We have  $n=1024$ observations and the regression errors $\varepsilon_l$'s follow a standard normal density with variance $s^2=0.15^2$. As for the design density of the $X_l$'s, we consider the Beta density and the uniform density on $[0,1]$. The uniform distribution is quite classical  in regression with random design. The $Beta(2,2)$ and $Beta(0.5,2)$ distributions reflect two very different behaviors on $[0,1]$.  Indeed, we recall that the Beta density with parameters $(a,b)$ (denoted here by  ${Beta(a,b)}$) is proportional to $x^{a-1}(1-x)^{b-1}\mathds{1}_{[0,1]}(x).$ In Figure 2, we plot the noisy regression Doppler function according to the three design scenario. For the covariate errors $\delta_i$'s, we focus on the centered Laplace density with scale parameter $\sigma_{g_L}>0$ that we denote $g_L$. This latter has the following expression :
$$g_L(x)= \frac{1}{2\sigma_{g_L}} e^{-\frac{|x|}{\sigma_{g_L}}}.$$ The choice of the centered Laplace noise is motivated by the fact that the Fourier transform of $g_L$ is given by  
$$ \mathcal{F}(g_L)(t)=\frac{1}{1+ \sigma_{g_L}^2t^2},$$
and according to assumption (\ref{hypobruit1}), it gives an example of an ordinary smooth noise with degree of ill-posedness $\nu=2$. 
Furthermore, when facing regression problems with errors in the design, it is common to compute the so-called reliability ratio (see \cite{Fan93}) which is given by
$$ R_r := \frac{\var(X)}{\var(X)+2\sigma_{g_L}^2}. $$
 $R_r$ permits to assess the amount of noise in the covariates. The closer to $0$ $R_r$ is, the bigger the amount of noise in the covariates is and the more difficult the deconvolution step will be.  For instance, \cite{Fan93} chose $R_r=0.70$. We computed the reliability ratio in Table \ref{reliability} for the considered simulations. 
  
\begin{table}[!h]
\begin{center}
\begin{minipage}[t]{.49\linewidth}
\begin{tabular}{c|c c c }
 $\sigma_{g_L}$ &   \multicolumn{3}{c}{design of the $X_i$} \\    & $\mathcal{U}[0,1]$  &  $Beta(2,2)$  &  $Beta(0.5,2)$    \\ 
 \hline
 0.075 & 0.88  &0.81    & 0.80     \\
0.10  &    0.80 & 0.71   &   0.69  \\ 
\end{tabular}\\
\end{minipage}
\end{center}
\caption{Reliability ratio.}
\label{reliability}
\end{table}

 We recall that our estimator of $m(x)$  is given by the ratio of two estimators (see (\ref{estimateurdem})) :

 \begin{equation}
\hat m(x)=\frac{\hat{p}_{\hat j}(x)}{ \hat f_X(x)\vee n^{-1/2}}.
\end{equation}
First, we compute $\hat{p}_{\hat j }(x)$ an estimator of $p(x)=m(x)\times f_X(x)$ which is denoted "GL" in the graphics below. We use coiflet wavelets of order $5$. Then we divide $\hat{p}_{\hat j }(x)$ by the adaptive deconvolution density estimator $\hat f_X(x)$ of \cite{ComteLacour}. This latter is constructed with a deconvolution kernel and an adaptive bandwidth. For the selection of the coiflet level $\hat j$ in $\hat{p}_{\hat j }(x)$,  we advise to use  $\hat\sigma_j^2$ instead of  $\tilde\sigma_{j,\tilde\gamma}^2$  and $\frac{2\max_i|Y_i|\|T_j\|_\infty}{3}$ instead of $c_j$. It remains to settle the value of the constant $\gamma$. To do so, we compute the pointwise risk of $\hat{p}_{\hat j}(x)$ in function of $\gamma$: Figure 3 shows a clear "dimension jump" and accordingly the value $\gamma=0.5$ turns to be reasonable. Hence we fix $\gamma=0.5$ for all simulations and our selection rule is completely data-driven.  \\

 \begin{figure}[!ht]\begin{center}
\begin{tabular}{ccc}
\includegraphics[scale=0.2]{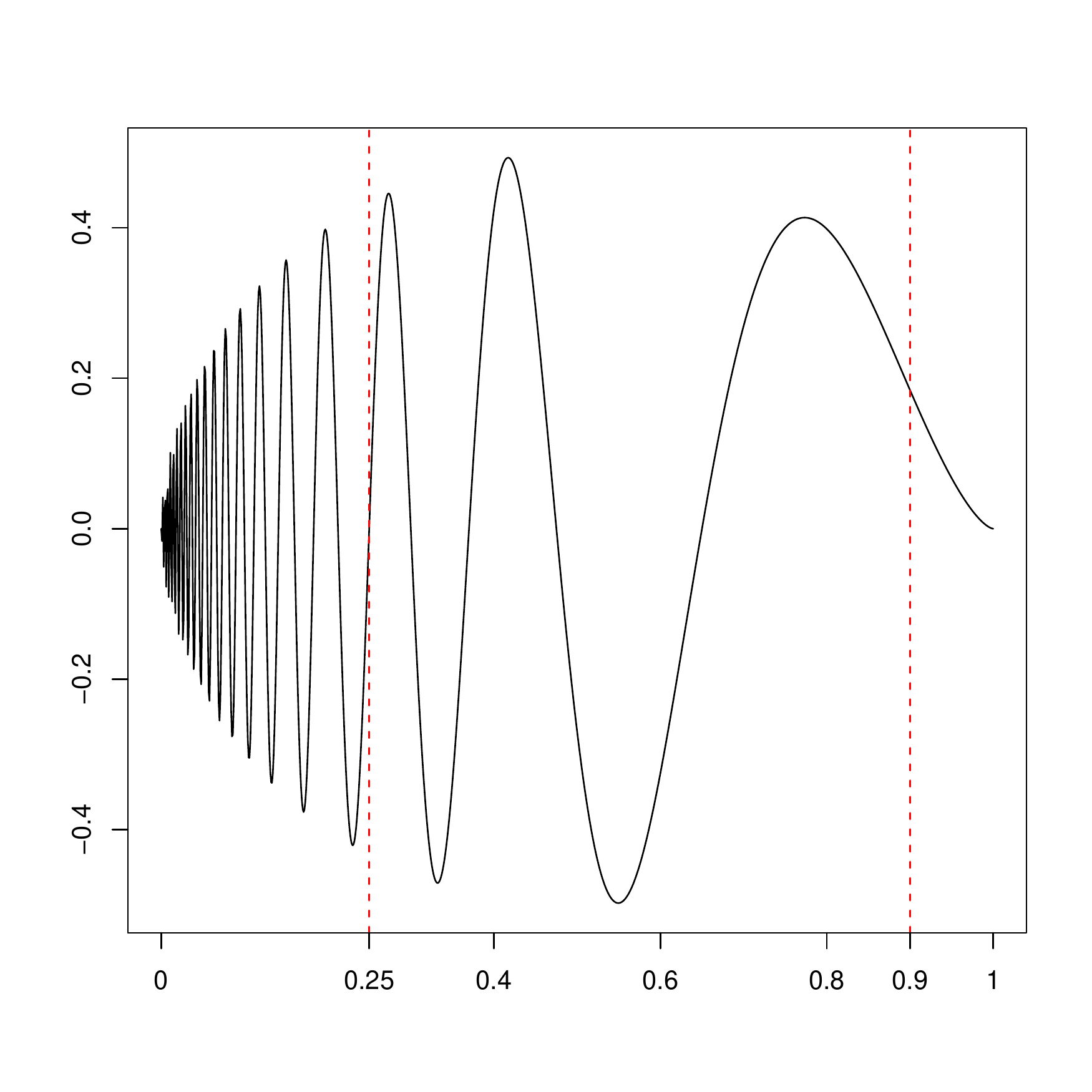} &
\includegraphics[scale=0.2]{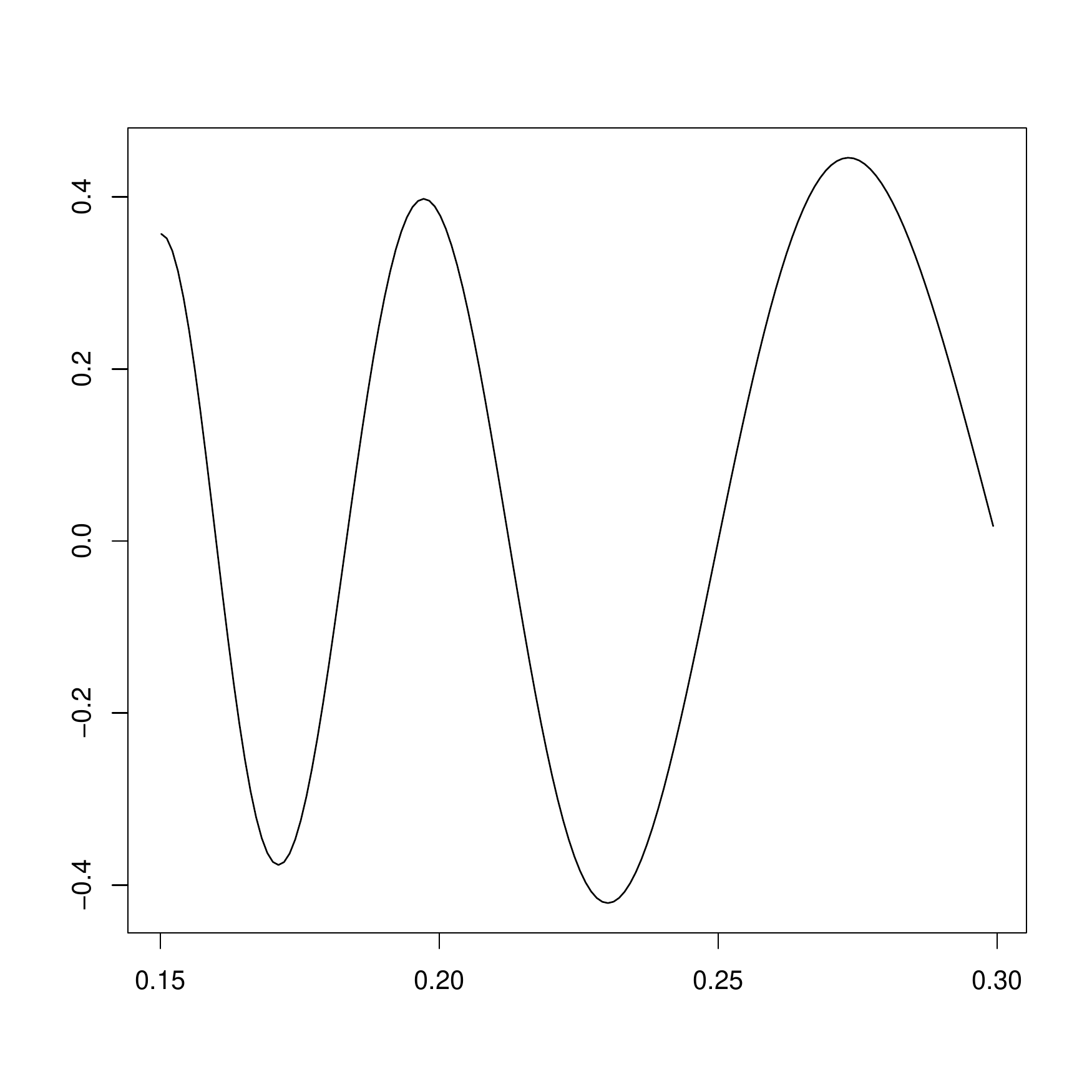} &
\includegraphics[scale=0.2]{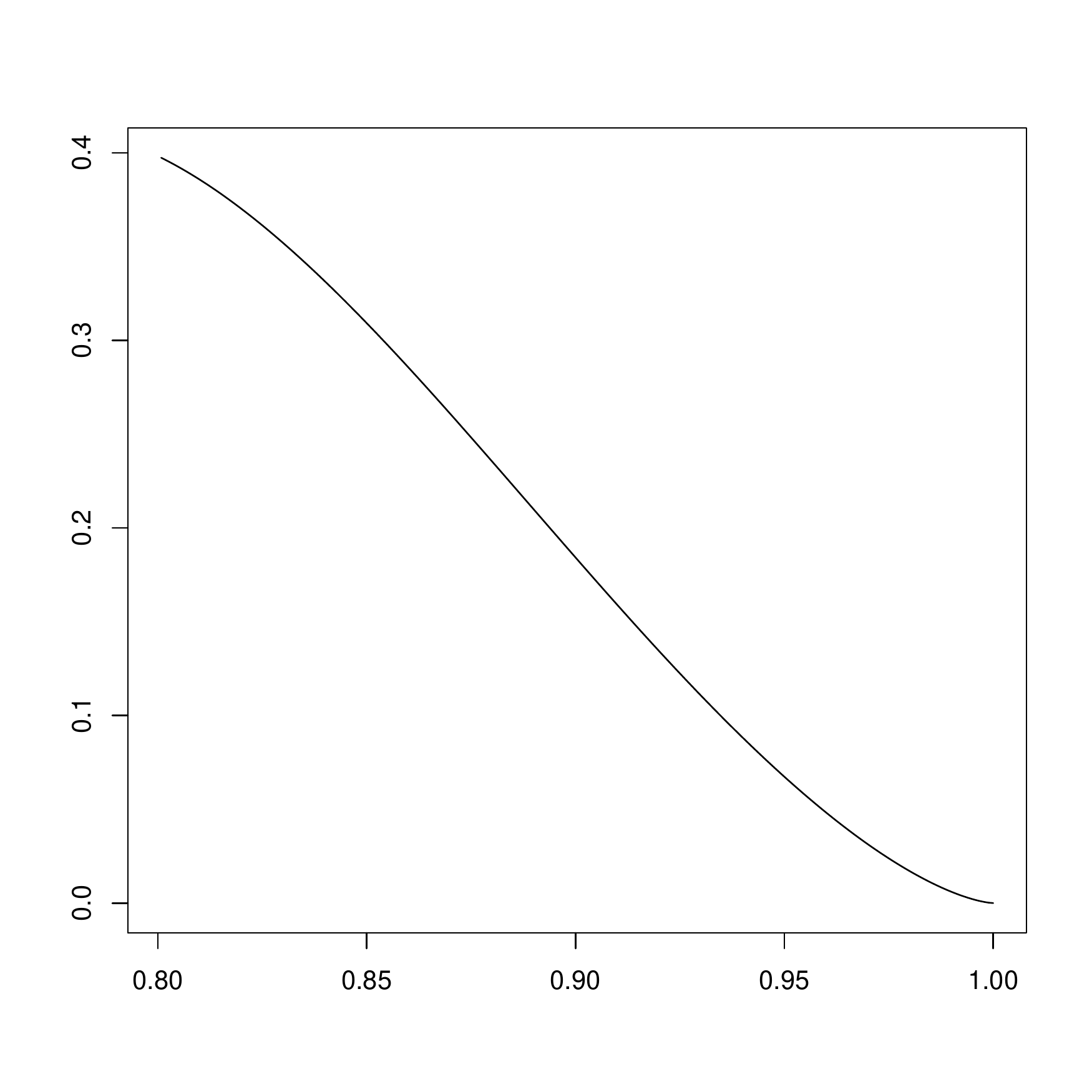} \\
(a) & (b) & (c)
\end{tabular}
\caption{a/ Representation of Doppler function. b/ A zoom of Doppler function on $[0.15, 0.30]$. c/  A zoom of Doppler function on $[0.80,1]$.}
\label{doppler}
\end{center}\end{figure}

 \begin{figure}[!ht]\begin{center}
\begin{tabular}{ccc}
\includegraphics[scale=0.2]{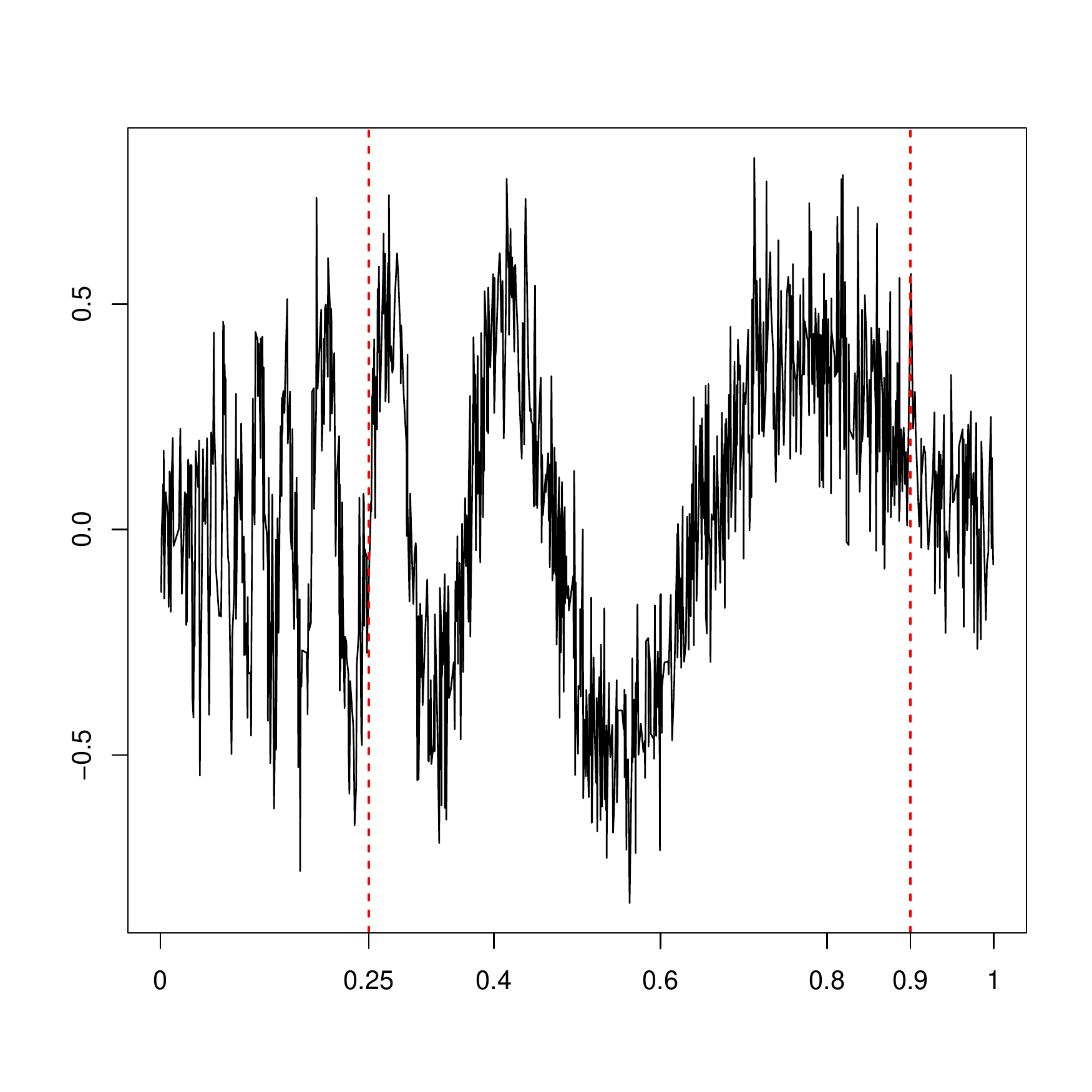} &
\includegraphics[scale=0.2]{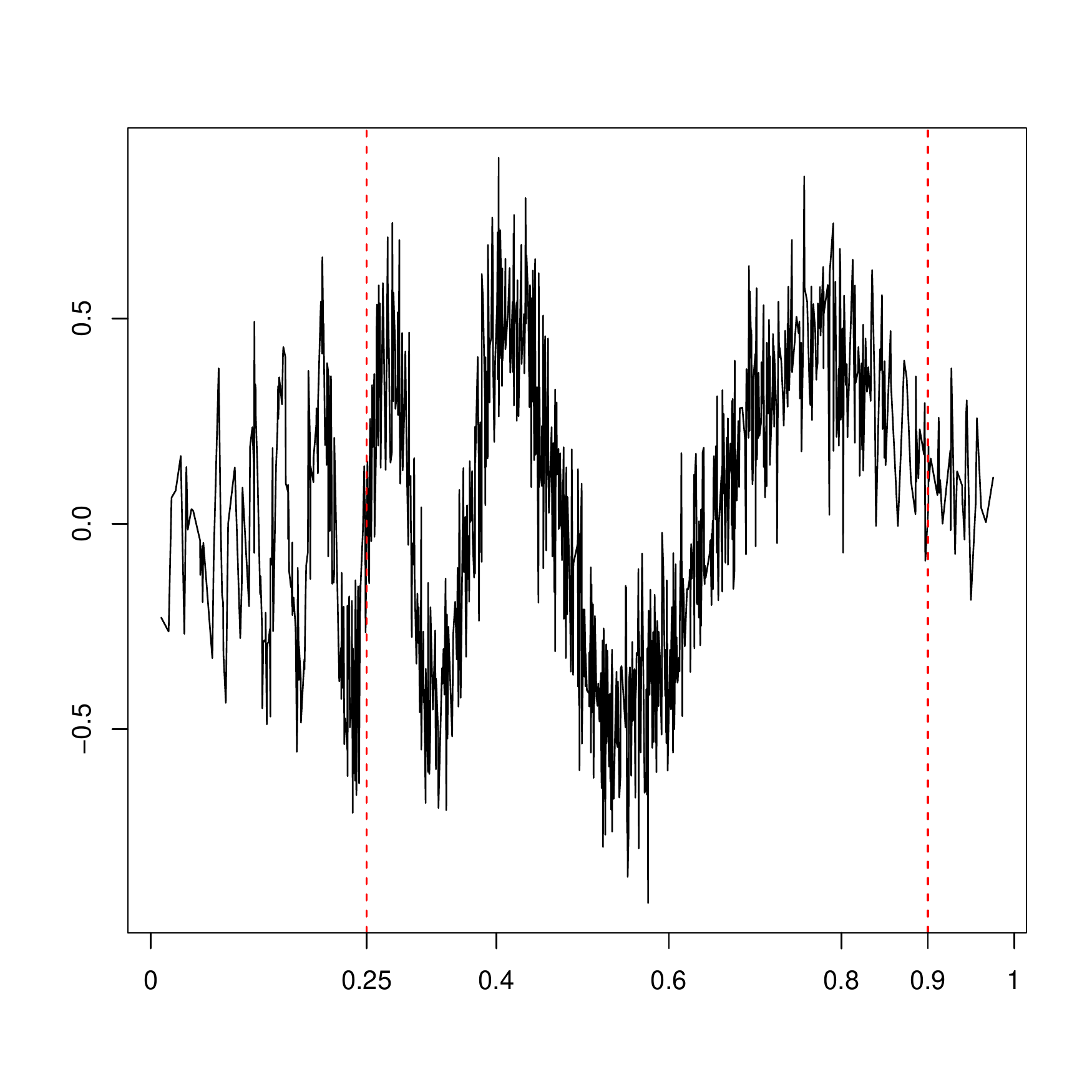}&
\includegraphics[scale=0.2]{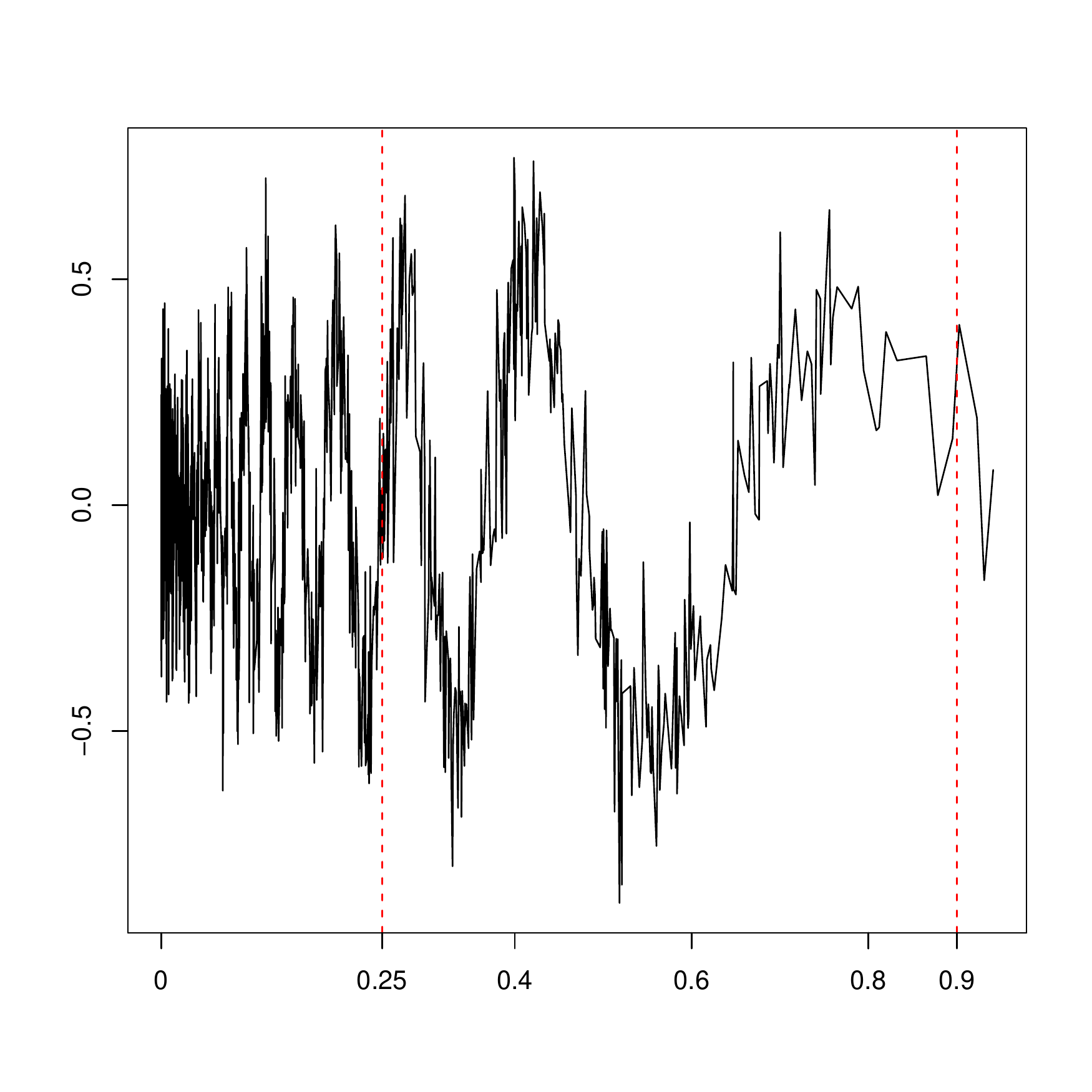} 
\\
(a) & (b) & (c)
\end{tabular}
\caption{a/ Noisy Doppler  with $ X_i \sim \mathcal{U}[0,1]$. b/ Noisy Doppler  with $X_i \sim {Beta(2,2)}$. c/ Noisy Doppler function with $X_i \sim {Beta(0.5,2)}$. }
\label{dopplerbruit}
\end{center}\end{figure}

 \begin{figure}[!ht]\begin{center}
\begin{tabular}{c}
\includegraphics[scale=0.25]{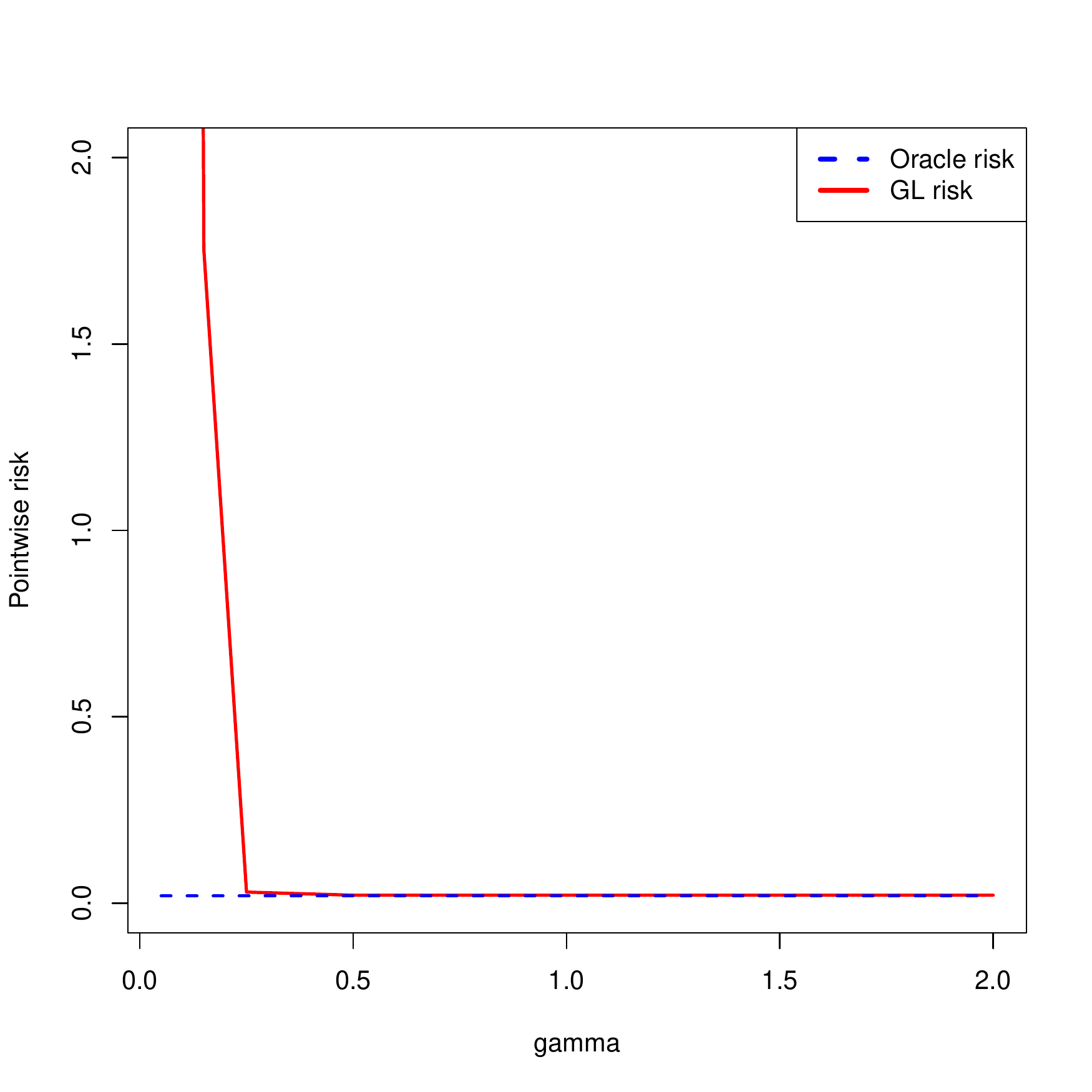} 
\end{tabular}
\caption{Pointwise risk of $\hat {p}_{\hat j }$ at $x_0=0.25$ in function of parameter $\gamma$ for the $Beta(2,2)$ design and $\sigma_{g_L}=0.075$.}
\label{risquegamma}
\end{center}\end{figure}

 \begin{figure}[!h]
 \def\widthboxplot1{4.2cm}
 \centering
\begin{tabular}{cccc }
& $\mathcal{U}[0,1]$ & $Beta(2,2)$ & $Beta(0.5,2)$ \\
$\sigma_{g_L}=0.075$ & \parinc{\widthboxplot1}{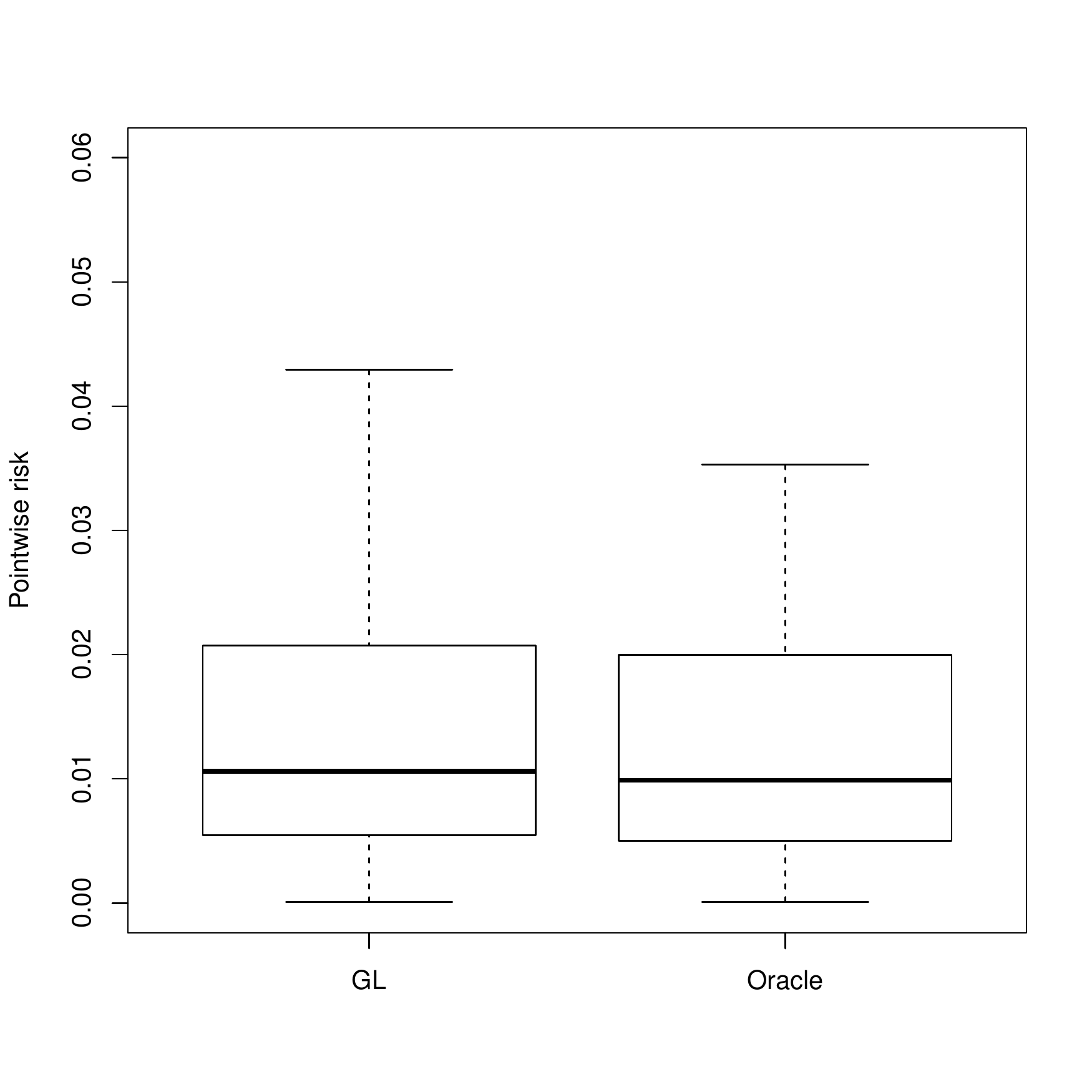} &
 \parinc{\widthboxplot1}{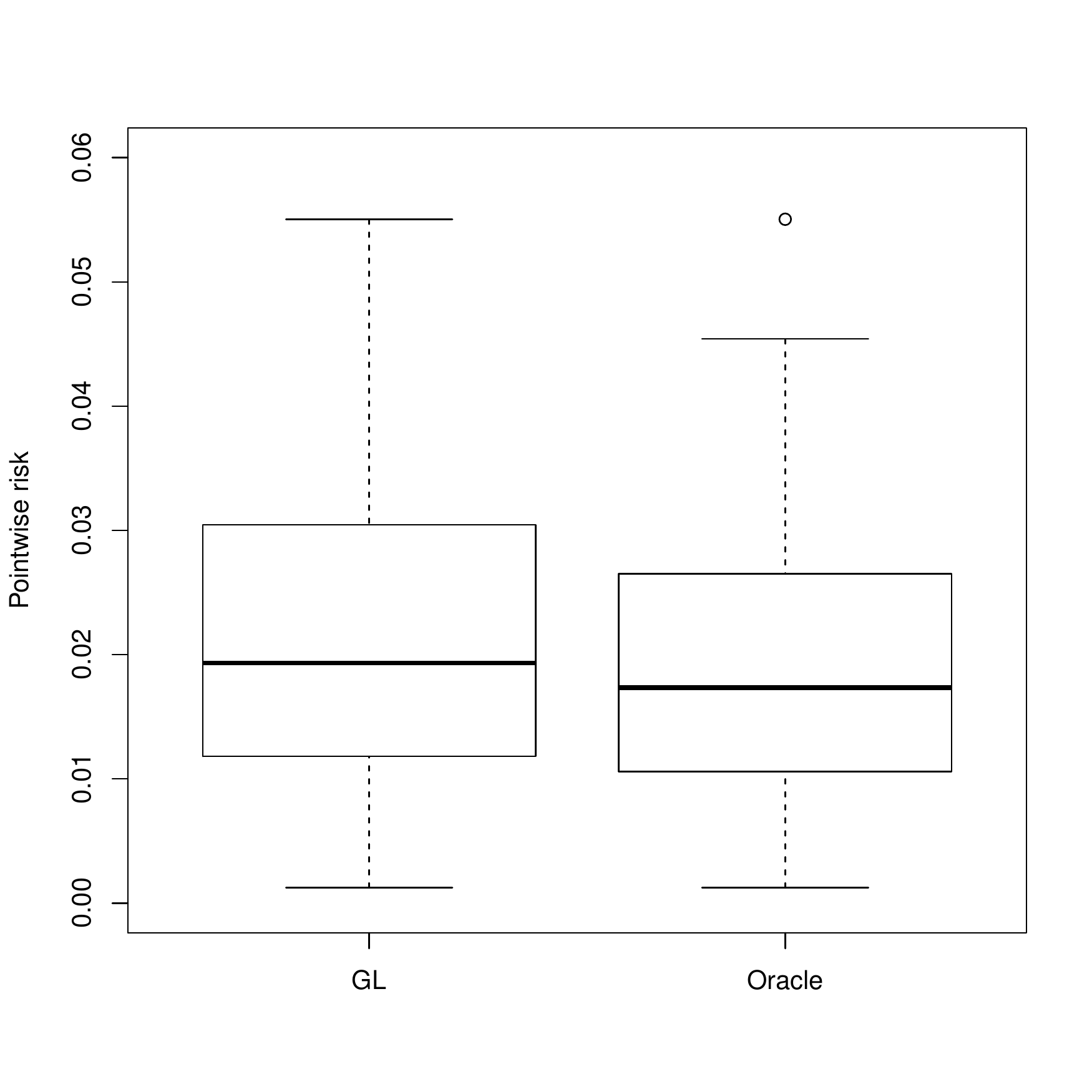} &
 \parinc{\widthboxplot1}{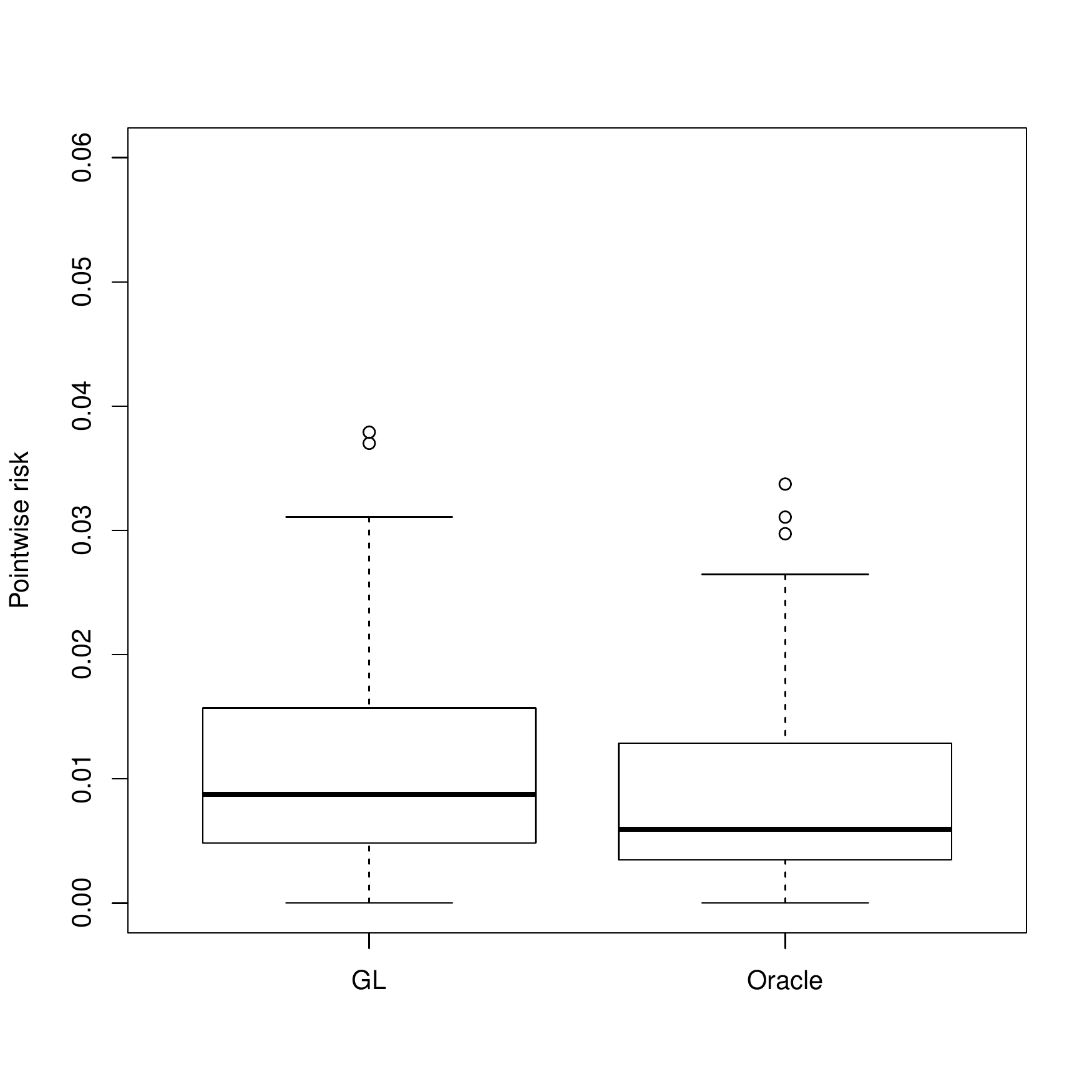} \\
$ \sigma_{g_L}=0.10$ & \parinc{\widthboxplot1}{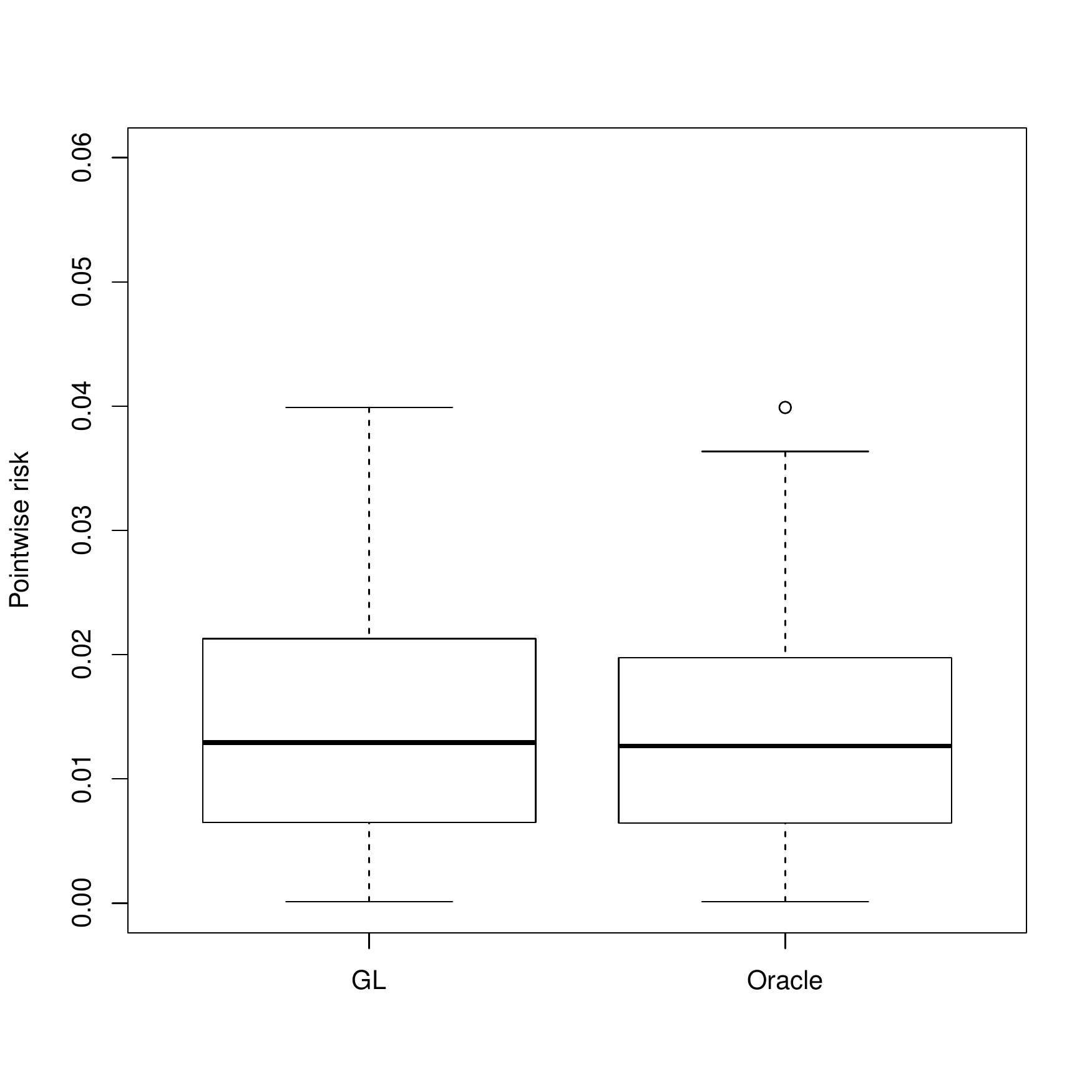} &
 \parinc{\widthboxplot1}{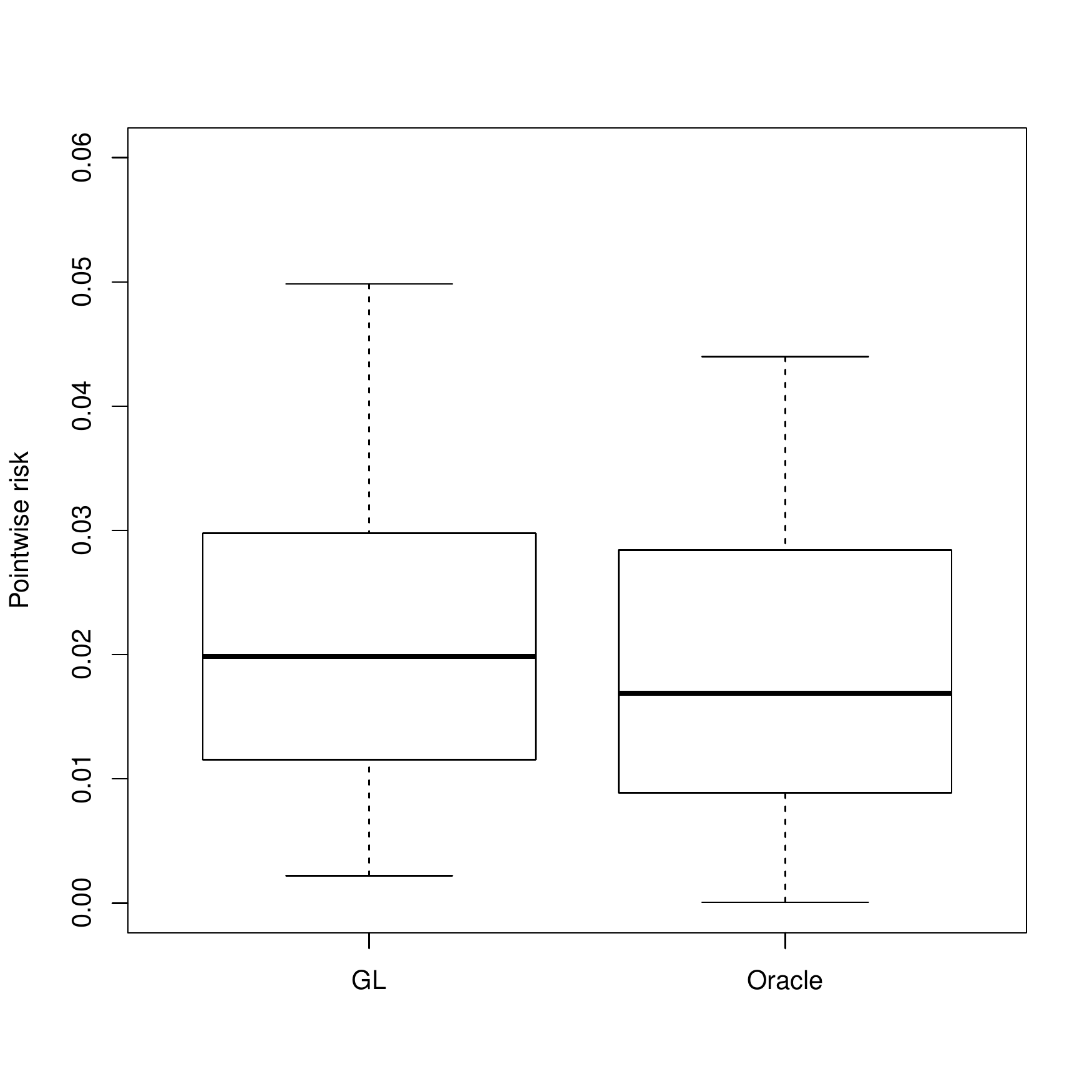}&
 \parinc{\widthboxplot1}{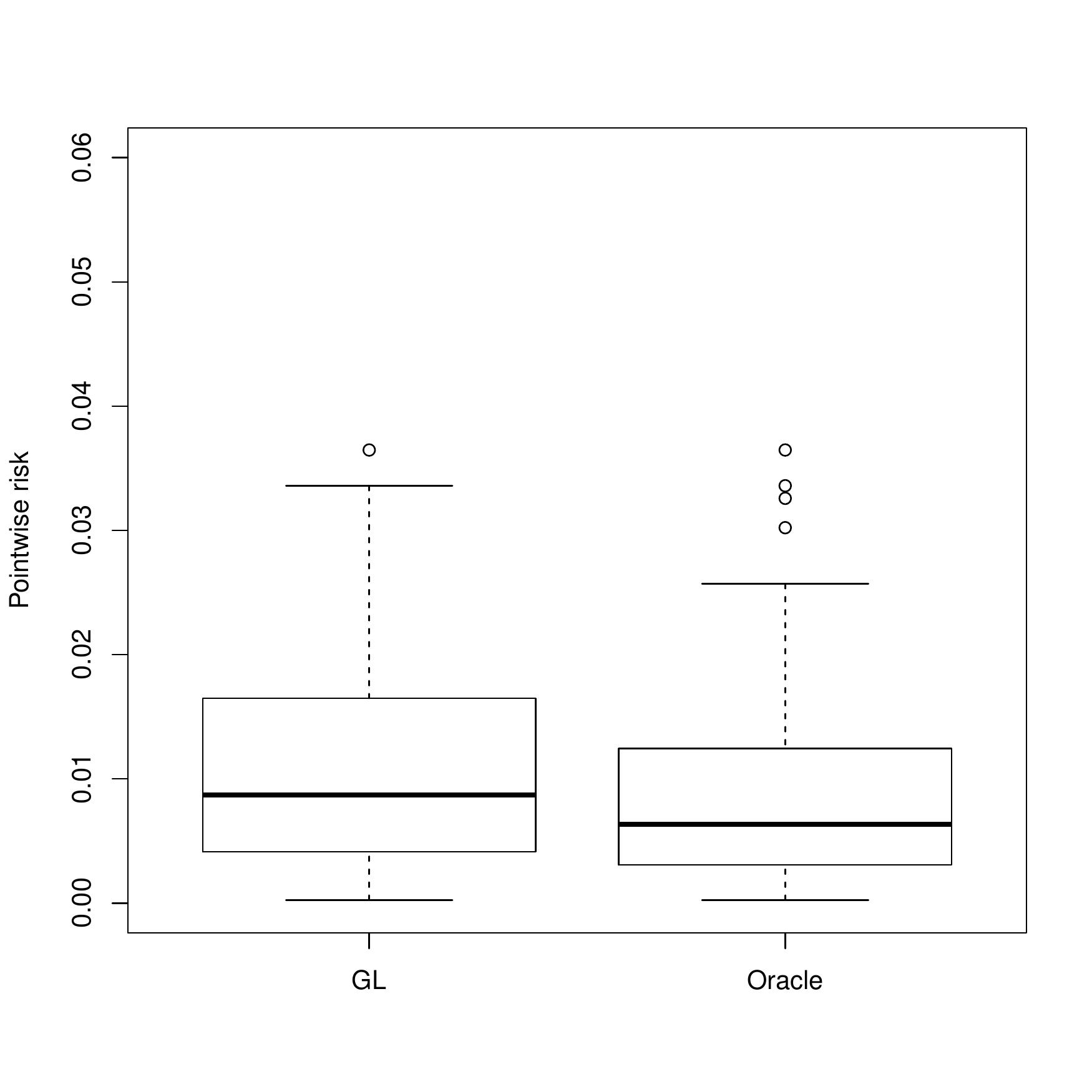} \\
\end{tabular}
\caption{Estimation of $p(x)$ at $x_0=0.25$ }
\label{boxplotx02}
\end{figure}

 \begin{figure}[!h]
 \def\widthboxplot1{4.2cm}
 \centering
\begin{tabular}{cccc }
& $\mathcal{U}[0,1]$ & $Beta(2,2)$ & $Beta(0.5,2)$ \\
$\sigma_{g_L}=0.075$ & \parinc{\widthboxplot1}{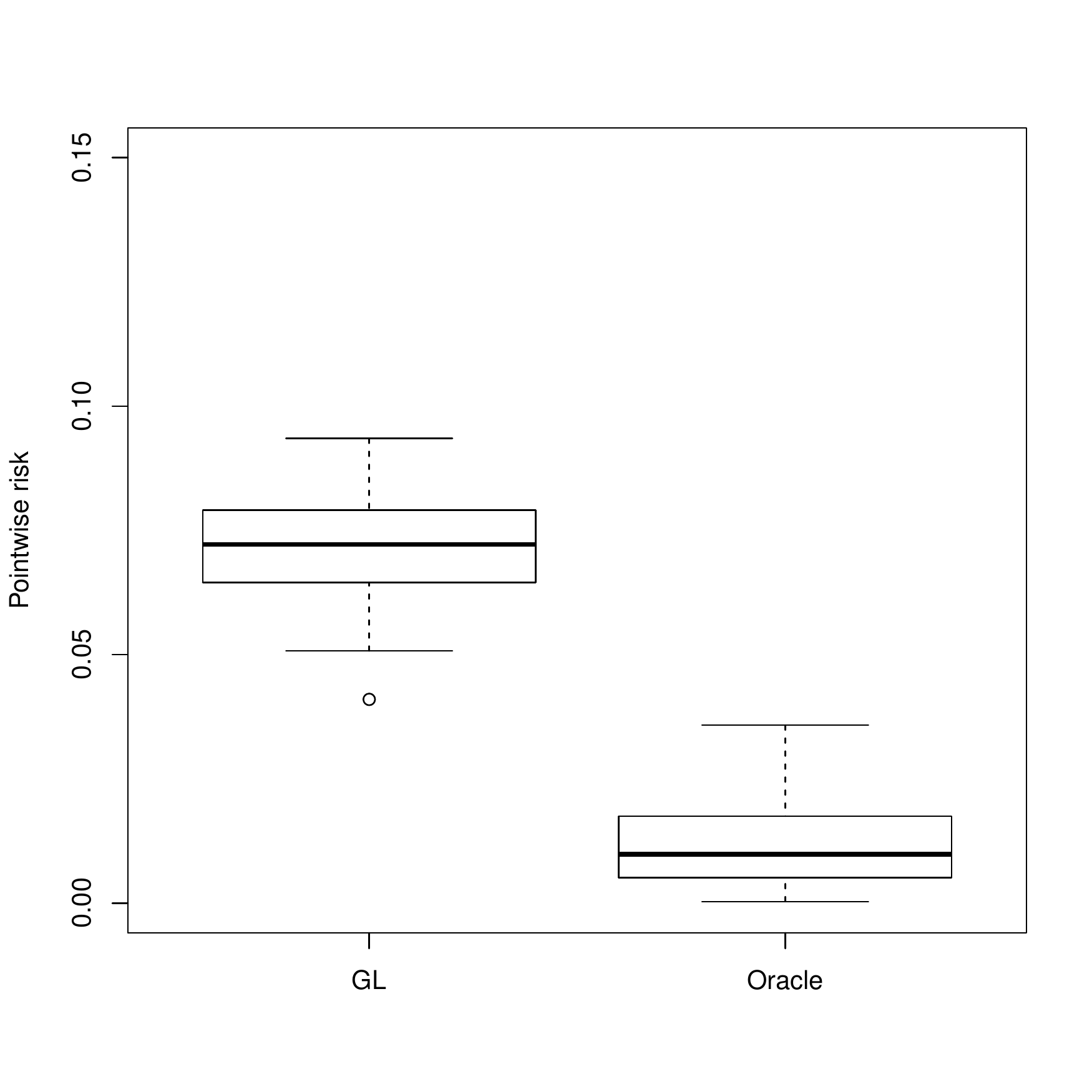} &
 \parinc{\widthboxplot1}{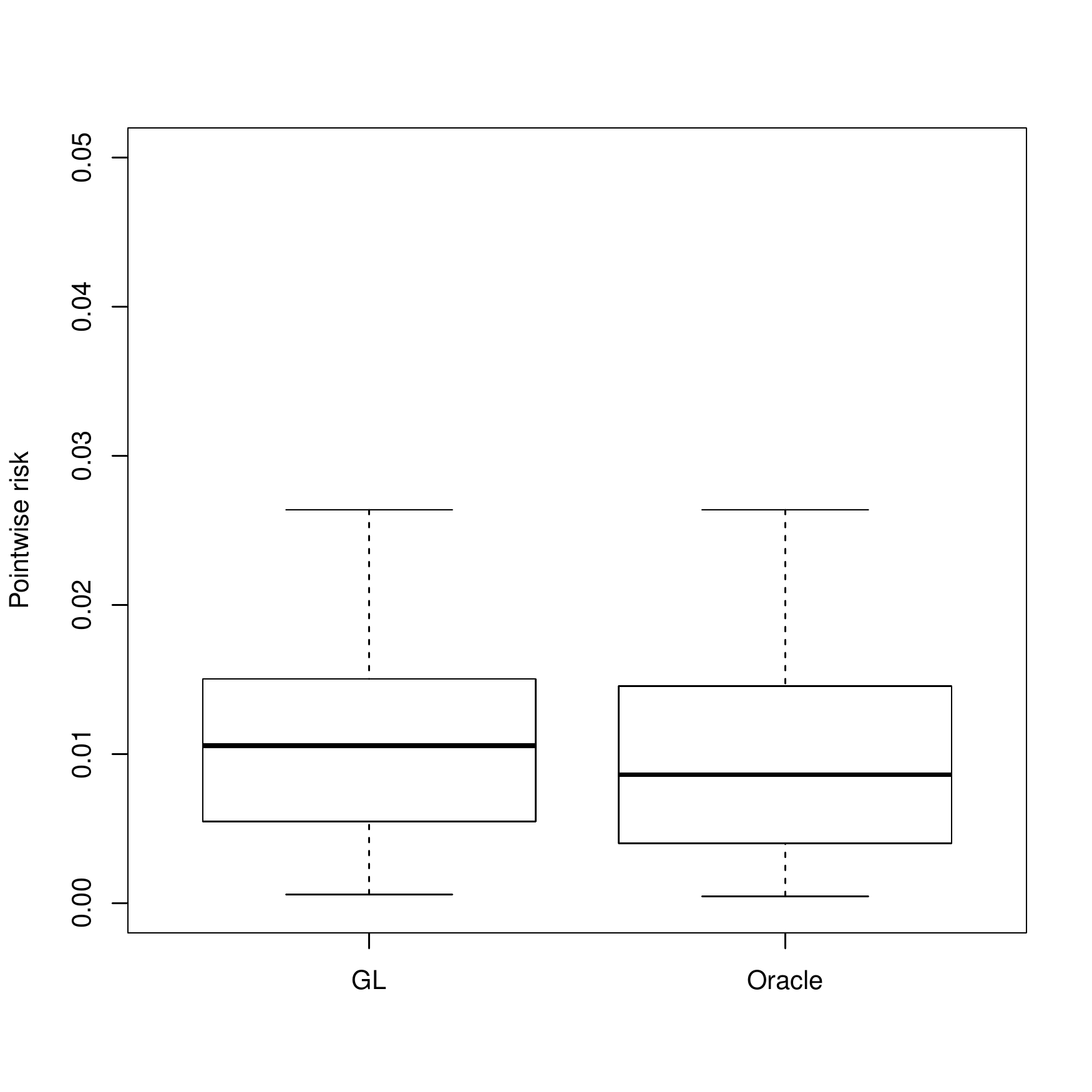} &
 \parinc{\widthboxplot1}{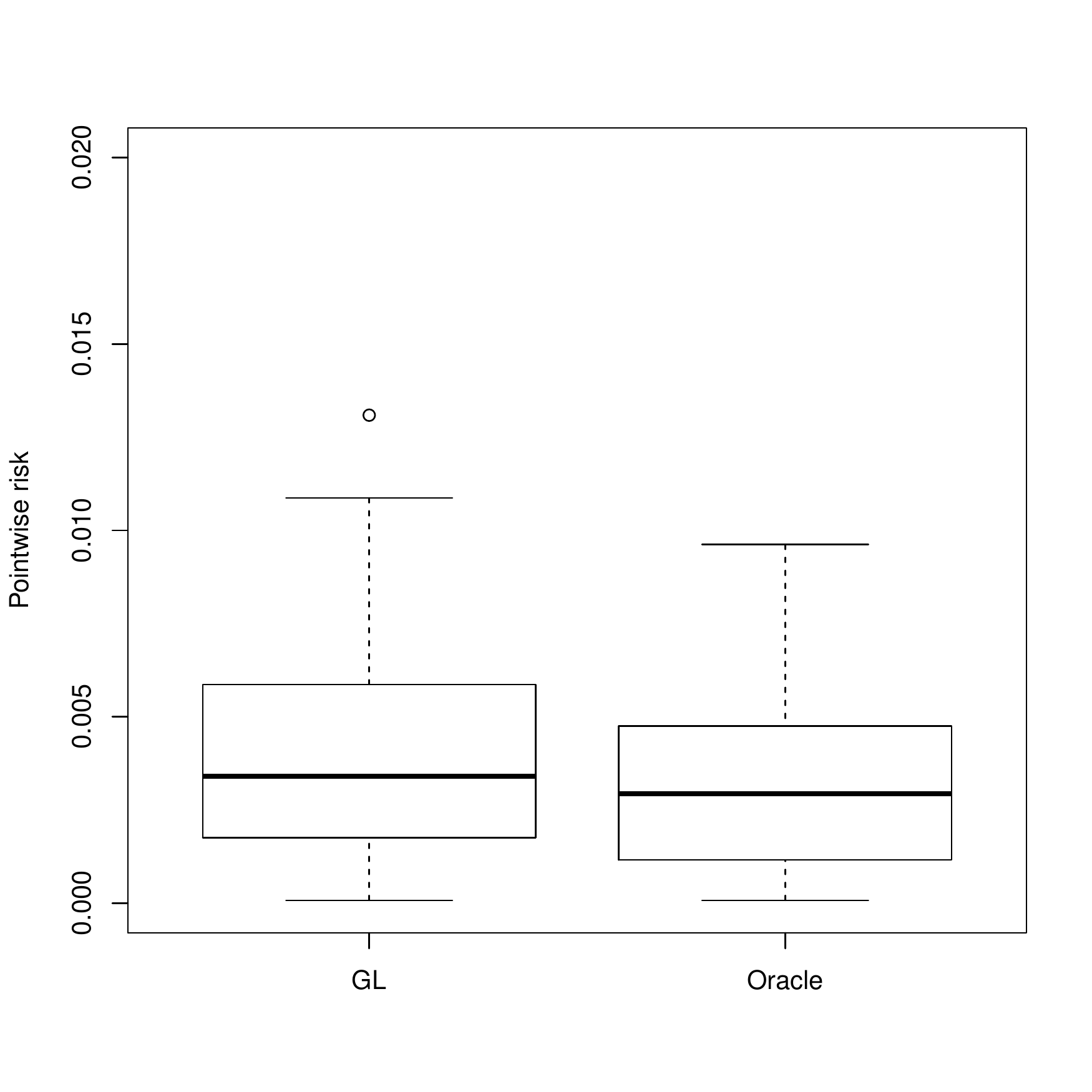} \\
$ \sigma_{g_L}=0.10$ & \parinc{\widthboxplot1}{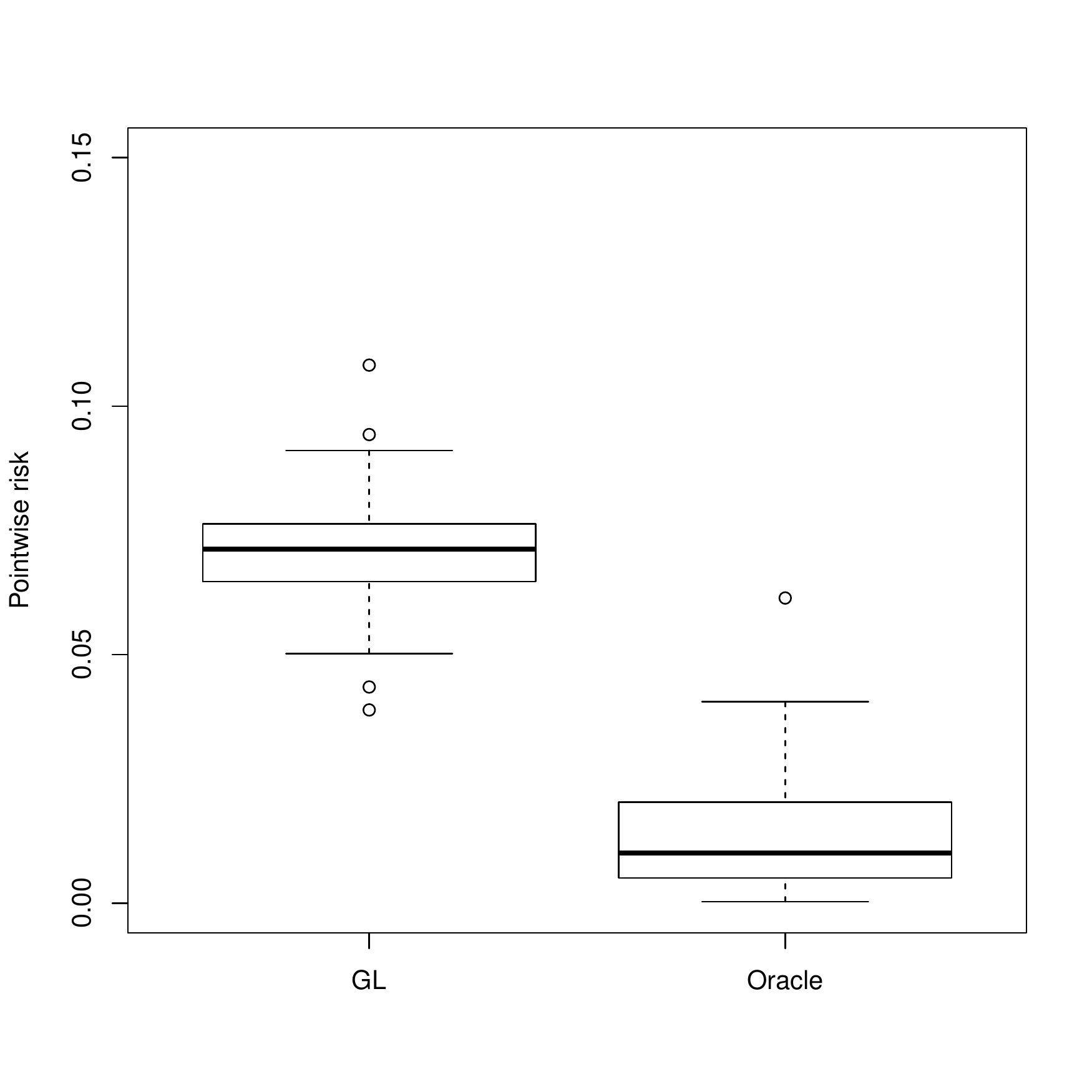} &
 \parinc{\widthboxplot1}{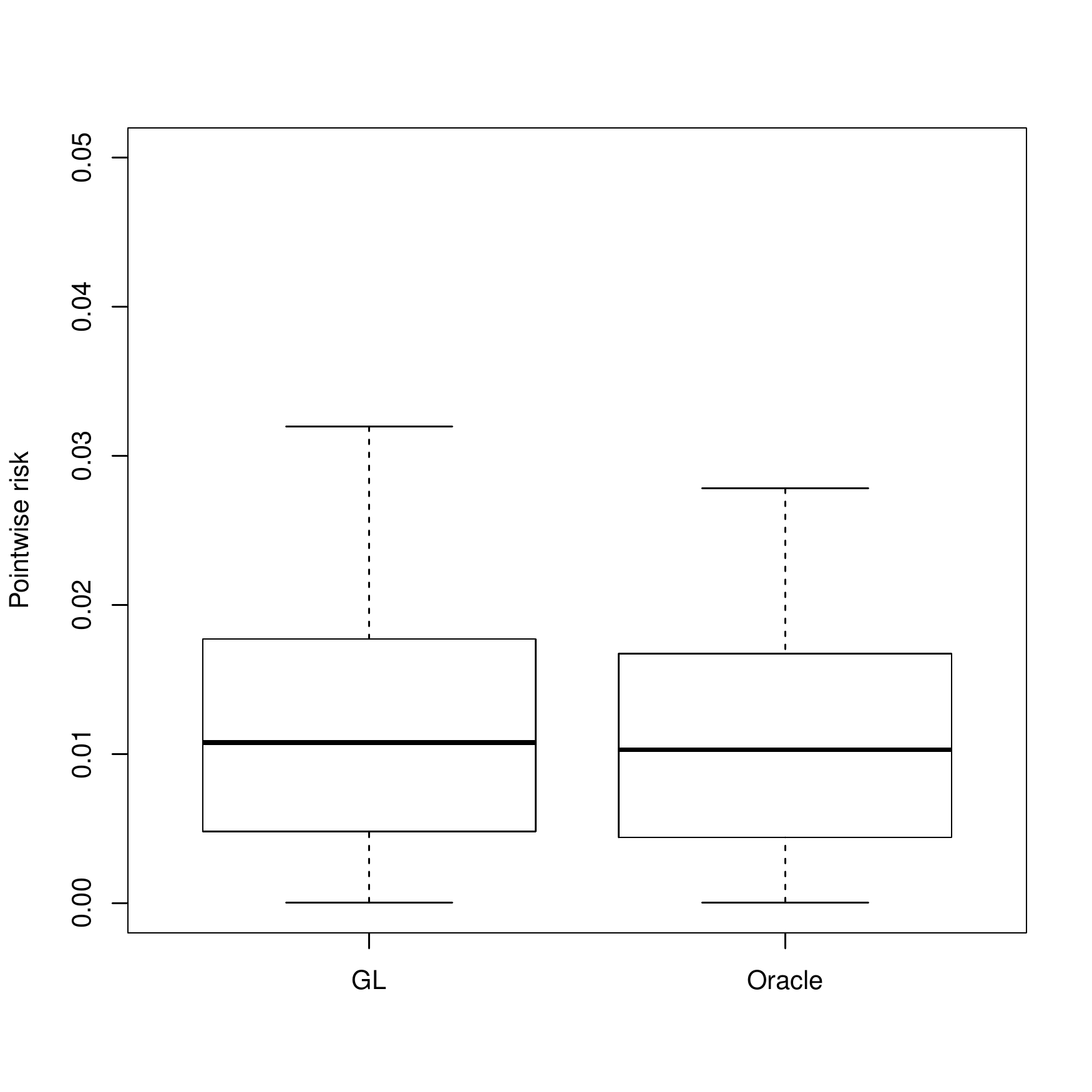}&
 \parinc{\widthboxplot1}{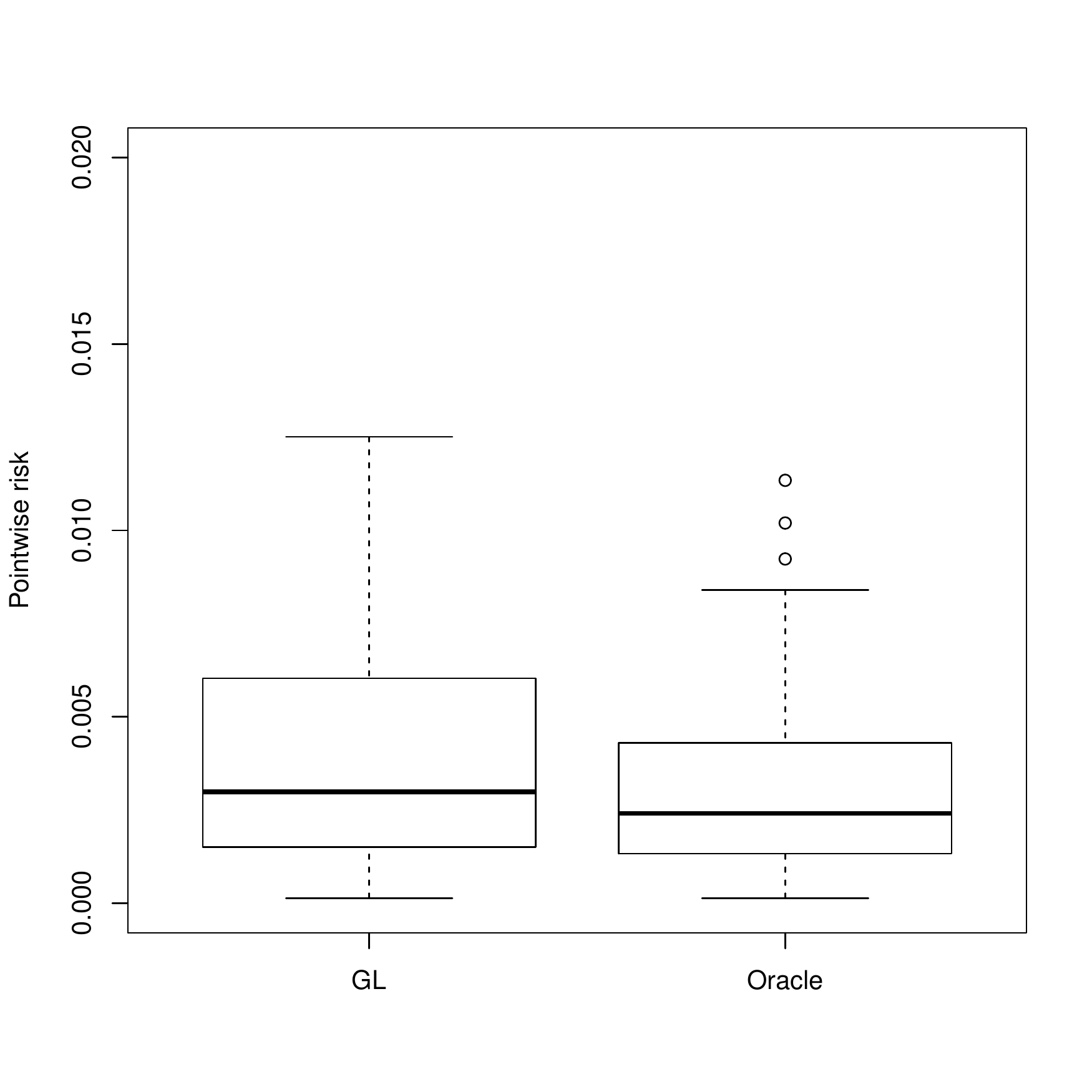} \\
\end{tabular}
\caption{Estimation of $p(x)$ at $x_0=0.90$ }
\label{boxplotx04}
\end{figure}

\begin{table}[!h]
\begin{center}
\begin{minipage}[t]{.49\linewidth}
\begin{tabular}{c|c c c }
 $\sigma_{g_L}$ &   \multicolumn{3}{c}{design of the $X_i$} \\    & $\mathcal{U}[0,1]$  &  $Beta(2,2)$  &  $Beta(0.5,2)$    \\ 
 \hline
 0.075 & 0.0144    &0.0204    &0.0071     \\
0.10  &  0.0156   & 0.0206    & 0.0072    \\ 
\end{tabular}\\
\end{minipage}
\begin{minipage}[t]{.49\linewidth}
\begin{tabular}{c|c c c }
 $\sigma_{g_L}$ &   \multicolumn{3}{c}{design of the $X_i$} \\    & $\mathcal{U}[0,1]$  &  $Beta(2,2)$  &  $Beta(0.5,2)$    \\ 
 \hline
 0.075 & 0.0212  & 0.0177    &  0.1012  \\
0.10  &  0.0192   & 0.0195   &   0.104  \\
\end{tabular}
\end{minipage}
\caption{MAE of $\hat m (x)$: on the left at $x_0=0.25$ and on the right $x_0=0.90$.  }
\label{risquem}
\end{center}
\end{table}

 Boxplots in Figure \ref{boxplotx02} and \ref{boxplotx04} summarize our numerical experiments. Theorem 1 gives an oracle inequality for the estimation of $p(x)$.  We compare the pointwise risk error of  $\hat{p}_{\hat j}(x)$ (computed with $100$ Monte Carlo repetitions) with the oracle risk one. The oracle  is $\hat p_{j_{oracle}}$ with the  index $j_{oracle}$ defined as follows:
 $$j_{oracle} :=\arg \min_{j \in J } |\hat p_j(x) -p(x)|.$$

  In Table  \ref{risquem}, we have computed the MAE (Mean Absolute Error) of $\hat m(x)$ over 100 Monte Carlo runs.

Our performances are close to those of the oracle (see Figure \ref{boxplotx02} and \ref{boxplotx04}) and are quite satisfying both at $x_0=0.25$ and $x_0= 0.90$. When going deeper into details, increasing the Laplace noise  parameter $\sigma_{g_L}$ deteriorates sligthly the performances. Hence it seems that our procedure is robust  to the noise in the covariates and accordingly to the deconvolution step. Concerning the role of the design density, when considering the $Beta(0.5,2)$ distribution, we expect the performances to be better near $0$ as the observations tend to concentrate near $0$ and to be bad close to $1$. Indeed, this phenomenon is confirmed by Table \ref{risquem}. And when comparing the $Beta(2,2)$ and $Beta(0.5,2)$ distributions, the performances are much better for the $Beta(0.5,2)$ at $x_0= 0.25$ whereas the $Beta(2,2)$ distribution yields better results at $x_0= 0.90$. This is what is expected as the two densities charge points near $0$ and $1$ differently. 

\section{Proofs}\label{secpreuve}

\subsection{ Proofs of theorems }\label{preuvetheoremes}

This section is devoted to the proofs of theorems. These proofs use some propositions and technical lemmas which are respectively in section \ref{preuveprop} and \ref{appendix}. 
In the sequel, $C$ is a constant which may vary from one line to another one.  
\subsubsection{Proof of Theorem \ref{oracle}}
\begin{proof}
We firstly recall the basic inequality $ (a_1+\cdots+a_p)^q\leq p^{q-1}(a_1^q+\cdots+a_p^q) $
for all $
a_1,\dots,a_p\in\R_+^p $, $p\in\mathbb N$ and $ q\geq1 $. For ease of exposition, we denote $
\hat p_{\hat
j}(x)=\hat p_{\hat j} $. So, we can show for any $ \eta\in\mathbb N^d $:
\begin{align*}
 \left|\hat p_{\hat j}-p(x)\right|&\leq \left|\hat p_{\hat j}-\hat p_{\hat
j\wedge
\eta}\right|+\left|\hat p_{\hat j\wedge \eta}-\hat p_{\eta}\right|+\left|\hat
p_{\eta}-p(x)\right|\\
&\leq \left|\hat p_{\eta\wedge \hat j}-\hat p_{\hat j}\right|-\Gamma_\gamma(\hat
j,\eta)+\Gamma_\gamma(\hat
j,\eta) +\left|\hat p_{\hat j\wedge \eta}-\hat p_{\eta}\right|-\Gamma_\gamma(\eta,\hat
j)+\Gamma_\gamma(\eta,\hat
j)+\left|\hat p_{\eta}-p(x)\right|\\
&\leq \left|\hat p_{\eta\wedge \hat j}-\hat p_{\hat j}\right|-\Gamma_\gamma(\hat j,\eta)+
\Gamma_\gamma(\eta,\hat j)+\left|\hat p_{\hat j\wedge \eta}-\hat p_{\eta}\right|
-\Gamma_\gamma(\eta,\hat j)+\Gamma_\gamma(\hat
j,\eta)+\left|\hat p_{\eta}-p(x)\right|\\
&\leq \left|\hat p_{\eta\wedge \hat j}-\hat p_{\hat j}\right|-\Gamma_\gamma(\hat
j,\eta)+
\Gamma_\gamma^*(\eta)+\left|\hat p_{\hat j\wedge \eta}-\hat p_{\eta}\right|
-\Gamma_\gamma(\eta,\hat j)+\Gamma_\gamma^*(\hat
j)+\left|\hat p_{\eta}-p(x)\right|\\
&\leq \hat R_\eta+\hat R_{\hat j}+\left|\hat p_{\eta}-p(x)\right|\\
&\leq \hat R_\eta+\hat R_{\hat j}+\left|\E[\hat p_{\eta}]-p(x)\right|+\left|\hat
p_{\eta}-\E[\hat p_{\eta}] \right|\\
&\leq \hat R_\eta+\hat R_{\hat j}+\left|\E[\hat p_{\eta}]-p(x)\right|+\left|\hat
p_{\eta}-\E[\hat p_{\eta}]\right|-\Gamma_\gamma(\eta)+\Gamma_\gamma(\eta)\\
&\leq \hat R_\eta+\hat R_{\hat j}+\left|\E[\hat p_{\eta}]-p(x)\right|+\sup_{j'}\Big\{\left|\hat
p_{j'}-\E[\hat p_{j'}]\right|-\Gamma_\gamma(j')\Big\}_++\Gamma_\gamma^*(\eta)
\end{align*}
By definition of $ \hat j $, we recall that $ \hat R_{\hat j}\leq\inf_\eta \hat R_\eta  $ and
$$
\hat R_\eta\leq \sup_{j,j'}\Big\{\left|\hat p_{j\wedge j'}-\E[\hat p_{j\wedge
j'}]\right|-\Gamma_\gamma(j\wedge
j')\Big\}_++\sup_{j'}\Big\{\left|\hat p_{j'}-\E[\hat
p_{j'}]\right|-\Gamma_\gamma(j')\Big\}_++\sup_{j'}\left|\E[\hat
p_{\eta\wedge j'}]-\E[\hat p_{j'}]\right|+\Gamma^*_\gamma(\eta).
$$
Hence
\begin{eqnarray*}
 \left|\hat p_{\hat j}-p(x)\right| &\leq& 2 \left [   \sup_{j,j'}\Big\{\left|\hat p_{j\wedge j'}-\E[\hat p_{j\wedge
j'}]\right|-\Gamma_\gamma(j\wedge
j')\Big\}_+     +\sup_{j'}\Big\{\left|\hat p_{j'}-\E[\hat
p_{j'}]\right|-\Gamma_\gamma(j')\Big\}_+  + \sup_{j'}\left|\E[\hat
p_{\eta\wedge j'}]-\E[\hat p_{j'}]\right| \right ]\\
&&+2\Gamma^*_\gamma(\eta) + \left|\E[\hat p_{\eta}]-p(x)\right|+\sup_{j'}\Big\{\left|\hat
p_{j'}-\E[\hat p_{j'}]\right|-\Gamma_\gamma(j')\Big\}_++\Gamma_\gamma^*(\eta)
\end{eqnarray*}
By definition of $ B(\eta)=\max\left(\sup_{j'}\left|\E\hat
p_{\eta\wedge j'}-\E\hat p_{j'}\right|,\left|\E\hat p_{\eta}-p(x)\right|\right) $, we get 

$$
 \left|\hat p_{\hat j}-p(x)\right|  \leq 2    \sup_{j,j'}\Big\{\left|\hat p_{j\wedge j'}-\E[\hat p_{j\wedge
j'}]\right|-\Gamma_\gamma(j\wedge
j')\Big\}_+   + 3 \sup_{j'}\Big\{\left|\hat p_{j'}-\E[\hat
p_{j'}]\right|-\Gamma_\gamma(j')\Big\}_+ +3 B(\eta) +3 \Gamma_\gamma^*(\eta)
$$
Consequently 
$$
\left|\hat p_{\hat j}-p(x)\right|^q\leq 3^{2q-1} \left(\big[B(\eta)+
\Gamma^*_\gamma(\eta)\big]^q+\sup_{j'}\Big\{\left|\hat p_{j'}-\E\hat
p_{j'}\right|-\Gamma_\gamma(j')\Big\}_+^q+\sup_{j,j'}\Big\{\left|\hat p_{j\wedge j'}-\E\hat
p_{j\wedge j'}\right|-\Gamma_\gamma(j\wedge
j')\Big\}_+^q\right).
$$
Using Proposition \ref{BorneMajorant}, we have
$$
\E \left |\hat p_{\hat j}-p(x)\right |^q \leq C \left (\E \left [\left (B(\eta)+
\Gamma^*_\gamma(\eta)\right )^q \right ] \right)+o(n^{-q}).
$$
Then, we get
$$
\E\left|\hat p_{\hat j}-p(x)\right|^q\leq R_1\left (\inf_{\eta}\E \left [\left (B(\eta)+
\Gamma^*_\gamma(\eta)\right )^q \right ]\right)+o(n^{-q}),
$$
where $R_1$ is a constant only depending on $q$.

\end{proof}

\subsubsection{Proof of Theorem \ref{th adaptive minimax p}}

\begin{proof}
The proof is a direct application of Theorem \ref{oracle} together with a standard
bias-variance trade-off. We first recall the assertion of this theorem:
$$
\E\left [ \left|\hat p_{\hat j}(x)-p(x)\right|^q \right ]\leq C\left (\inf_{\eta} \E \left [ \left (B(\eta)+
\Gamma^*_\gamma(\eta)\right )^q \right ] \right)+o(n^{-q}) .
$$
For the bias term, we use Proposition \ref{PropBiais} to get:
$$B(\eta)\leq C L \sum_{\el=1}^d
2^{-\eta_\el\beta_\el}, \textrm{for all}\;   \eta\in J. $$ 
Now let us focus on $ \E \left [ 
\Gamma^*_\gamma(\eta)^q \right ] $. We have

\begin{eqnarray*}   
   \E \left [ 
\Gamma_\gamma(\eta)^q \right ] &=  & \E \left [  \left ( \sqrt{\frac{2 \gamma (1+\e)\tilde\sigma_{\eta,\tilde\gamma}^2\log n}{n}}+\frac{c_\eta \gamma \log n}{n} \right)^q \right ]  \\ \nonumber
&\leq& 2^{q-1}\left(   \left ( {\frac{2 \gamma (1+\e)\log n}{n}}  \right )^{\frac q 2 } \E [ \tilde\sigma_{\eta,\tilde\gamma}^q ]+\left (\frac{c_\eta \gamma \log n}{n} \right)^{ q } \right )\\
&\leq&
C \left ( \left  (\frac{\log n}{n} \right)^{\frac q 2} 2^{\Seta(2\nu+1)\frac{q}{2}}+ \left (\frac{c_\eta \log n}{n} \right)^{q } \right),
\end{eqnarray*}
using Lemma \ref{Rosenthal}. 
But

$$c_\eta  =16\left(2\|m\|_\infty+ s\right)\|T_\eta\|_\infty \leq C 2^{\Seta(\nu+1)}, $$
using Lemma \ref{ordredegrandeursigmajTj}.
Hence 
$$
 \E \left [ 
\Gamma_\gamma(\eta)^q \right ]  \leq  C \left (  \left  (\frac{\log n}{n} \right)^{\frac q 2} 2^{\Seta(2\nu+1)\frac{q}{2}} + \left ( \frac{\log n}{n}\right )^q 2^{\Seta(\nu+1)q}   \right).
$$
We have 
\begin{eqnarray*}
\left(\frac{\log n}{n} \right)^{\frac q 2} 2^{\Seta(2\nu+1)\frac{q}{2}}  &\geq& \left(\frac{\log n}{n}\right)^q 2^{\Seta(\nu+1)q} {\Longleftrightarrow} 2^{\Seta} \leq \frac{n}{\log n},
\end{eqnarray*}
which is true since by (\ref{setJ}), $2^{\Seta} \leq \frac{n}{\log^2 n}$ .

This yields
$$\E [\Gamma^*_\gamma (\eta)^q ]\leq C
 \left ( {\frac{ 2^{\Seta(2\nu+1)} \log n}{n}}\right )^{\frac q 2}. $$

Eventually, we obtain the bound for the pointwise risk:
$$
\E\left|\hat p_{\hat j}(x)-p(x)\right|^q\leq C \left (\inf_{\eta}\left\{L\sum_{\el=1}^d 
2^{-\eta_\el\beta_\el}+\sqrt{\frac{ 2^{(2\nu+1)\Seta}\log(n)}{n}}\right\}^q  \right )+o(n^{-q}).
$$
Setting the gradient of the right hand side of the inequality above with respect to $\eta$  it turns out that the optimal $ \eta_{\el}$ is proportional to $\frac{2}{\log 2}\frac{\bar\beta}{\beta_{\el}(2\bar\beta+2\nu+1)}(\log L + \frac 1 2 \log(\frac{n}{\log(n)}))
$, which leads  for $n$ large enough to
\begin{align*}
\E\left|\hat p_{\hat j}(x)-p(x)\right|^q
&\leq L^{\frac{q(2\nu+1)}{2 \bar \beta +2\nu +1}} R_2
\left(\frac{\log(n)}{n}\right)^{ \frac{\bar\beta q}{2\bar\beta+2\nu+1}},
\end{align*}
with $R_2$ a constant depending on $\gamma,q,\eps, \tilde \gamma, \| m\|_{\infty}, s,  \| f_X\|_{\infty},\varphi, c_g, \mathcal{C}_g, \vec{\beta}$.
The proof of Theorem  \ref{th adaptive minimax p} is completed.

\end{proof}

\medskip

\subsubsection{Proof of Theorem \ref{th adaptive minimax m}}

\begin{proof}
We recall that $m(x)=\frac{p(x)}{f_X(x)}$ and $ \hat m(x)=\frac{\hat{p}_{\hat j}(x)}{ \hat f_X(x)\vee n^{-1/2}}$.
We now state the main properties of the adaptive estimate $ \hat f_X $ showed by \cite{ComteLacour} (Theorem 2): for all $q\geq1$, all $
\vec\beta\in(0,1]^d $, all $L>0$  and $n$ large enough, it holds
\begin{equation}\label{eq result Lacour}
\mathbb{P}\left(E_1\right):=\mathbb{P}\left(|\hat f_X(x)-f_X(x)|\geq C \phi_{n}(\vec\beta)\right)\leq   n^{-2q},
\end{equation}
and
\begin{equation}\label{eq supbound Lacour}
\mathbb{P}\left(|\hat f_X(x)-f_X(x)|\leq C  n\right)=1,
\end{equation}
where $\phi_{n}(\vec\beta):=\left({\log(n)}/{n}\right)^{\bar\beta/(2\bar\beta+2\nu+1)}$. Although the construction of the estimate $\hat f_X(x)$ depends on $q$, we remove the dependency for ease of exposition (see \cite{ComteLacour} Section 3.4 for further details).
From (\ref{eq result Lacour}), we easily deduce, since $f_X(x)\geq C_1>0$, for $ n $ large enough that 
\begin{equation}\label{eq minimum estimate}
\mathbb{P}\left(E_2\right):=\mathbb{P}\left(\hat f_X(x)<\frac{C_1}{2}\right)\leq  n^{-2q}.
\end{equation}
We now start the proof of the theorem. We have together with \eqref{eq supbound Lacour}
\begin{align*}
\left|\hat m(x)-m(x)\right|=\left |\frac{\hat p_{\hat j}(x)}{\hat f_X(x)\vee n^{-1/2}}-\frac{p(x)}{f_X(x)}\right| 
&\leq \left | \frac{\hat{p}_{\hat j}(x)}{\hat f_X(x)\vee n^{-1/2}} -\frac{p(x)}{\hat f_X(x)\vee n^{-1/2}} \right |
+ \left | \frac{p(x)}{\hat f_X(x)\vee n^{-1/2}} -\frac{p(x)}{f_X(x)}\right |\\
&\leq \left | \frac{\hat{p}_{\hat j}(x)-p(x)}{\hat f_X(x)\vee n^{-1/2}} \right|
+ \|m\|_\infty\|f_X\|_\infty\left | \frac{(\hat f_X(x)\vee n^{-1/2})-f_X(x)}{f_X(x)(\hat f_X(x)\vee n^{-1/2})}\right |\\
&:=\mathcal A_1+\|m\|_\infty\|f_X\|_\infty\mathcal A_2.
\end{align*}
\paragraph{Control of $\E[\mathcal A_1^q]$.}
Using Cauchy-Schwarz inequality and the inequality $ \hat f_X(x)\vee n^{-1/2}\geq n^{-1/2} $, we obtain for $ n $ large enough
\begin{align*}
 \E[\mathcal A_1^q ]=\E [\mathcal A_1^q\1_{E_2^c} ]+\E [\mathcal A_1^q\1_{E_2}] &\leq \E [\mathcal A_1^q\1_{E_2^c} ]+\sqrt{\E[\mathcal A_1^{2q}] }\sqrt{\P (E_2)}\\
&\leq C \E \left [\left|\hat p_{\hat j}(x)-p(x)\right|^q \right ]+n^{q/2}\sqrt{\E \left [\left|\hat p_{\hat j}(x)-p(x)\right|^{2q} \right ]}\sqrt{\P (E_2)}.\\ 
 \end{align*}
Then, using Theorem \ref{th adaptive minimax p} and \eqref{eq minimum estimate}, we finally have $ \E[\mathcal A_1^q]\leq C  \phi_{n}^q(\vec\beta)$.

\paragraph{Control of $\E[\mathcal A_2^q]$.}
Using \eqref{eq supbound Lacour} and the inequality $ \hat f_X(x)\vee n^{-1/2}\geq n^{-1/2} $, it holds for $ n $ large enough
$$
\E [\mathcal A_2^q ]\leq\E [\mathcal A_2^q\1_{E_1^c\cap E_2^c} ]+\E [\mathcal A_2^q(\1_{E_1}+\1_{E_2}) ] \leq\E [\mathcal A_2^q\1_{E_1^c\cap E_2^c} ]
+Cn^{3q/2}(\P({E_1})+\P({E_2})).
$$
Then, using the definition of $\mathcal A_2$, \eqref{eq result Lacour} and \eqref{eq minimum estimate}, we obtain $ \E[\mathcal A_2^q] \leq C  \phi_{n}^q(\vec\beta)$.

\medskip
\noindent Eventually, by definitions of $\mathcal A_1$ and $\mathcal A_2$, the proof is completed and
\begin{align*}
 \E [\left|\hat m(x)-m(x)\right|^q] \leq C ( \E [\mathcal A_1^q]+\E [\mathcal A_2^q ])\leq L^{\frac{q(2\nu+1)}{2 \bar \beta +2\nu +1}} R_3  \left(\frac{\log(n)}{n}\right)^{q\bar\beta/(2\bar\beta+2\nu+1)},
 \end{align*}
 where $R_3$ is a constant depending on $\gamma,q,\eps, \tilde \gamma, \| m\|_{\infty}, s,  \| f_X\|_{\infty},\varphi, c_g, \mathcal{C}_g, \vec{\beta}$. This completes the proof of Theorem \ref{th adaptive minimax m}.

\end{proof}

\subsection{Statements and proofs of auxiliary results}
This section is devoted to statements and proofs of auxiliary  results used in section  \ref{preuvetheoremes}

\subsubsection{ Statements and proofs of propositions } \label{preuveprop}

Let us start with Proposition \ref{Th:conc} which states a concentration inequality of $\hat p_j$ around $p_j$. 
\begin{Prop}\label{Th:conc}
Let $j$ be fixed. For any $u>0$,
\begin{equation}\label{concentrationp_jsigma_j}
\P\left(|\hat p_j(x)-p_j(x)|\geq \sqrt{\frac{2\sigma_j^2u}{n}}+\frac{c_ju}{n}\right)\leq 2e^{-u},
\end{equation}
where $$\sigma_j^2=\var(Y_1T_j(W_1)).$$
For any $\tilde\gamma >1$ we have for any $\tilde\e>0$ that there exists $R_4$ only depending on $\tilde\gamma$ and $\tilde\e$ such that
$$\P(\sigma^2_j\geq (1+\tilde\e)\tilde\sigma^2_{j,\tilde\gamma})\leq R_4 n^{-\tilde\gamma},$$
$\tilde \sigma_{j,\tilde\gamma}^2$ being defined in (\ref{sigmatilde}).

\end{Prop}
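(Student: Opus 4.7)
The plan is to prove the two statements separately.

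\textbf{Part 1 (Bernstein concentration of $\hat p_j$).} Since $\E[\hat p_j(x)]=p_j(x)$ (Lemma~\ref{SansBiais}) and $\hat p_j(x)=\tfrac{1}{n}\sum_{u=1}^n U_j(Y_u,W_u)$ is a sum of i.i.d.\ random variables, I would apply the classical Bernstein inequality. The obstacle is that $U_j(Y_u,W_u)=Y_u T_j(W_u)$ is unbounded because of the Gaussian noise $\varepsilon_u$, so Bernstein's moment condition must be verified by hand. The natural decomposition is
$$U_j(Y,W)-\E U_j(Y,W)=A+B,\qquad A:=m(X)T_j(W)-\E[m(X)T_j(W)],\quad B:=\varepsilon T_j(W).$$
The summand $A$ is centered and bounded by $2\|m\|_\infty\|T_j\|_\infty$, so Bernstein's moment condition holds trivially for it with variance parameter $\var(A)$. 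For $B$, I would use independence of $\varepsilon$ from $(X,W)$ together with the elementary Gaussian moment bound $\E|Z|^k\leq k!/2$ for $Z\sim N(0,1)$, $k\geq 2$, and $\E|T_j(W)|^k\leq \|T_j\|_\infty^{k-2}\E T_j(W)^2$, to obtain
$$\E|B|^k\leq \tfrac{k!}{2}(s\|T_j\|_\infty)^{k-2}\var(B),$$
i.e., Bernstein's condition with constant $s\|T_j\|_\infty$. The convexity bound $(a+b)^k\leq 2^{k-1}(a^k+b^k)$, combined with $\sigma_j^2=\var(A)+\var(B)$ (the cross-term vanishes since $\E\varepsilon=0$ and $\varepsilon\perp(X,W)$), then yields Bernstein's moment condition for $U_j-\E U_j$ with variance $\sigma_j^2$ and constant of order $(\|m\|_\infty+s)\|T_j\|_\infty$. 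Applying Bernstein delivers the stated inequality with $c_j=16(2\|m\|_\infty+s)\|T_j\|_\infty$ (the constant $16$ absorbs the combinatorial loss in the convexity step).

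\textbf{Part 2 (bound on $\sigma_j^2$).} I would work on the event $\Omega_0:=\{\max_{l\leq n}|\varepsilon_l|\leq s\sqrt{2\tilde\gamma'\log n}\}$ for some $\tilde\gamma'$ chosen slightly larger than $\tilde\gamma$, so that the Gaussian tail and a union bound give $\P(\Omega_0^c)\leq R_4 n^{-\tilde\gamma}$. On $\Omega_0$ we have $|U_j(Y_l,W_l)|\leq C_j$, placing us in the bounded regime. Rewriting the U-statistic as $\hat\sigma_j^2=\tfrac{n}{n-1}(\overline{U_j^2}-\bar U_j^2)$, I would apply Bernstein to the empirical means $\bar U_j$ and $\overline{U_j^2}$ separately (both kernels are now bounded on $\Omega_0$, with second moments controlled by $\sigma_j^2$ and $C_j^2$ respectively). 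Combining these two concentration statements gives, with probability at least $1-R_4 n^{-\tilde\gamma}$,
$$\sigma_j^2-\hat\sigma_j^2\leq 2C_j\sqrt{\tfrac{2\tilde\gamma\,\sigma_j^2\log n}{n}}+\kappa\,\tilde\gamma\, C_j^2\tfrac{\log n}{n}$$
for some explicit $\kappa$. Finally I would invert this by treating it as a quadratic inequality in $\sqrt{\sigma_j^2}$: applying Young's inequality $2ab\leq \delta a^2+\delta^{-1}b^2$ with $\delta$ calibrated in terms of $\tilde\e$ transfers $\sigma_j^2$ from the right-hand side to the left-hand side and produces $\sigma_j^2\leq(1+\tilde\e)\bigl[\hat\sigma_j^2+2C_j\sqrt{2\tilde\gamma\hat\sigma_j^2\log n/n}+8\tilde\gamma C_j^2\log n/n\bigr]=(1+\tilde\e)\tilde\sigma_{j,\tilde\gamma}^2$, matching exactly the form in (\ref{sigmatilde}).

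\textbf{Expected main obstacle.} The delicate point is Part 2: the U-statistic $\hat\sigma_j^2$ has kernel $(U_j(Y_l,W_l)-U_j(Y_v,W_v))^2$ which is unbounded because of $\varepsilon T_j(W)$, so bounded U-statistic concentration does not apply off the shelf. The truncation event $\Omega_0$ reduces the problem to the bounded setting but forces the $\sqrt{\log n}$ factor inside the effective bound $C_j$, which is precisely why (\ref{sigmatilde}) is built with $C_j$ rather than $c_j$. The subsequent algebraic inversion from a bound involving $\sigma_j^2$ on both sides to the clean form $(1+\tilde\e)\tilde\sigma_{j,\tilde\gamma}^2$ is routine but requires careful tracking of constants so that the prefactor $(1+\tilde\e)$ really does absorb the slack produced by Young's inequality.
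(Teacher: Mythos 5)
Your Part~1 is fine and is essentially the paper's own argument: verify a Bernstein-type moment condition for $U_j-\E U_j$ by splitting off the bounded part $m(X)T_j(W)$ and the Gaussian part $\e T_j(W)$, bound Gaussian moments, and apply Bernstein (the paper invokes Massart's Proposition~2.9), arriving at the same $c_j$. The gap is in Part~2, and it is not merely "careful tracking of constants". With your uncentered decomposition $\hat\sigma_j^2=\tfrac{n}{n-1}(\overline{U_j^2}-\bar U_j^2)$, the Bernstein variance proxy for the squares is $\var(U_j^2)\leq C_j^2\,\E[U_j^2]=C_j^2\bigl(\sigma_j^2+p_j(x)^2\bigr)$, not $C_j^2\sigma_j^2$ (the mean $\E U_j=p_j(x)$ does not vanish), and adding the two separate deviation bounds (for $\overline{U_j^2}$ and for $\bar U_j^2-(\E U_j)^2$) produces a leading term strictly larger than $2C_j\sigma_j\sqrt{2\tilde\gamma\log n/n}$. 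This matters because the constants $2$ and $8$ in \eqref{sigmatilde} are exactly what the quadratic inversion of a coefficient-$2$ deviation bound yields; the prefactor $(1+\tilde\e)$, which must work for \emph{every} $\tilde\e>0$, can only absorb $1+o(1)$ corrections, and in the regime $2^{\Sj}\asymp n/\log^2 n$ the three terms of $\tilde\sigma^2_{j,\tilde\gamma}$ are of the same order, so a larger leading constant cannot be hidden there. For the same reason your proposed inversion via Young's inequality ($2ab\leq\delta a^2+\delta^{-1}b^2$) is too lossy: the $\delta^{-1}$ blow-up lands on the $C_j^2\log n/n$ term, whose target coefficient $8\tilde\gamma$ is fixed; one must solve the quadratic in $\sigma_j$ exactly, as the paper does. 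Finally, you cannot take the truncation level $\tilde\gamma'>\tilde\gamma$: the sup-bound on $\Omega_0$ then becomes $(\|m\|_\infty+s\sqrt{2\tilde\gamma'\log n})\|T_j\|_\infty>C_j$, again inflating the leading constant, while $C_j$ in \eqref{sigmatilde} is hard-coded with $\tilde\gamma$.

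The repair is to exploit shift-invariance of the sample variance and center at $\E U_j$ from the start, i.e.\ $\hat\sigma_j^2=\tfrac{n}{n-1}\bigl(s_j^2-(\bar U_j-\E U_j)^2\bigr)$ with $s_j^2:=\tfrac1n\sum_{\el}(U_j(Y_\el,W_\el)-\E U_j)^2$. Conditionally on $\Omega_n(\tilde\gamma)$ the summands of $s_j^2$ are bounded by $4C_j^2$ with variance at most $4C_j^2\sigma_j^2(1+o(1))$, so Bernstein gives the deviation $\sigma_j^2\leq s_j^2+2C_j\sigma_j\sqrt{2u(1+o(1))/n}+\sigma_j^2u/(3n)$ — the correct coefficient $2$ — while $(\bar U_j-\E U_j)^2$ is of order $\sigma_j^2\log n/n$, a multiplicative $1+o(1)$ correction; one must also track the small discrepancies between conditional and unconditional moments caused by restricting to $\Omega_n(\tilde\gamma)$ (the paper's sequences $e_{n,j}$), which is what $(1+\tilde\e)$ ultimately absorbs. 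This is in substance the paper's proof: it writes $\hat\sigma_j^2=s_j^2-\tfrac{2}{n(n-1)}\xi_j$ and controls the centered cross term $\xi_j$ with the Houdr\'e--Reynaud-Bouret inequality for degenerate U-statistics (your observation that this term is just $\tfrac{n}{n-1}(\bar U_j-\E U_j)^2-\tfrac{1}{n-1}s_j^2$, hence controllable by a plain Bernstein bound, is a legitimate simplification), and then inverts the quadratic $\theta_1\sigma^2-2\theta_2\sigma-\theta_3\leq0$ exactly to land on $(1+\tilde\e)\tilde\sigma^2_{j,\tilde\gamma}$.
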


\begin{proof} 

First, note that
 $$\hat{p}_j(x)=\sum_k \hat{p}_{jk} \varphi_{jk}(x)=\frac 1 n \sum_{\el=1}^n Y_\el \sum_k (\mathcal{D}_j \varphi)_{j,k}(W_\el) \varphi_{jk}(x)=\frac 1 n \sum_{\el=1}^n U_j(Y_\el,W_\el).$$

To prove Proposition~\ref{Th:conc}, we apply the Bernstein inequality to the variables $U_j(Y_\el,W_\el)-\E[U_j(Y_\el,W_\el)]$ that are independent.
Since,
$$U_j(Y_\el,W_\el)=Y_\el T_j(W_\el),$$
and 
$$\E\left[\e_\el T_j(W_\el)\right]=0,$$
we have for any $q\geq 2$,
\begin{equation}\label{defA_q}
A_q:=\sum_{\el=1}^n\E[|U_j(Y_\el,W_\el)-\E[U_j(Y_\el,W_\el)]|^q]=\sum_{\el=1}^n\E\left[|m(X_\el)T_j(W_\el)+\e_\el T_j(W_\el)-\E[m(X_\el)T_j(W_\el)]|^q\right].
\end{equation}
With $q=2$,
\begin{eqnarray*}
A_2&=&\sum_{\el=1}^n\E[|U_j(Y_\el,W_\el)-\E[U_j(Y_\el,W_\el)]|^2]\\
&=&n\var(Y_1T_j(W_1))\\
&=&n\E[(m(X_1)T_j(W_1)+\e_1T_j(W_1)-\E[m(X_1)T_j(W_1)])^2]\\
&=&n\E[\e_1^2T_j^2(W_1)]+n\var(m(X_1)T_j(W_1))\\
&=&n\left(\sigma^2_\e\E[T_j^2(W_1)]+\var(m(X_1)T_j(W_1))\right).
\end{eqnarray*}
Now, for any $q\geq 3$, with $Z\sim{\mathcal N}(0,1)$,
\begin{eqnarray*}
A_q&\leq&n2^{q-1}\left(\E[|m(X_1)T_j(W_1)-\E[m(X_1)T_j(W_1)]|^q]+\E[|\e_1T_j(W_1)|^q]\right)\\
&\leq&n2^{q-1}\left(\E[|m(X_1)T_j(W_1)-\E[m(X_1)T_j(W_1)]|^q]+s^q\E[|Z|^q]\E[|T_j(W_1)|^q]\right)\\
&\leq&n2^{q-1}\left(\E[|m(X_1)T_jW_1)-\E[m(X_1)T_j(W_1)]|^q]+s^q\E[|Z|^q]\E[T_j^2(W_1)]\|T_j\|_\infty^{q-2}\right).
\end{eqnarray*}
Furthermore,
\begin{eqnarray*}
\E[|m(X_1)T_j(W_1)-\E[m(X_1)T_j(W_1)]|^q]&\leq&\E[(m(X_1)T_j(W_1)-\E[m(X_1)T_j(W_1)])^2]\times (2\|m\|_\infty\|T_j\|_\infty)^{q-2}\\
&=&\var(m(X_1)T_j(W_1))\times (2\|m\|_\infty\|T_j\|_\infty)^{q-2}.
\end{eqnarray*}
Finally,
\begin{eqnarray*}
A_q&\leq&n2^{q-1}\|T_j\|_\infty^{q-2}\left(\var(m(X_1)T_j(W_1))\times (2\|m\|_\infty)^{q-2}+s^q\E[|Z|^q]\E[T_j^2(W_1)]\right)\\
&\leq&n2^{q-1}\|T_j\|_\infty^{q-2}\E[|Z|^q]\left(\var(m(X_1)T_j(W_1))\times (2\|m\|_\infty)^{q-2}+s^q\E[T_j^2(W_1)]\right)\\
&\leq&n2^{q-1}\|T_j\|_\infty^{q-2}\E[|Z|^q]\left(\var(m(X_1)T_j(W_1))+s^2\E[T_j^2(W_1)]\right)\times \left((2\|m\|_\infty)^{q-2}+s^{q-2}\right)\\
&\leq&2^{q-1}\|T_j\|_\infty^{q-2}\E[|Z|^q]\times A_2\times \left(2\|m\|_\infty+s\right)^{q-2}.
\end{eqnarray*}
Besides we have (see page 23 in \cite{Patel})  denoting $\Gamma$  the Gamma function
\begin{equation} \label{MajorationMomentGaussienne}
 \E[|Z|^q]=\frac{2^{q/2}}{\sqrt{\pi}} \Gamma \left (\frac{q+1}{2} \right)  \leq 2^{q/2}2^{-1/2} q! \leq  2^{(q-1)/2}q! ,
 \end{equation}
 as $\frac{1}{\sqrt{\pi}}\leq \frac{ 1}{ \sqrt{2}} $ and $\Gamma(\frac{q+1}{2})\leq \Gamma(q+1)=q!$.
 So, for $q\geq 3$,
\begin{eqnarray*}
A_q & \leq & 2^{q-1}\|T_j\|_\infty^{q-2}   2^{(q-1)/2}q!   \times A_2\times \left(2\|m\|_\infty+s\right)^{q-2} \\
&\leq &\frac{q!}{2}\times A_2\times \left ( 2^{\frac{3q-1}{2(q-2)}}\|T_j\|_\infty\left(2\|m\|_\infty+s\right) \right  )^{q-2},
\end{eqnarray*}
The function $ \frac{3q-1}{2(q-2)}$ is decreasing in $q$. Hence for any $q\geq 3$, $2^{\frac{3q-1}{2(q-2)}}\leq 16$.\\
Thus
\begin{equation}\label{controlA_q}
 A_q\leq \frac{q!}{2}\times A_2\times  {c_j }^{q-2},
 \end{equation}
with
$$c_j:=  16\|T_j\|_\infty\left(2\|m\|_\infty+s\right).$$
We can now apply Proposition 2.9 of Massart (2007). We denote $f_W$ the density of the $W_\el$'s. We have
\begin{eqnarray*}
\E[T_j^2(W_1)]&=&\int T_j^2(w)f_W(w)dw\\
&\leq&\|f_X\|_\infty \|T_j\|_2^2,
\end{eqnarray*}
since  the density $f_W$ is the convolution of $f_X$ and $g$, $\|f_W\|_\infty=\| f_X\star g\|_\infty\leq \|f_X\|_\infty$.
 We have 
\begin{eqnarray*}
\var(m(X_1)T_j(W_1))&\leq &\E[m^2(X_1)T_j^2(W_1)]\\
&\leq&\|m\|_\infty^2\int T_j^2(w)f_W(w)dw\\
&\leq&\|m\|_\infty^2\|f_X\|_\infty\|T_j\|_2^2.
\end{eqnarray*}

Therefore, with
\begin{equation}\label{sigma}
\sigma_j^2=\frac{A_2}{n}=\var(Y_1T_j(W_1)),
\end{equation}
\begin{eqnarray}
\sigma_j^2&=& \sigma^2_\e\E[T_j^2(W_1)]+\var(m(X_1)T_j(W_1)) \label{eqsigma_j}  \\
&\leq&\sigma^2_\e\|f_X\|_\infty \|T_j\|_2^2+ \|m\|_\infty^2\|f_X\|_\infty\|T_j\|_2^2 \nonumber \\
&\leq&\|f_X\|_\infty \|T_j\|_2^2(\sigma^2_\e+\|m\|_\infty^2). \nonumber \\
\end{eqnarray}
We conclude that for any $u>0$,
\begin{equation}\label{conc1}
\P\left(|\hat p_j(x)-p_j(x)|\geq \sqrt{\frac{2\sigma_j^2u}{n}}+\frac{c_ju}{n}\right)\leq 2e^{-u}.
\end{equation}
Now, we can write 
\begin{eqnarray*}
\hat\sigma^2_j&=&\frac{1}{n(n-1)}\sum_{\el=2}^n\sum_{\vv=1}^{\el-1}(U_j(Y_\el,W_\el)-U_j(Y_\vv,W_\vv))^2\\
&=&\frac{1}{n(n-1)}\sum_{\el=2}^n\sum_{\vv=1}^{\el-1}(U_j(Y_\el,W_\el)-\E[U_j(Y_\el,W_\el)]-U_j(Y_{\vv},W_\vv)+ \E[U_j(Y_\vv,W_\vv)])^2\\
&=&s_j^2-\frac{2}{n(n-1)}\xi_j,
\end{eqnarray*} 
with 
\begin{eqnarray*}
s_j^2&:=&\frac{1}{n(n-1)}\sum_{\el=2}^n\sum_{\vv=1}^{\el-1}(U_j(Y_\el,W_\el)-\E[U_j(Y_\el,W_\el)])^2+(U_j(Y_\vv,W_\vv)- \E[U_j(Y_\vv,W_\vv)])^2 \\
&=&\frac{1}{n}\sum_{\el=1}^n(U_j(Y_\el,W_\el)-\E[U_j(Y_\el,W_\el)])^2
\end{eqnarray*}
and 
$$\xi_j:=\sum_{\el=2}^n\sum_{\vv=1}^{\el-1}(U_j(Y_\el,W_\el)-\E[U_j(Y_\el,W_\el)])\times(U_j(Y_\vv,W_\vv)-\E[U_j(Y_\vv,W_\vv)]).$$
In the sequel, we denote for any $\tilde{\gamma}>0$,
$$\Omega_n(\tilde{\gamma})=\left\{\max_{1\leq \el \leq n}|\e_\el|\leq s\sqrt{2\tilde{\gamma}\log n}\right\}.$$
We have that
\begin{equation} \label{ProbaComplementaireOmega_n}
\P(\Omega_n(\tilde{\gamma})^c) \leq n^{1-\tilde \gamma}.
\end{equation}

Note that on $\Omega_n(\tilde{\gamma})$,
$$\|U_j(\cdot,\cdot)\|_\infty\leq C_j,$$
we recall that
$$C_j=(\|m\|_\infty+s\sqrt{2\tilde{\gamma}\log n})\|T_j\|_\infty.$$
\begin{lemma}\label{conc-sj}
For any $\tilde{\gamma}>1$ and any $u>0$, there exists a sequence $e_{n,j} >0$ such that $\limsup_j e_{n,j}=0$ and 
$$\P\left(\left.\sigma_j^2\geq s_j^2+2C_j\sigma_j\sqrt{\frac{2u(1+e_{n,j})}{n}}+\frac{\sigma_j^2 u}{3n}\right|\Omega_n(\tilde{\gamma})\right)\leq e^{-u}.$$
\end{lemma}
\begin{proof}

We denote
$$\P_{\Omega_n(\tilde{\gamma})}(\cdot)=\P\left(\cdot |\Omega_n(\tilde{\gamma})\right), \quad \E_{\Omega_n(\tilde{\gamma})}(\cdot)=\E\left(\cdot |\Omega_n(\tilde{\gamma})\right).$$

Note that conditionally to $\Omega_n(\tilde{\gamma})$ the variables $U_j(Y_1,W_1),\ldots,U_j(Y_n,W_n)$ are independent. So, we can apply the classical Bernstein inequality to the variables
$$V_\el:=\frac{\sigma_j^2-(U_j(Y_\el,W_\el)-\E[U_j(Y_\el,W_\el)])^2}{n}\leq \frac{\sigma_j^2}{n}.$$

Furthermore, as 
\begin{eqnarray}
\E_{\Omega_n(\tilde{\gamma})}[U_j(Y_1,W_1)]&=&\E[m(X_1)T_j(W_1)|\Omega_n(\tilde{\gamma})]+\E[\eps_1T_j(W_1)|\Omega_n(\tilde{\gamma})] \nonumber\\
&=&\E[m(X_1)T_j(W_1)]\nonumber\\
&=&\E[U_j(Y_1,W_1)] \label{esperancecondi}
\end{eqnarray}
we get
\begin{eqnarray*}
\sum_{\el=1}^n\E_{\Omega_n(\tilde{\gamma})}[V_\el^2]&=&\frac{\E_{\Omega_n(\tilde{\gamma})}\left[\left(\sigma_j^2-\left(U_j(Y_1,W_1)-\E[U_j(Y_1,W_1)]\right)^2\right)^2\right]}{n}\\
&=&\frac{\sigma_j^4+\E_{\Omega_n(\tilde{\gamma})}\left[\left(U_j(Y_1,W_1)-\E[U_j(Y_1,W_1)]\right)^4\right]-2\sigma_j^2\E_{\Omega_n(\tilde{\gamma})}\left[\left(U_j(Y_1,W_1)-\E[U_j(Y_1,W_1)]\right)^2\right]}{n}\\
&\leq&\frac{\sigma_j^4+(4C_j^2-2\sigma_j^2)\E_{\Omega_n(\tilde{\gamma})}\left[\left(U_j(Y_1,W_1)-\E[U_j(Y_1,W_1)]\right)^2\right] }{n}.\\
\end{eqnarray*}

We shall find an upperbound for $\E_{\Omega_n(\tilde{\gamma})}\left[(U_j(Y_1,W_1)-\E[U_j(Y_1,W_1)])^2\right]$: 
\begin{eqnarray*}
\E_{\Omega_n(\tilde{\gamma})}\left[(U_j(Y_1,W_1)-\E[U_j(Y_1,W_1)])^2\right]&=&\var(m(X_1)T_j(W_1))+\E[\eps_1^2T_j^2(W_1)|\Omega_n(\tilde{\gamma})]\\
&=&\var(m(X_1)T_j(W_1))+\E[T_j^2(W_1)]\frac{\E[\eps_1^2\mathds{1}_{\Omega_n(\tilde{\gamma})}]}{\P(\Omega_n(\tilde{\gamma}))} \\
&\leq& \var(m(X_1)T_j(W_1))+\E[T_j^2(W_1)] \frac{s^2}{ \P(\Omega_n(\tilde{\gamma})) } \\
&\leq & \var(m(X_1)T_j(W_1))+\E[T_j^2(W_1)] \frac{s^2}{1-n^{1-\tilde \gamma }} \\
&=& \var(m(X_1)T_j(W_1))+ \E[T_j^2(W_1)] s^2(1+\tilde{ e}_n),
\end{eqnarray*}
where $\tilde{e}_n=n^{1-\tilde \gamma}+ o( n^{1-\tilde \gamma}) $. Using (\ref{eqsigma_j}) we have
\begin{equation}\label{esperanceconditU_j}
\E_{\Omega_n(\tilde{\gamma})}\left[(U_j(Y_1,W_1)-\E[U_j(Y_1,W_1)])^2\right] \leq (1+e_{n,j})\sigma_j^2,
\end{equation}
where $(e_{n,j})$ is a sequence such that $\limsup_j e_{n,j}= 0$.

Now let us find a lower bound for $\E_{\Omega_n(\tilde{\gamma})}\left[(U_j(Y_1,W_1)-\E[U_j(Y_1,W_1)])^2\right]$ :
\begin{eqnarray*}
\E_{\Omega_n(\tilde{\gamma})}\left[(U_j(Y_1,W_1)-\E[U_j(Y_1,W_1)])^2\right]&=& \var(m(X_1)T_j(W_1))+\E[T_j^2(W_1)]\frac{\E[\eps_1^2\mathds{1}_{\Omega_n(\tilde{\gamma})}]}{\P(\Omega_n(\tilde{\gamma}))} \\
&\geq& \var(m(X_1)T_j(W_1))+\E[T_j^2(W_1)]  \E[\eps_1^2\mathds{1}_{\Omega_n(\tilde{\gamma})}] \\
&=&  \var(m(X_1)T_j(W_1))+\E[T_j^2(W_1)]  \E[\eps_1^2(1-\mathds{1}_{\Omega_n^c(\tilde{\gamma})})]   \\
&= &  \sigma_j^2 -\E[T_j^2(W_1)]  \E[\eps_1^2\mathds{1}_{\Omega_n^c(\tilde{\gamma})}] .  
\end{eqnarray*}
Now using  Cauchy Scharwz, (\ref{MajorationMomentGaussienne}) and (\ref{ProbaComplementaireOmega_n}) we have
\begin{eqnarray}\label{infenj}
\E_{\Omega_n(\tilde{\gamma})}\left[(U_j(Y_1,W_1)-\E[U_j(Y_1,W_1)])^2\right]&\geq &  \sigma_j^2 - \E[T_j^2(W_1)] ( \E[\eps_1^4])^{\frac 1 2}( \P(\Omega_n^c(\tilde{\gamma})))^{\frac 1 2 }\nonumber\\
&\geq &  \sigma_j^2 -C s^2 \E[T_j^2(W_1)]n^{\frac{1-\tilde \gamma}{2}}\nonumber\\
&=& \sigma_j^2(1+\tilde{e}_{n,j}),
\end{eqnarray}
where $ (\tilde{e}_{n,j})$ is a sequence such that $\limsup_j  \tilde{e}_{n,j} = 0$.

Finally, using the bounds we just got for $\E_{\Omega_n(\tilde{\gamma})}\left[(U_j(Y_1,W_1)-\E[U_j(Y_1,W_1)])^2\right]$ yields
\begin{eqnarray*}
\sum_{\el=1}^n\E_{\Omega_n(\tilde{\gamma})}[V_\el ^2]&\leq& \frac{\sigma_j^4+ 4C_j^2\sigma_j^2(1+e_{n,j})-2\sigma_j^4(1+\tilde{e}_{n,j}) }{n} \\
&\leq & \frac{4C_j^2\sigma_j^2(1+e_{n,j} )-\sigma_j^4(1+2 \tilde{e}_{n,j})}{n} \\
&\leq&  \frac{4C_j^2\sigma_j^2(1+e_{n,j} )}{n}.
\end{eqnarray*}

We obtain the claimed result.
\end{proof}
Now, we deal with $\xi_j.$
\begin{lemma}\label{conc-xi}
There exists an absolute constant $c>0$ such that for any $u>1$,
 $$\P\left(\left. \xi_j\geq c(n\sigma_j^2u+C_j^2u^2)\right|\Omega_n(\tilde{\gamma})\right)\leq 3 e^{-u}.$$
\end{lemma} 
\begin{proof}  Note that conditionally to $\Omega_n(\tilde{\gamma})$, the vectors $(Y_\el,W_\el)_{1\leq \el \leq n}$ are independent. We remind that by  (\ref{esperancecondi}), (\ref{esperanceconditU_j}) and \eqref{infenj} we have
\begin{equation}
\E_{\Omega_n(\tilde{\gamma})}[U_j(Y_1,W_1)]=\E[U_j(Y_1,W_1)]
\end{equation}
and
$$
\E_{\Omega_n(\tilde{\gamma})}\left[(U_j(Y_1,W_1)-\E[U_j(Y_1,W_1)])^2\right]=(1+ e_{n,j})\sigma_j^2.
$$
The $\xi_j$  can be written as
$$\xi_j=\sum_{\el=2}^n\sum_{\vv=1}^{\el-1}g_j(Y_\el,W_\el, Y_\vv,W_\vv),$$
with $$g_j(y,w,y',w')=(U_j(y,w)-\E[U_j(Y_1,W_1)]))\times(U_j(y',w')-\E[U_j(Y_1,W_1)]).$$ Previous computations show that conditions (2.3) and (2.4) of Houdr\'e and Reynaud-Bouret (2005) are satisfied. So that we are able to apply Theorem 3.1 of Houdr\'e and Reynaud-Bouret (2005): there exist absolute constants $c_1$, $c_2$, $c_3$ and $c_4$ such that for any $u>0$,
$$\P_{\Omega_n(\tilde{\gamma})}\left( \xi_j\geq c_1C\sqrt{u}+c_2Du+c_3Bu^{3/2}+c_4Au^2\right)\leq 3 e^{-u},$$
where $A,$ $B$, $C$, and $D$ are defined and controlled as follows.
We have:
$$A=\|g_j\|_\infty\leq 4C_j^2.$$
$$C^2=\sum_{\el=2}^n\sum_{\vv=1}^{\el-1}\E_{\Omega_n(\tilde{\gamma})} [g_j^2(Y_\el,W_\el, Y_\vv,W_\vv)]=\frac{n(n-1)}{2}\sigma_j^4(1+ e_{n,j})^2.$$
Let $${\mathcal A}=\left\{(a_\el)_\el, (b_\vv)_\vv: \quad  \E_{\Omega_n(\tilde{\gamma})}\left[\sum_{\el=2}^na_\el^2(Y_\el,W_\el)\right]\leq 1, \ \E_{\Omega_n(\tilde{\gamma})}\left[\sum_{\el=1}^{n-1}b_\el^2(Y_\el,W_\el)\right]\leq 1\right\}.$$
We have:
\begin{eqnarray*}
D&=&\sup_{(a_\el)_\el, (b_\vv)_\vv \in{\mathcal A}}\E_{\Omega_n(\tilde{\gamma})}\left[\sum_{\el=2}^n\sum_{\vv=1}^{\el-1}g_j(Y_\el,W_\el, Y_\vv,W_\vv)a_\el(Y_\el,W_\el)b_\vv(Y_\vv,W_\vv)\right]\\
&=&\sup_{(a_\el)_\el, (b_\vv)_\vv \in{\mathcal A}}\left[\sum_{\el=2}^n\sum_{\vv=1}^{\el-1}\E_{\Omega_n(\tilde{\gamma})}\left[(U_j(Y_\el,W_\el)-[U_j(Y_\el,W_\el)]))a_\el(Y_\el,W_\el)\right]\right.\\
&&\hspace{3cm}\times\left.\E_{\Omega_n(\tilde{\gamma})}\left[(U_j(Y_\vv,W_\vv)-\E[U_j(Y_\vv,W_\vv)]))b_\vv(Y_\vv,W_\vv)\right]\right]\\
&\leq&\sup_{(a_\el)_\el, (b_\vv)_\vv \in{\mathcal A}}\sum_{\el=2}^n\sum_{\vv=1}^{\el-1}\sigma_j^2(1+e_{n,j})\sqrt{\E_{\Omega_n(\tilde{\gamma})}[a_\el^2(Y_\el,W_\el)]\E_{\Omega_n(\tilde{\gamma})}[b_\vv^2(Y_\vv,W_\vv)]}\\
&\leq&\sigma_j^2(1+e_{n,j})\sup_{(a_\el)_\el, (b_\vv)_\vv \in{\mathcal A}}\sum_{\el=2}^n\sqrt{\el-1}\sqrt{\E_{\Omega_n(\tilde{\gamma})}[a_\el^2(Y_\el,W_\el)]\sum_{\vv=1}^{\el-1}\E_{\Omega_n(\tilde{\gamma})}[b_\vv^2(Y_\vv,W_\vv)]}\\
&\leq&\sigma_j^2(1+ e_{n,j})\sqrt{\frac{n(n-1)}{2}}.
\end{eqnarray*}
Finally,
\begin{eqnarray*}
B^2&=&\sup_{y,w}\sum_{\vv=1}^{n-1}\E_{\Omega_n(\tilde{\gamma})}\left[(U_j(y,w)-\E[U_j(Y_1,W_1)]))^2\times(U_j(Y_\vv,W_\vv)-\E[U_j(Y_1,W_1)])^2\right]\\
&\leq&4(n-1)C_j^2\sigma_j^2(1+ e_{n,j} ).
\end{eqnarray*}
Therefore, there exists an absolute constant $c>0$ such that for any $u>1$,
$$c_1C\sqrt{u}+c_2Du+c_3Bu^{3/2}+c_4Au^2\leq c(n\sigma_j^2u+C_j^2u^2).$$
\end{proof}
Let us go back to the proof of Proposition \ref{Th:conc}.  We apply Lemmas \ref{conc-sj} and \ref{conc-xi} with $u>1$ and we obtain, by setting
$$M_j(u)=\hat\sigma_j^2+2C_j\sigma_j\sqrt{\frac{2u(1+ e_{n,j})}{n}}+\frac{\sigma_j^2 u}{3n}+\frac{2c(n\sigma_j^2u+C_j^2u^2)}{n(n-1)},$$
\begin{eqnarray*}
\P\left(\sigma_j^2\geq M_j(u)\right)&\leq&\P\left(\sigma_j^2\geq s_j^2-\frac{2}{n(n-1)}\xi_j+2C_j\sigma_j\sqrt{\frac{2u (1+e_{n,j})}{n}}+\frac{\sigma_j^2 u}{3n}+\frac{2c(n\sigma_j^2u+C_j^2u^2)}{n(n-1)}\right)\\
&\leq&
\P\left(\left.\sigma_j^2\geq s_j^2+2C_j\sigma_j\sqrt{\frac{2u (1+e_{n,j})}{n}}+\frac{\sigma_j^2 u}{3n}\right|\Omega_n(\tilde{\gamma})\right)\\
&&\hspace{1cm}+\P\left(\left. \xi_j\geq c(n\sigma_j^2u+C_j^2u^2)\right|\Omega_n(\tilde{\gamma})\right)+1-\P(\Omega_n(\tilde{\gamma})).
\end{eqnarray*}
Therefore, with $u=\tilde{\gamma}\log n$ and $\tilde{\gamma}>1$, we obtain for $n$ large enough:
$$\P\left(\sigma_j^2\geq M_j(\tilde{\gamma}\log n)\right)\leq 5n^{-\tilde{\gamma}}.$$
And there exist $a$ and $b$ two absolute constants such that
$$\P\left(\sigma_j^2\geq \hat\sigma_j^2+2C_j\sigma_j\sqrt{\frac{2\tilde{\gamma}\log n (1+e_{n,j})}{n}}+ \frac{\sigma_j^2 a\tilde{\gamma}\log n}{n}+\frac{C_j^2b^2\tilde{\gamma}^2\log^2n}{n^2}\right)\leq 5n^{-\tilde{\gamma}}.$$ 
Now, we set 
$$\theta_1=\left(1-\frac{ a\tilde\gamma\log n}{n}\right), \quad\theta_2=C_j\sqrt{\frac{2\tilde \gamma\log n (1+ e_{n,j})}{n}},\quad \theta_3=\hat\sigma_j^2+\frac{C_j^2b^2\tilde\gamma^2\log^2n}{n^2}$$
so
$$\P\left(\theta_1\sigma_j^2-2\theta_2\sigma_j-\theta_3\geq 0\right)\leq 5n^{-\tilde \gamma}.$$
We study the polynomial
$$p(\sigma)=\theta_1 \sigma^2-2\theta_2\sigma-\theta_3.$$
Since $\sigma\geq 0$, $p(\sigma)\geq 0$ means that
$$\sigma \geq  \frac{1}{\theta_1}\left(\theta_2+\sqrt{\theta_2^2+\theta_1\theta_3}\right),$$
which is equivalent to
$$\sigma^2 \geq \frac{1}{\theta_1^2}\left(2\theta_2^2+\theta_1\theta_3+2\theta_2\sqrt{\theta_2^2+\theta_1\theta_3}\right).$$
Hence
$$\P\left(\sigma^2_j \geq \frac{1}{\theta_1^2}\left(2\theta_2^2+\theta_1\theta_3+2\theta_2\sqrt{\theta_2^2+\theta_1\theta_3}\right)\right)\leq 5n^{-\tilde \gamma}.$$
So,
$$\P\left(\sigma^2_j \geq \frac{\theta_3}{\theta_1}+\frac{2\theta_2\sqrt{\theta_3}}{\theta_1\sqrt{\theta_1}}+\frac{4\theta_2^2}{\theta_1^2}\right)\leq 5n^{-\tilde \gamma}.$$
So, there exist  absolute constants $\delta$, $\eta,$ and $\tau'$  depending only on $\tilde \gamma$ so that for $n$ large enough,
\begin{small}
$$
 \P\left(\sigma^2_j \geq \hat\sigma^2_j\left(1+\delta \frac{\log n}{n}\right)+ \left(1+\eta\frac{\log n}{n}\right)2C_j\sqrt{2\tilde \gamma\hat\sigma_j^2 (1+e_{n,j})\frac{\log n}{n}}+8\tilde \gamma C_j^2 \frac{\log n}{n}\left(1+\tau'\left(\frac{\log n}{n}\right)^{1/2}\right)\right) \leq 5 n^{-\tilde \gamma}.
$$
\end{small}
Finally, for all $\tilde\e>0$ there exists $R_4$ depending on $\e'$ and $\tilde \gamma$ such that for $n$ large enough 
$$\P(\sigma^2_j\geq (1+\e')\tilde\sigma^2_{j,\tilde\gamma})\leq R_4 n^{-\tilde \gamma}.$$
Combining this inequality with (\ref{conc1}), we obtain the desired result of Proposition \ref{Th:conc}.

\end{proof}

Proposition \ref{BorneMajorant} shows that the residual term in the oracle inequality is negligible.  

\begin{Prop}\label{BorneMajorant}
We have for any $q\geq 1$,
\begin{equation}
 \E \left [\sup_{j\in J} \left (  \left |  \hat{p}_j(x)-p_j(x) \right | -  \Gamma_{\gamma}(j) \right )_{+}^q \right ] = {o}(n^{-q}).
\end{equation}
\end{Prop}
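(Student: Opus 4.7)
The plan is to start from the trivial bound $\sup_{j\in J}(\cdot)_+^q\leq \sum_{j\in J}(\cdot)_+^q$ and note that the constraint $2^{S_j}\leq n/\log^2 n$ forces $S_j\leq \log_2 n$, so $|J|=O((\log n)^d)$ is polylogarithmic. It therefore suffices to show, for each fixed $j\in J$, that $\E[(|\hat p_j(x)-p_j(x)|-\Gamma_\gamma(j))_+^q]$ is $o(n^{-q})$ with enough slack to absorb the $|J|$ factor. For a fixed $j$, I would split according to the good event $\Omega_j:=\{\sigma_j^2\leq (1+\e)\tilde\sigma_{j,\tilde\gamma}^2\}$, whose complement satisfies $\P(\Omega_j^c)\leq R_4 n^{-\tilde\gamma}$ by the second part of Proposition~\ref{Th:conc}. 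The key observation is that on $\Omega_j$ one has the deterministic lower bound
\[
\Gamma_\gamma(j)\geq \sqrt{\frac{2\gamma\sigma_j^2\log n}{n}}+\frac{c_j\gamma\log n}{n},
\]
which now involves only the deterministic variance proxy $\sigma_j^2$ to which the Bernstein-type inequality of Proposition~\ref{Th:conc} directly applies.

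On $\Omega_j$, I would write the layer-cake formula $\E[X_+^q]=\int_0^\infty q t^{q-1}\P(X>t)\,dt$ and, for each fixed $t>0$, invoke Proposition~\ref{Th:conc} with $u=\gamma\log n+u(t)$, where $u(t)$ solves $\sqrt{2\sigma_j^2 u(t)/n}+c_j u(t)/n=t$. Subadditivity of $u\mapsto\sqrt{2\sigma_j^2u/n}+c_ju/n$ (a sum of a square-root and a linear term) gives
\[
\P\bigl(|\hat p_j-p_j|\geq \Gamma_\gamma(j)+t,\ \Omega_j\bigr)\leq 2n^{-\gamma}e^{-u(t)},\qquad u(t)\geq \min\!\Bigl(\tfrac{nt^2}{8\sigma_j^2},\tfrac{nt}{2c_j}\Bigr).
\]
Integrating the two exponential regimes separately produces an $\Omega_j$-contribution of order $n^{-\gamma}\bigl((\sigma_j^2/n)^{q/2}+(c_j/n)^q\bigr)$. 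Lemma~\ref{ordredegrandeursigmajTj} provides $\sigma_j^2\lesssim 2^{S_j(2\nu+1)}$ and $c_j\lesssim 2^{S_j(\nu+1)}$; combined with $2^{S_j}\leq n$ this is bounded by $C n^{-\gamma+\nu q}$ up to logarithmic factors, which is $o(n^{-q})$ precisely under the assumption $\gamma>q(\nu+1)$. On $\Omega_j^c$ I would simply drop $\Gamma_\gamma(j)$ and use the deterministic bound $|\hat p_j(x)|\leq \|T_j\|_\infty\max_u|Y_u|$, together with $|p_j(x)|\leq C$, Cauchy--Schwarz, the Gaussian moment estimate $\E[\max_u|Y_u|^{2q}]=O((\log n)^q)$ and $\P(\Omega_j^c)\leq R_4 n^{-\tilde\gamma}$. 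Since $\|T_j\|_\infty^q\leq C\, 2^{S_j(\nu+1)q}\leq C n^{(\nu+1)q}$, this $\Omega_j^c$ contribution is at most $C n^{(\nu+1)q}(\log n)^{q/2}n^{-\tilde\gamma/2}$, which is $o(n^{-q})$ precisely under the assumption $\tilde\gamma>2q(\nu+2)$.

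The main obstacle is exactly what the construction of $\tilde\sigma_{j,\tilde\gamma}^2$ was designed to resolve: the Bernstein inequality of Proposition~\ref{Th:conc} controls $|\hat p_j-p_j|$ in terms of the deterministic variance $\sigma_j^2$, but the threshold $\Gamma_\gamma(j)$ actually used by the procedure is built from the random overestimate $\tilde\sigma_{j,\tilde\gamma}^2$. The good-event split through $\Omega_j$ lets one replace the random threshold by its deterministic counterpart at the price of a $n^{-\tilde\gamma}$-error, and the asymmetric conditions $\gamma>q(\nu+1)$ and $\tilde\gamma>2q(\nu+2)$ are exactly what is needed so that both the concentration tail on $\Omega_j$ and the crude moment bound on $\Omega_j^c$ are small enough after the worst-case $2^{S_j}\lesssim n$ blow-up of $\sigma_j^2$, $c_j$, and $\|T_j\|_\infty$.
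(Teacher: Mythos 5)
Your proposal is correct and follows essentially the same route as the paper's proof: a good-event split based on the second part of Proposition~\ref{Th:conc} so that $\Gamma_\gamma(j)$ dominates the deterministic Bernstein threshold built from $\sigma_j^2$, a layer-cake integration of the concentration bound of Proposition~\ref{Th:conc} on the good event, the bounds of Lemma~\ref{ordredegrandeursigmajTj} on $\sigma_j^2$, $c_j$, $\|T_j\|_\infty$, and a crude moment/Cauchy--Schwarz bound on the complement, yielding the same conditions $\gamma>q(\nu+1)$ and $\tilde\gamma>2q(\nu+2)$. The only differences (per-$j$ good event plus a polylogarithmic union bound over $J$, the explicit two-regime lower bound on $u(t)$ instead of the paper's change of variables with $h'(u)\leq h(u)/u$, and bounding $|p_j(x)|$ by a constant rather than $C\|p\|_2 2^{S_j/2}$) are immaterial.
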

\begin{proof}
We recall that  $J=\left \{ j\in \N^d:\quad 2^{\Sj}\leq\lfloor  {\frac{n}{\log^2n }} \rfloor \right \}$.\\
Let $\tilde \gamma >0$ and let us consider the event 
$$\tilde{\Omega}_{\tilde\gamma}=\left \{\sigma_j^2 \leq (1+\eps)\tilde\sigma_{j,\tilde\gamma}^2, \ \forall\, j\in J \right \}.$$ 
 Let $\gamma >0$. We set in the sequel
$$ E:=  \E \left [\sup_{j\in J} \left (  \left |  \hat{p}_j(x)-p_j(x) \right | -  \sqrt{\frac{2 \gamma (1+\e)\tilde\sigma_{j,\tilde\gamma}^2\log n}{n}}-\frac{c_j\gamma\log n}{n} \right )_{+}^q  \mathds{1}_{\tilde{\Omega}_{\tilde\gamma}}\right ],$$
and  $R_j:=  \left |  \hat{p}_j(x)-p_j(x) \right | $. We have:
\begin{eqnarray*}
E&=&\int_{0}^\infty \P \left [ \sup_{j\in J} \left  (  R_j  -   \sqrt{\frac{2\gamma(1+\e)\tilde\sigma_{j,\tilde\gamma}^2\log n}{n}}-\frac{c_j\gamma\log n}{n} \right )_+^q  \mathds{1}_{\tilde{\Omega}_{\tilde\gamma}} > y \right ] dy \\
& \leq & \sum_{j\in J} \int_{0}^\infty \P \left [ \left  (  R_j  -   \sqrt{\frac{2\gamma(1+\e)\tilde\sigma_{j,\tilde\gamma}^2\log n}{n}}-\frac{c_j\gamma\log n}{n} \right )_+^q  \mathds{1}_{\tilde{\Omega}_{\tilde\gamma}} > y \right ] dy  \\
&\leq & \sum_{j\in J} \int_{0}^\infty \P \left [ \left  (  R_j  -   \sqrt{\frac{2\gamma  \sigma_j^2\log n}{n}}-\frac{c_j\gamma\log n}{n} \right )^q > y \right ] dy.
\end{eqnarray*}
Let us take $u$ such that
$$ y=h(u)^q,$$
where
$$h(u)=\sqrt{\frac{2   \sigma_j^2 u}{n}}+ \frac{c_j u}{n}.$$
Note that for any $u>0$,
$$h'(u)\leq \frac{h(u)}{u}.$$ 
Hence 
\begin{eqnarray*}
E &  \leq & C \sum_{j\in J} \int_{0}^\infty \P \left [  R_j     >     \sqrt{\frac{2\gamma  \sigma_j^2\log n}{n}}+ \frac{c_j\gamma\log n}{n} +   \sqrt{\frac{2u  \sigma_j^2 }{n}}+ \frac{ u c_j }{n} \right ]  h(u)^{q-1}h'(u)  du \\
& \leq & C\sum_{j\in J} \int_{0}^\infty \P \left [  R_j     >     \sqrt{\frac{2\sigma_j^2 (\gamma \log n+u)}{n}}+ \frac{c_j(\gamma\log n+u)}{n}  \right ]  h(u)^{q-1}h'(u)  du .
\end{eqnarray*}
Now using concentration inequality (\ref{concentrationp_jsigma_j}), we get
\begin{eqnarray*}
 E &\leq &  C \sum_{j\in J} \int_{0}^\infty e^{-(\gamma \log n +u)}  h(u)^{q-1}h'(u)  du \\
  &\leq &  C \sum_{j\in J} \int_{0}^\infty e^{-(\gamma \log n +u)}  h(u)^{q}\frac 1 u   du \\
  &\leq &  C e^{-\gamma \log n}   \sum_{j\in J} \int_{0}^\infty e^{-u}  \left ( \sqrt{\frac{2   \sigma_j^2 u}{n}}+ \frac{c_j u}{n}  \right )^q \frac 1 u   du \\
  &\leq & C \left (e^{-\gamma \log n}   \sum_{j\in J}  \left  (  {\frac{\sigma_j^2 }{n}}  \right )^{q/2}   \int_{0}^\infty e^{-u}u^{\frac q 2 -1}  du +  \left (  \frac{c_j}{n}  \right )^q     \int_{0}^\infty e^{-u} u^{q-1} du  \right ).\\
\end{eqnarray*}
Now using Lemma \ref{ordredegrandeursigmajTj}, we have that $\sigma_j^2 \leq R_{10} 2^{\Sj(2\nu+1)}$ and $c_j \leq  C 2^{\Sj(\nu+1)}$. Hence, 
\begin{eqnarray*}
E &\leq& C \left ( e^{-\gamma \log n} \sum_{j\in J} \left ( {\frac{  2^{\Sj(2\nu+1)} }{n}}  \right )^{q/2}+ \left (\frac{ 2^{\Sj(\nu+1)} }{n} \right )^q  \right )\\
&\leq & C n^{-\gamma +q\nu}(\log n)^{-(2\nu+ 1 )q} = o(n^{-q}),
\end{eqnarray*}  as soon as $\gamma > q(\nu+1)$.


 It remains to find an upperbound for the following quantity: 

$$E':=  \E \left  [ \sup_{j\in J} \left (  \left |  \hat{p}_j(x)-p_j(x) \right | -  \sqrt{\frac{2\gamma(1+\e)\tilde\sigma_{j,\tilde\gamma}^2\log n}{n}}-\frac{c_j\gamma\log n}{n} \right )_{+}^q  \mathds{1}_{\tilde{\Omega}_{\tilde \gamma}^c} \right ] .$$

We have

\begin{eqnarray*}
E' &\leq&  \E\left [ \sup_{j\in J} \left (  |  \hat{p}_j(x)-p_j(x) \right | ^q  \mathds{1}_{\tilde{\Omega}_{\tilde \gamma}^c}\right ] \\
&\leq & 2^{q-1}\left (  \E \left [\sup_{j\in J} ( |\hat{p}_j(x)|)^q  \mathds{1}_{\tilde{\Omega}_{\tilde \gamma}^c} \right ] +  \E \left [ \sup_{j\in J}  ( |{p}_j(x)|)^q  \mathds{1}_{\tilde{\Omega}_{\tilde \gamma}^c} \right ] \right ) .
\end{eqnarray*}
First, let us deal with the term  $ \E \left [ \sup_{j\in J}  ( |{p}_j(x)|)^q  \mathds{1}_{\tilde{\Omega}_{\tilde \gamma}^c} \right ]$. 

Following the lines of the proof of Lemma \ref{Wavelets} we easily get that $  \sum_k \varphi^2_{jk}(x)  \leq C2^{\Sj}$, hence
\begin{eqnarray*} 
 |p_j(x) |&=& \left | \sum_k p_{jk} \varphi_{jk}(x)  \right | \leq \left  (\sum_k p_{jk}^2 \right )^{\frac 1 2}   \left  (\sum_k \varphi^2_{jk}(x) \right )^{\frac 1 2} \\
&\leq &C \| p\|_2 2^{\frac{ \Sj}{2}}.
\end{eqnarray*}
Now using Proposition \ref{Th:conc} which states that 
$  \P(\tilde{\Omega}_{\tilde \gamma}^c) \leq C n^{-\tilde \gamma}$
\begin{eqnarray}
\E \left [ \sup_{j\in J}  ( |{p}_j(x)|)^q  \mathds{1}_{\tilde{\Omega}_{\tilde \gamma}^c} \right ] &\leq &   \sup_{j\in J} (\| p\|_2 2^{\frac{ \Sj}{2}})^q \P(\tilde{\Omega}_{\tilde \gamma}^c) \\
&\leq &C \left ({\frac{n}{\log^2 n }} \right )^{\frac q 2} n^{-\tilde \gamma }.
\end{eqnarray}

It remains to find an upperbound for $   \E \left [\sup_{j\in J} ( |\hat{p}_j(x)|)^q  \mathds{1}_{\tilde{\Omega}_{\tilde \gamma}^c} \right ] $. We have
\begin{eqnarray*}
\E \left [\sup_{j\in J} ( |\hat{p}_j(x)|)^q  \mathds{1}_{\tilde{\Omega}_{\tilde \gamma}^c} \right ] &=& \E \left [\sup_{j\in J} \left |  \frac 1 n \sum_{\el =1}^n Y_\el T_j(W_\el )   \right |^q  \mathds{1}_{\tilde{\Omega}_{\tilde \gamma}^c} \right ] \\
&\leq & \frac{1}{n^q} \E  \left [ \sup_{j\in J}  \left (\sum_{\el =1}^n  \left | m(X_\el) + \eps_\el \right |  |T_j(W_\el ) |\right )^q \mathds{1}_{\tilde{\Omega}_{\tilde \gamma}^c} \right ]\\
&\leq & \frac{n^{q-1}}{n^q} \E  \left [ \sup_{j\in J}  \sum_{\el =1}^n  \left | m(X_\el) + \eps_\el  \right |^q  |T_j(W_\el) |^q \mathds{1}_{\tilde{\Omega}_{\tilde \gamma}^c} \right ]\\
&\leq & \frac{C}{n}   \E  \left [ \sup_{j\in J}  \sum_{\el =1}^n  \left (\| m \|_{\infty}^q+ |\eps_\el  \right |^q ) |T_j(W_\el ) |^q \mathds{1}_{\tilde{\Omega}_{\tilde \gamma}^c} \right ]\\
&\leq& C \left( \sup_{j\in J}( \|T_j \|_{\infty}^q ) \P(\tilde{\Omega}_{\tilde \gamma}^c) + \sup_{j\in J}( \|T_j \|_{\infty}^q ) \E \left [  |\eps_1|^q  \mathds{1}_{\tilde{\Omega}_{\tilde \gamma}^c}\right ] \right )\\
&\leq & C  \left (\sup_{j\in J}( \|T_j \|_{\infty}^q ) \P(\tilde{\Omega}_{\tilde \gamma}^c) +  \sigma_{\eps}^q\sup_{j\in J}( \|T_j \|_{\infty}^q ) \left(\E \left [  |Z |^{2q} \right ]\right)^{\frac 1 2 } \left (\P( {\tilde{\Omega}_{\tilde \gamma}^c}) \right )^{\frac 1 2 } \right ),
\end{eqnarray*}
where $Z\sim \mathcal{N}(0,1).$ Using (\ref{MajorationMomentGaussienne}) and $\| T_j\|_{\infty} \leq T_4 2^{\Sj(\nu+1)}$ , we get
\begin{eqnarray*}
\E \left [\sup_{j\in J} ( |\hat{p}_j(x)|)^q  \mathds{1}_{\tilde{\Omega}_{\tilde \gamma}^c} \right ]  & \leq &  C \left ( \frac{n}{\log^2 n} \right )^{(\nu+1)q} n^{-\frac{\tilde \gamma  }{2}},
\end{eqnarray*}

 We have
\begin{eqnarray*}
E' &\leq & C    n^{-\frac{\tilde \gamma }{2}} \left ( \left  ( {\frac{n}{\log^2 n }} \right )^{\frac q 2}   +  \left (\frac{n}{\log^2 n} \right )^{(\nu+1)q}   \right ) \\
&=&o(n^{-q}),
\end{eqnarray*}
as soon as $\tilde\gamma >2q(\nu+2) $. This ends the proof of Proposition \ref{BorneMajorant}.

\end{proof}

Proposition \ref{PropBiais} controls the bias term in the oracle inequality. 

\begin{Prop}\label{PropBiais}
For any $j=(j_1,\ldots,j_d)\in\Z^d$ and $j'=(j'_1,\ldots,j'_d)\in\Z^d$ and any $x$, if $p\in\mathbb{H}_d(\vec{\beta},L)$
$$|p_{j\wedge j'}(x)-p_{j'}(x)|\leq R_{12} L\sum_{\el =1}^d 2^{-j_\el \beta_\el },$$
where $R_{12}$ is a constant only depending on $\varphi$ and $\vec{\beta}$. We have denoted $$j\wedge j'=(j_1\wedge j'_1,\ldots, j_d\wedge j'_d).$$
\end{Prop}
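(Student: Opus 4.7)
The plan is to exploit the tensor-product structure of the projection kernel and reduce the problem to $d$ univariate bias estimates by telescoping. First, I would factor
$$K_j(x,y)=\prod_{\ell=1}^d K^{(1)}_{j_\ell}(x_\ell,y_\ell),\qquad K^{(1)}_k(u,v):=\sum_{m\in\Z}2^{k}\varphi(2^{k}u-m)\varphi(2^{k}v-m),$$
so that $p_\eta(x)=\int K_\eta(x,y)p(y)\,dy$ is a composition of univariate projections at resolutions $\eta_1,\ldots,\eta_d$.

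Second, I would introduce a chain from $j\wedge j'$ to $j'$. Setting $\eta^{(0)}=j\wedge j'$ and obtaining $\eta^{(\ell)}$ from $\eta^{(\ell-1)}$ by replacing its $\ell$-th coordinate by $j'_\ell$, one has $\eta^{(d)}=j'$ and
$$p_{j'}(x)-p_{j\wedge j'}(x)=\sum_{\ell=1}^d\bigl(p_{\eta^{(\ell)}}(x)-p_{\eta^{(\ell-1)}}(x)\bigr).$$
Coordinates with $j_\ell\ge j'_\ell$ contribute nothing (both indices share $j'_\ell$ in slot $\ell$); only those with $j_\ell<j'_\ell$ matter. For such an $\ell$, the $\ell$-th summand equals
$$\int\Big(\prod_{i\neq\ell}K^{(1)}_{\eta^{(\ell)}_i}(x_i,y_i)\Big)\bigl(K^{(1)}_{j'_\ell}(x_\ell,y_\ell)-K^{(1)}_{j_\ell}(x_\ell,y_\ell)\bigr)p(y)\,dy,$$
and I would perform the $y_\ell$-integration first for fixed $y_{-\ell}=(y_i)_{i\neq\ell}$.

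Third, I would invoke the standard univariate bias estimate along the $\ell$-th slice $y_\ell\mapsto p(y)$. The vanishing moment property (A2), after rescaling, reads $\int K^{(1)}_k(u,v)(v-u)^r\,dv=\delta_{0r}$ for $r=0,\ldots,N$, and the compact support (A1) confines $K^{(1)}_k(u,\cdot)$ to an interval of radius $O(2^{-k})$ about $u$. Combined with a Taylor expansion to order $\lfloor\beta_\ell\rfloor$ of the slice and its anisotropic Hölder control, this yields
$$\Big|\int K^{(1)}_k(x_\ell,y_\ell)p(y)\,dy_\ell-p(y_1,\ldots,y_{\ell-1},x_\ell,y_{\ell+1},\ldots,y_d)\Big|\le C L\,2^{-k\beta_\ell}$$
uniformly in $y_{-\ell}$, both for $k=j_\ell$ and for $k=j'_\ell$. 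Subtracting and using $j_\ell<j'_\ell$ bounds the inner $y_\ell$-integral by $2CL\,2^{-j_\ell\beta_\ell}$, independently of $y_{-\ell}$.

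Finally, the remaining integrations $\int|K^{(1)}_{\eta^{(\ell)}_i}(x_i,y_i)|\,dy_i$ are uniformly bounded by a constant depending only on $\varphi$: by (A1) at most $2A$ terms in the sum defining $K^{(1)}_k$ are nonzero at a fixed $u$, and a change of variables gives an $L^1$ bound by $\|\varphi\|_\infty\|\varphi\|_1$. Assembling, $|p_{\eta^{(\ell)}}(x)-p_{\eta^{(\ell-1)}}(x)|\le R\,L\,2^{-j_\ell\beta_\ell}$, and summing over $\ell$ proves the proposition with $R_{12}$ depending only on $\varphi$ and $\vec\beta$. I expect the main technical subtlety to be that in the telescoping step the slice in coordinate $\ell$ is integrated against the \emph{other} kernel factors, which are not sign-constant; the clean bound relies on the univariate bias estimate being uniform in $y_{-\ell}$ so that it can be pulled out of the $y_{-\ell}$ integral before bounding the remaining kernel factors by their absolute values.
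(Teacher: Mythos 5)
Your proof is correct, but it takes a genuinely different route from the paper's. The paper first proves the identity $K_{j'}(p_j)(x)=p_{j\wedge j'}(x)$ (its Lemma on projections), which exploits the nestedness of the multiresolution spaces coordinate-wise ($V_{j_\el}\subset V_{j'_\el}$ when $j_\el\leq j'_\el$); it then writes $p_{j\wedge j'}(x)-p_{j'}(x)=\int K_{j'}(x,y)\,(p_j(y)-p(y))\,dy$, bounds $|p_j(y)-p(y)|$ uniformly by $CL\sum_\el 2^{-j_\el\beta_\el}$ via its multivariate bias lemma (Taylor expansion plus vanishing moments, a lemma it needs anyway for the other half of $B(\eta)$), and finishes with $\int|K_{j'}(x,y)|dy\leq(\|\varphi\|_\infty\|\varphi\|_1(2A+1))^d$. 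You bypass the nested-space identity entirely: your coordinate-by-coordinate telescoping from $j\wedge j'$ to $j'$, combined with the univariate bias estimate applied to slices uniformly in $y_{-\el}$ (valid since the anisotropic H\"older condition is uniform in the other coordinates) and $L^1$ bounds on the one-dimensional kernel factors, yields the same bound; the inequality $2^{-j'_\el\beta_\el}\leq 2^{-j_\el\beta_\el}$ for $j_\el<j'_\el$ plays the role that the inclusion $V_{j_\el}\subset V_{j'_\el}$ plays in the paper. What your route buys is generality: it uses only the product form, compact support and vanishing moments of the kernel, not the fact that $K_j$ is an orthogonal projection, so it would carry over to non-projection product kernels; what the paper's route buys is economy, since it reuses the already-established multivariate bias lemma instead of redoing a Taylor/H\"older argument slice by slice. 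One shared implicit hypothesis you should state, as the paper does in its bias lemma: $\lfloor\beta_\el\rfloor\leq N$ for every $\el$, so that the vanishing moments annihilate the Taylor polynomial (harmless in the application, where $\vec\beta\in(0,1]^d$).
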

\begin{proof}
We first state three lemmas.

\begin{lemma}\label{SansBiais}
For any $j$ and any $k$, we have:
$$\E[\hat{p}_{jk}]=p_{jk}.$$
\end{lemma}
\begin{proof}
Recall that 
$$
\hat{p}_{jk}:=  \frac 1 n \sum_{u=1}^n Y_u \times (\mathcal{D}_j \varphi)_{j,k}(W_u)=2^{\frac \Sj  2} \frac 1 n \sum_{u=1}^n Y_u \int e^{-i<t,2^jW_u-k>}  \prod_{\el=1}^d
\frac{\overline{\mathcal{F}(\varphi)(t_\el)}}{\mathcal{F}(g_\el)(2^{j_\el}t_\el)} dt. $$
Let us prove now that $\E(\hat{p}_{jk})=p_{jk}.$\\
We have
$$\E(\hat{p}_{jk})= 2^{\frac \Sj 2}  \left ( \int \E(Y_1e^{-i<t, 2^jW_1-k>})  \prod_{\el=1}^d
\frac{\overline{\mathcal{F}(\varphi)(t_\el)}}{\mathcal{F}(g_\el)(2^{j_\el}t_\el)} dt \right  ).$$
We shall develop the right member of the last equality.
We have :
 \begin{eqnarray*}
 \E \left [Y_1e^{-i < t,2^jW_1-k>} \right ] &=& \E \left [(m(X_1)+\varepsilon_1 ) e^{-i <t,2^jW_1-k>} \right ]\\
 &=& \E \left [m(X_1)e^{-i<t,2^jW_1-k>} \right ]\\
 &=& \E \left  [m(X_1)e^{-i<t,2^jX_1-k>} \right ] \E \left [e^{-i <t,2^j \delta_1>} \right ]\\
 &=& \int m(x) e^{-i<t,2^jx-k> }f_X(x)dx\times \mathcal{F}(g)(2^{j}t)\\
 &=&  e^{i<t,k>}\mathcal{F}(p)(2^jt) \mathcal{F}(g)(2^jt).
 \end{eqnarray*}
 Consequently 
 \begin{eqnarray*}
\E \left [\hat{p}_{jk} \right ] &=& 2^{\frac \Sj 2}  \int e^{i<t,k>}  \mathcal{F}(p)(2^j t) \mathcal{F}(g)(2^jt)  \prod_{\el=1}^d
\frac{\overline{\mathcal{F}(\varphi)(t_\el)}}{\mathcal{F}(g_\el)(2^{j_\el}t_\el)} dt \\
&=&  2^{\frac \Sj 2} \int  e^{i<t,k>} \mathcal{F}(p)(2^jt) \prod_{\el=1}^d \overline{ \mathcal{F}(\varphi)(t_\el)}dt \\
&=& \int \mathcal{F}(p)(t)\overline{ \mathcal{F}(\varphi_{jk})(t) }dt.
 \end{eqnarray*}
Since by Parseval equality, we have 
$$p_{jk}= \int p(t)\varphi_{jk}(t) dt=  \int \mathcal{F}(p)(t) \overline{\mathcal{F}(\varphi_{jk})(t)} dt,$$
the result follows.

Note that in the case where we don't have any noise on the variable i.e $g(x)=\delta_0(x)$, since $\mathcal{F}(g)(t)=1$, the proof above remains valid and we get $\E[\hat{p}_{jk}]=p_{jk}$.

\end{proof}

\begin{lemma}\label{BiasGap:Gene}
If for any $\el $, $\lfloor \beta_\el \rfloor\leq N$, the following holds:  for any $j\in\Z^d$ and any $ p\in\mathbb{H}_d(\vec{\beta},L) $, 
$$
|\E[\hat p_j(x)]-p(x)|\leq L(\|\varphi\|_{\infty}\|\varphi\|_1)^d(2A+1)^d\sum_{\el =1}^d\frac{(2A\times 2^{-j_\el })^{\beta_\el }}{{\lfloor\beta_\el \rfloor !}}.
$$
\end{lemma}
\begin{proof}
Let $x$ be fixed and $j=(j_1,\ldots,j_d)\in\Z^d$. We have:
$$\int K_j(x,y)dy=\int\sum_{k_1}\cdots\sum_{k_d}\prod_{\el =1}^d[2^{j_\el }\varphi(2^{j_\el }x_\el -k_\el )\varphi(2^{j_\el }y_\el -k_\el )dy_\el ]=1.$$
Therefore, using lemma \ref{SansBiais}
\begin{eqnarray*}
\E[\hat{p}_j(x)]-p(x)= p_j(x)-p(x)&=&\int K_j(x,y)(p(y)-p(x))dy\\
&=&\sum_{k}\varphi_{jk}(x)\int\varphi_{jk}(y)(p(y)-p(x))dy\\
&=&\sum_{k_1\in \mathcal{Z}_{j,1}(x)}\cdots\sum_{k_d\in \mathcal{Z}_{j,d}(x)}\varphi_{jk}(x)\int\prod_{\el =1}^d2^{\frac{j_\el }{2}}\varphi(2^{j_\el }y_\el -k_\el )(p(y)-p(x))dy.
\end{eqnarray*}
Now, we use that
$$p(y)-p(x)= \sum_{\el =1}^d p(x_1,\ldots,x_{\el-1},y_\el ,y_{\el+1},\ldots,y_d)-p(x_1,\ldots x_{\el -1},x_\el ,y_{\el +1},\ldots,y_d),$$
with $p(x_1,\ldots,x_\el ,y_{\el +1},\ldots,y_d)=p(x_1,\ldots,x_d)$ if $\el=d$ and $p(x_1,\ldots,x_{\el -1},y_\el ,\ldots,y_d)=p(y_1,\ldots,y_d)$ if $\el =1$. Furthermore, the Taylor expansion gives: for any $\el \in\{1,\ldots,d\}$, for some $u_\el \in [0;1],$
\begin{eqnarray*}
\hspace{-1cm} p(x_1,\ldots,x_{\el -1},y_\el ,y_{\el +1},\ldots,y_d)-p(x_1,\ldots x_{\el -1},x_\el ,y_{\el +1},\ldots,y_d)=&&\\  \sum_{k=1}^{\lfloor \beta_\el\rfloor}\frac{\partial^k p}{\partial x_ \el^k}(x_1,\ldots x_{\el -1},x_\el ,y_{\el +1},\ldots,y_d)\times \frac{(y_\el-x_\el )^k}{k!}+ 
\\ \frac{\partial^{\lfloor \beta_\el \rfloor} p}{\partial x_\el ^{\lfloor \beta_\el \rfloor}}(x_1,\ldots x_{\el -1},x_\el +(y_\el -x_\el )u_\el,y_{\el +1},\ldots,y_d)\times \frac{(y_\el -x_\el )^{\lfloor \beta_\el \rfloor}}{\lfloor \beta_\el \rfloor!}\\-\frac{\partial^{\lfloor \beta_\el \rfloor} p}{\partial x_\el ^{\lfloor \beta_\el \rfloor}}(x_1,\ldots x_{\el -1},x_\el ,y_{\el+1},\ldots,y_d)\times \frac{(y_\el -x_\el )^{\lfloor \beta_\el \rfloor}}{\lfloor \beta_\el \rfloor!}&&.
\end{eqnarray*}
Using vanishing moments of $K$ and $p\in\mathbb{H}_d(\vec{\beta},L)$, we obtain:
\begin{eqnarray*}
|p_j(x)-p(x)|&\leq&\sum_{k_1\in \mathcal{Z}_{j,1}(x)}\cdots\sum_{k_d\in \mathcal{Z}_{j,d}(x)}|\varphi_{jk}(x)|\int\prod_{\el =1}^d2^{\frac{j_\el }{2}}|\varphi(2^{j_\el }y_\el -k_\el )|\sum_{\el=1}^d L\frac{|y_{\el}-x_{\ell}|^{\beta_{\el}}}{\lfloor\beta_{\el}\rfloor !}dy\\
&\leq&\|\varphi\|_{\infty}^d\sum_{k_1\in \mathcal{Z}_{j,1}(x)}\cdots\sum_{k_d\in \mathcal{Z}_{j,d}(x)}\int_{[-A;A]^d}\prod_{\el =1}^d|\varphi(u_\el )|\sum_{\el=1}^d L\frac{|2^{-j_{\el}}(u_{\el}+k_{\el})-x_{\el}|^{\beta_{\el}}}{\lfloor\beta_{\el}\rfloor !}du.
\end{eqnarray*}
Since  for any $\el$, $k_{\el}\in \mathcal{Z}_{j,\el}(x)$, we finally obtain
\begin{eqnarray*}
|p_j(x)-p(x)|&\leq&\|\varphi\|_{\infty}^d\sum_{k_1\in \mathcal{Z}_{j,1}(x)}\cdots\sum_{k_d\in \mathcal{Z}_{j,d}(x)}\int_{[-A;A]^d}\prod_{\el=1}^d|\varphi(u_\el )|\sum_{\el=1}^d L\frac{(2A\times 2^{-j_{\el}})^{\beta_{\el}}}{\lfloor\beta_{\el}\rfloor !}du\\
&\leq& L(\|\varphi\|_{\infty}\|\varphi\|_1)^d(2A+1)^d\sum_{\el=1}^d\frac{(2A\times 2^{-j_\el})^{\beta_\el}}{\lfloor\beta_{\el}\rfloor !}.
\end{eqnarray*}
\end{proof}
\begin{lemma}\label{projection}
We have for any $j=(j_1,\ldots,j_d)\in\Z^d$ and $j'=(j'_1,\ldots,j'_d)\in\Z^d$ and any $x$,
$$K_{j'}(p_j)(x)=p_{j\wedge j'}(x).$$
\end{lemma}

\begin{proof}
We only deal with the case $d=2$. The extension to the general case can be easily deduced. If for $i=1,2$, $j_i\leq j'_i$ the result is obvious. It is also the case if for $\el =1,2$, $j'_\el \leq j_\el $. So, without loss of generality, we assume that $j_1\leq j'_1$ and  $j'_2\leq j_2.$ We have:
\begin{eqnarray*}
K_{j'}(p_j)(x)&=&\int K_{j'}(x,y)p_j(y)dy\\
&=&\int \sum_{k}\varphi_{j'k}(x)\varphi_{j'k}(y)p_j(y)dy\\
&=&\int \sum_{k_1}\sum_{k_2}\varphi_{j'_1k_1}(x_1)\varphi_{j'_2k_2}(x_2)\varphi_{j'_1k_1}(y_1)\varphi_{j'_2k_2}(y_2)p_j(y)dy_1dy_2\\
&=&\int \sum_{k_1}\sum_{k_2}\varphi_{j'_1k_1}(x_1)\varphi_{j'_2k_2}(x_2)\varphi_{j'_1k_1}(y_1)\varphi_{j'_2k_2}(y_2)\\
&&\hspace{1cm}\times\sum_{\ell_1}\sum_{\ell_2}\varphi_{j_1\ell_1}(y_1)\varphi_{j_2\ell_2}(y_2)\varphi_{j_1\ell_1}(u_1)\varphi_{j_2\ell_2}(u_2)p(u_1,u_2)du_1du_2dy_1dy_2.
\end{eqnarray*}
Since $j_1\leq j'_1$, we have in the one-dimensional case, by a slight abuse of notation,  $V_{j_1}\subset V_{j'_1}$ and
$$\int \sum_{k_1}\varphi_{j'_1k_1}(x_1)\varphi_{j'_1k_1}(y_1)\varphi_{j_1\ell_1}(y_1)dy_1=\int K_{j'_1}(x_1,y_1)\varphi_{j_1\ell_1}(y_1)dy_1=\varphi_{j_1\ell_1}(x_1).$$
Similarly, since $j'_2\leq j_2$, we have $V_{j'_2}\subset V_{j_2}$ and
$$\int \sum_{\ell_2}\varphi_{j_2\ell_2}(y_2)\varphi_{j_2\ell_2}(u_2)\varphi_{j'_2k_2}(y_2)dy_2=\int K_{j_2}(u_2,y_2)\varphi_{j'_2k_2}(y_2)dy_2=\varphi_{j'_2k_2}(u_2).$$
Therefore, with $\tilde j=j\wedge j'$,
\begin{eqnarray*}
K_{j'}(p_j)(x)&=&\int \sum_{k_2}\sum_{\ell_1}\varphi_{j'_2k_2}(x_2)\varphi_{j_1\ell_1}(u_1)\varphi_{j_1\ell_1}(x_1)\varphi_{j'_2k_2}(u_2)p(u_1,u_2)du_1du_2\\
&=&\int \sum_{\ell_1}\sum_{\ell_2}\varphi_{\tilde j_2\ell_2}(x_2)\varphi_{\tilde j_1\ell_1}(u_1)\varphi_{\tilde j_1\ell_1}(x_1)\varphi_{\tilde j_2 \ell_2}(u_2)p(u_1,u_2)du_1du_2\\
&=&\int\sum_{\ell}\varphi_{\tilde j\ell}(x)\varphi_{\tilde j\ell}(u)p(u)du\\
&=&p_{\tilde j}(x),
\end{eqnarray*}
which ends the proof of the lemma.
\end{proof}
Now, we shall go back to the proof of Proposition \ref{PropBiais}. We easily deduce  the result :
\begin{eqnarray*}
p_{j\wedge j'}(x)-p_{j'}(x)&=&K_{j'}(p_j)(x)-K_{j'}(p)(x)\\
&=&\int K_{j'}(x,y)(p_j(y)-p(y))dy.
\end{eqnarray*}
Therefore,
\begin{eqnarray*}
|p_{j\wedge j'}(x)-p_{j'}(x)|&\leq&\int |K_{j'}(x,y)||p_j(y)-p(y)|dy\\
&\leq &R_{12} L\sum_{\el =1}^d 2^{-j_\el\beta_\el}\times \int |K_{j'}(x,y)|dy,
\end{eqnarray*}
where $R_{12}$ is a constant only depending on $\varphi$ and $\vec{\beta}$. 
We conclude by observing that
\begin{eqnarray*}
\int |K_{j'}(x,y)|dy&=&\int\sum_{k_1}\cdots\sum_{k_d}\prod_{\el =1}^d[2^{j'_\el }|\varphi(2^{j'_\el }x_\el -k_\el )||\varphi(2^{j'_\el }y_\el -k_\el )|dy_i]\\
&\leq& \|\varphi\|_{\infty}^d\sum_{k_1\in \mathcal{Z}_{j',1}(x)}\cdots\sum_{k_d\in \mathcal{Z}_{j',d}(x)}\left(\int |\varphi(v)|dv\right)^d\\
&\leq&\left( \|\varphi\|_{\infty} \|\varphi\|_1(2A+1)\right)^d.
\end{eqnarray*}
We thus obtain the claimed result of Proposition \ref{PropBiais}.
\end{proof}

\subsubsection{Appendix}\label{appendix}

Technical lemmas are stated and proved below.

\begin{lemma}\label{Rosenthal} 
We have
$$
\E [(\tilde{\sigma}_{j,\tilde \gamma})^{ q  }] \leq R_5 2^{\Sj (2\nu+1)\frac{q}{2}},
$$
with $R_5$ a constant depending on $q,\tilde \gamma, \| m\|_{\infty}, s,  \| f_X\|_{\infty},\varphi, c_g, \mathcal{C}_g$.
\end{lemma}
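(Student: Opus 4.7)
The plan is to reduce the bound on $\tilde\sigma_{j,\tilde\gamma}^q$ to a bound on $\E\hat\sigma_j^q$ plus a deterministic term, and then control $\hat\sigma_j^q$ by isolating $\sigma_j^q$ (the target order of magnitude) and handling the fluctuation $|\hat\sigma_j^2-\sigma_j^2|^{q/2}$ via Rosenthal's inequality. Throughout I implicitly assume $j\in J$, i.e.\ $2^{\Sj}\leq n/\log^2 n$, which is how the lemma is actually invoked in the proof of Theorem \ref{th adaptive minimax p}. By Jensen, $\E\tilde\sigma_{j,\tilde\gamma}^q\leq(\E\tilde\sigma_{j,\tilde\gamma}^{2})^{q/2}$, so it suffices to treat $q\geq 2$.

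First, $2ab\leq a^2+b^2$ applied with $a=\hat\sigma_j$ and $b=C_j\sqrt{2\tilde\gamma\log n/n}$ yields $\tilde\sigma_{j,\tilde\gamma}^2\leq 2\hat\sigma_j^2+10\tilde\gamma C_j^2\log n/n$, so $\tilde\sigma_{j,\tilde\gamma}^q\leq C_q(\hat\sigma_j^q+(C_j^2\log n/n)^{q/2})$. By Lemma \ref{ordredegrandeursigmajTj}, $\|T_j\|_\infty\leq C\,2^{\Sj(\nu+1)}$, hence $C_j\leq C\sqrt{\log n}\,2^{\Sj(\nu+1)}$, and a short computation using $2^\Sj\leq n/\log^2 n$ shows $(C_j^2\log n/n)^{q/2}\leq C\,2^{\Sj(2\nu+1)q/2}$, which is the target order for the deterministic piece.

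To bound $\E\hat\sigma_j^q$, I would write $\hat\sigma_j^q\leq C_q(\sigma_j^q+|\hat\sigma_j^2-\sigma_j^2|^{q/2})$. The first summand is $\leq C\,2^{\Sj(2\nu+1)q/2}$ again by the lemma. For the deviation I would use the standard sample-variance identity $\hat\sigma_j^2=(n-1)^{-1}\sum_\ell Z_\ell^2-[n(n-1)]^{-1}(\sum_\ell Z_\ell)^2$ with $Z_\ell:=U_j(Y_\ell,W_\ell)-\E[U_j(Y_1,W_1)]$, decomposing $\hat\sigma_j^2-\sigma_j^2$ as the difference of two centered i.i.d.\ sums. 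Rosenthal's inequality applied to each gives bounds in terms of $\var(Z_1^2)\leq\E V_1^4\leq C\|T_j\|_\infty^2\E T_j^2(W_1)\leq C\,2^{\Sj(4\nu+3)}$ and $\E|Z_1|^q\leq C\|T_j\|_\infty^{q-2}\E T_j^2(W_1)\leq C\,2^{\Sj((\nu+1)q-1)}$, where $V_\ell:=U_j(Y_\ell,W_\ell)$ and I use $\E T_j^2(W_1)\leq\sigma_j^2/s^2\leq C\,2^{\Sj(2\nu+1)}$ from \eqref{eqsigma_j}.

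The main obstacle is the bookkeeping that follows: each of the four contributions produced by Rosenthal must be checked against the target $C\,2^{\Sj(2\nu+1)q/2}$. The variance-type terms scale like $\sigma_j^q$ and match the target exactly; the higher-moment-type terms exceed it by a factor essentially of the form $(2^\Sj\log^2 n/n)^{(q-2)/2}$ (up to constants), which is at most $1$ on $J$. Verifying that the cut-off $n/\log^2 n$ in the definition of $J$---rather than the naive $n$---is precisely what is needed to absorb every logarithmic factor coming from $C_j$ and from the Gaussian tails of $\varepsilon_\ell$ is the technical heart of the argument.
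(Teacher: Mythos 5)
Your proposal is correct and follows essentially the same route as the paper: reduce to $q\geq 2$ via Jensen, absorb the $C_j$-terms of $\tilde\sigma^2_{j,\tilde\gamma}$ using $C_j\leq C\sqrt{\log n}\,2^{\Sj(\nu+1)}$ and the cutoff $2^{\Sj}\leq n/\log^2 n$, and control $\E[\hat\sigma_j^q]$ by centering the squares $Z_\el^2-\sigma_j^2$, applying Rosenthal with the moment bounds $\E|Z_1|^q\leq C\|T_j\|_\infty^{q-2}\sigma_j^2$ and $\sigma_j^2\leq C2^{\Sj(2\nu+1)}$, $\|T_j\|_\infty\leq C2^{\Sj(\nu+1)}$, and checking the exponents against the definition of $J$. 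The only (harmless) deviations are that you handle the cross term by $2ab\leq a^2+b^2$ instead of bounding $\E[\hat\sigma_j^{q/2}]$ separately, and you use the exact sample-variance identity where the paper uses the cruder bound $\hat\sigma_j^2\leq \frac{C}{n}\sum_\el Z_\el^2$.
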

\begin{proof}
First, let us focus on the case $q  \geq 2$. We recall the expression of  $\tilde{\sigma}_{j,\tilde \gamma}^2$
$$
\tilde\sigma^2_{j,\tilde\gamma} =\hat\sigma^2_j+2C_j\sqrt{2 \tilde\gamma \hat\sigma_j^2\frac{\log n}{n}}+8 \tilde\gamma  C_j^2 \frac{\log n}{n}.
 $$
We shall first prove that $$ \E[(\hat{\sigma}_j)^{q } ]\leq C2^{\Sj (2\nu+1)\frac{q}{2}}  . $$  Let us remind that
$$ \hat{\sigma}_j^2 =\frac{1}{2n(n-1)}\sum_{\el\neq v} (U_j(Y_\el,W_\el)-U_j(Y_\vv,W_\vv))^2.$$
We easily get
$$
 \hat{\sigma}_j^2 \leq \frac{C}{n}\sum_{\el} (U_j(Y_\el,W_\el)-\E [U_j(Y_1,W_1)])^2.
$$
 First let us remark that
\begin{equation*}
 \left  (\sum_{\el} (U_j(Y_\el,W_\el)-\E [U_j(Y_1,W_1)])^2 \right )^{\frac q 2 }  \leq C \left ( \left (\sum_\el  (  (U_j(Y_\el,W_\el)-\E [U_j(Y_1,W_1)])^2 -\sigma_j^2) \right )^{\frac q 2} + n^{\frac q 2}\sigma_j^q \right )
\end{equation*}
We will use Rosenthal inequality (see \cite{Hardle}) to find an upper bound for
$$ \E \left [   \left (\sum_\el  (  (U_j(Y_\el,W_\el)-\E [U_j(Y_1,W_1)])^2 -\sigma_j^2 ) \right )^{\frac q 2 }  \right ].$$
We set 
$$ B_\el:=   (U_j(Y_\el,W_\el)-\E [U_j(Y_1,W_1)])^2 -\sigma_j^2.$$
The variables $B_\el$ are i.i.d and centered. We have to check that $ \E[|B_\el |^{\frac q 2}] < \infty $. We have
$$  \E[|B_\el |^{\frac q 2}]  \leq C(  \E [  |  (U_j(Y_\el,W_\el)-\E [U_j(Y_1,W_1)] |^q ] + \sigma_j^q ),$$
but 
$$   \E [  |  (U_j(Y_\el,W_\el)-\E [U_j(Y_1,W_1)] |^q ] =\frac{A_q}{n},$$
with $A_q$ defined in (\ref{defA_q}).  Hence
\begin{equation}\label{Bq}
  \E[|B_\el |^{\frac q 2}]  \leq C\left (\frac{A_q}{n}+ \sigma_j^q  \right ).
  \end{equation}
 Using the control of $A_q$ in (\ref{controlA_q}), equation (\ref{sigma}) and Lemma \ref{ordredegrandeursigmajTj} we have
\begin{eqnarray}
A_q &\leq& Cn\sigma_j^2 \| T_j\|_{\infty}^{q-2} \nonumber \\
&\leq & C n2^{\Sj(q\nu +q-1)} \label{control2A_q}.
\end{eqnarray}
Now, we are able to apply the Rosenthal inequality to the variables $B_\el$ which yields
\begin{eqnarray*}
\E \left [ \left  (\sum_\el B_\el  \right )^{\frac q 2 }\right ] &\leq &C \left ( \sum_\el \E[|B_\el|^{\frac q 2 }] + \left (\sum_\el \E [ B_\el^2] \right )^{\frac q 4 }\right),  \\
  \end{eqnarray*}
   and using (\ref{Bq}) and (\ref{control2A_q}) we get
  \begin{eqnarray*}
 \E \left [ \left  (\sum_\el B_\el  \right )^{\frac q 2 }\right ] &\leq & C \left(  \sum_\el \left (\frac{A_q}{n}+\sigma_j^q  \right) + \left (\sum_\el \left (\frac{A_4}{n} + \sigma_j^4 \right  ) \right )^{\frac q 4 } \right)  \\
&\leq& C \left ( A_q +n\sigma_j^q +(A_4)^{\frac q 4 } +n^{\frac q 4} \sigma_j^q \right) \\
 &\leq & C\left(   n2^{\Sj(q\nu+q-1)}  +n 2^{\Sj(2\nu+1)\frac{q}{2}} + ( n2^{\Sj(4\nu+3} )^{\frac q 4 }  \right) .
 \end{eqnarray*}
Consequently
\begin{eqnarray*}
\E[\hat{\sigma}_j^{q }] &\leq&  C{n^{-\frac q 2 }} \left( n2^{\Sj(q\nu+q-1)} +n 2^{\Sj(2\nu +1)\frac{q}{2} } + ( n2^{\Sj(4\nu+3} )^{\frac q 4 }  + n^{\frac q 2 } 2^{\Sj(2\nu +1)\frac{q}{2} } \right) \\
&\leq & C (n^{1-\frac q 2}2^{\Sj(q\nu+q-1)} +  n^{1-\frac q 2} 2^{\Sj(2\nu+1)\frac{q}{2}}  +n^{-\frac q 4} 2^{\Sj(4\nu+3)\frac q 4} + 2^{\Sj(2\nu+1)\frac{q}{2}}).
\end{eqnarray*}
Let us compare each term of the r.h.s of the last inequality. We have 
$$ n^{1-\frac q 2}2^{\Sj(q\nu+q-1)} \leq 2^{\Sj(2\nu+1)\frac{q}{2}}  \Longleftrightarrow 2^{\Sj} \leq n,$$
which is true by (\ref{setJ}). Similarly we have 
$$ n^{-\frac q 4} 2^{\Sj(4\nu+3)\frac q 4} \leq  2^{\Sj(2\nu+1)\frac{q}{2}}  \Longleftrightarrow 2^{\Sj} \leq n ,  $$
and obviously
$$  n^{1-\frac q 2} 2^{\Sj(2\nu+1)\frac{q}{2}}  \leq 2^{\Sj(2\nu+1)\frac{q}{2}}. $$

Thus we get that the dominant term in r.h.s  is $2^{\Sj(2\nu+1)\frac{q}{2}}$. Hence
$$ \E[\hat{\sigma}_j^q] \leq C2^{\Sj (2\nu+1)\frac{q}{2}}  . $$
Now using that
\begin{eqnarray*}
 \E [ \tilde\sigma_{j,\tilde\gamma}^{ q } ] &\leq & C \left(  \E[ \hat{\sigma}_j^{ q }]  +\left (2C_j\sqrt{2\tilde \gamma \frac{\log n}{n}}\right) ^{\frac q 2}\E[ \hat{\sigma}_j^{\frac q 2}] +   \left (8 \tilde\gamma  C_j^2 \frac{\log n}{n} \right)^{\frac q 2 } \right),
 \end{eqnarray*}
  and since $C_j \leq C \sqrt{\log n} 2^{\Sj(\nu+1)}$, we have
 \begin{eqnarray*}
  \E [ \tilde\sigma_{j,\tilde\gamma}^{ q } ] &\leq &C  \left (  2^{\Sj(2\nu+1)\frac{q}{2}} +((\log n)  n^{-\frac 1 2 } 2^{\Sj(\nu+1)})^{\frac q 2}  2^{\Sj (2\nu+1)\frac{q}{4}} +
 \left (\frac{\log^2 n }{n} 2^{2\Sj(\nu+1)}\right)^{\frac q 2}
    \right).\\
       \end{eqnarray*}
   Let us compare the three terms of the right hand side. We have 
   
   \begin{eqnarray*}
    2^{\Sj\frac{q(2\nu+1)}{2}} &\geq&  ((\log n)  n^{-\frac 1 2 } 2^{\Sj(\nu+1)})^{\frac q 2}  2^{\Sj(2\nu+1)\frac{q}{4}} \Longleftrightarrow 2^{\Sj(q\nu + \frac q 2)} \geq (\log n)^{\frac q 2} n^{-\frac q 4} 2^{\Sj(q\nu +\frac{3q}{4})}   \Longleftrightarrow  2^{\Sj} \leq  \frac{n}{\log^2 n},
   \end{eqnarray*}
   which is true by (\ref{setJ}).
   Furthermore we have
   \begin{equation}
   2^{\Sj\frac{q(2\nu+1)}{2}} \geq   \left (\frac{\log^2 n }{n} 2^{2\Sj(\nu+1)}\right)^{\frac q 2}  \Longleftrightarrow 2^{\Sj(q\nu +\frac q 2)} \geq \left (\frac{\log^2 n}{n}\right)^{\frac q2} 2^{\Sj(q\nu +q)}  \Longleftrightarrow 2^{\Sj} \leq  \frac{n}{\log^2 n},
 \end{equation}
    which is true again by (\ref{setJ}).
Consequently
$$
 \E [ \tilde\sigma_{j,\tilde\gamma}^{q} ] \leq R_5 2^{\Sj(2\nu+1)\frac{q}{2}},
 $$
 with $R_5$ a constant depending on $q,\tilde \gamma, \| m\|_{\infty}, s,  \| f_X\|_{\infty},\varphi, c_g, \mathcal{C}_g$
 and the lemma is proved for $q\geq 2$. \\For the case $ q \leq  2$ the result follows from Jensen inequality.
\end{proof}

\begin{lemma} \label{Wavelets}
Under assumption (A1) on the father wavelet $\varphi$, we have for any $j=(j_1,\ldots,j_d)$ and any $x \in \R^d$,
$$\sum_{k}|\varphi_{jk}(x)|\leq (2A+1)^d\|\varphi\|_{\infty}^d2^{\frac{\Sj}{2}}.$$
\end{lemma}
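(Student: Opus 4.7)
The plan is a direct computation exploiting the tensor-product structure of $\varphi_{jk}$ together with the compact support condition (A1).

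First I would factor the sum: by the definition $\varphi_{jk}(x)=\prod_{\el=1}^d 2^{j_\el/2}\varphi(2^{j_\el}x_\el-k_\el)$, we may write
$$\sum_{k\in\Z^d}|\varphi_{jk}(x)| \;=\; \prod_{\el=1}^d 2^{j_\el/2}\sum_{k_\el\in\Z}\bigl|\varphi(2^{j_\el}x_\el-k_\el)\bigr|,$$
reducing the problem to the one-dimensional sums.

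Next I would handle each one-dimensional sum using (A1). Since $\varphi$ is supported in $[-A,A]$, the term $\varphi(2^{j_\el}x_\el-k_\el)$ vanishes unless $k_\el$ lies in the interval $[2^{j_\el}x_\el-A,\,2^{j_\el}x_\el+A]$, which contains at most $2A+1$ integers. Each non-zero contribution is bounded by $\|\varphi\|_\infty$, so
$$\sum_{k_\el\in\Z}\bigl|\varphi(2^{j_\el}x_\el-k_\el)\bigr|\;\leq\;(2A+1)\|\varphi\|_\infty.$$

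Taking the product over $\el=1,\dots,d$ and using $\prod_\el 2^{j_\el/2}=2^{\Sj/2}$ yields the claimed bound. There is no real obstacle here; it is purely a counting argument based on compact support, and the only thing to be careful about is to keep the tensor factorization explicit before invoking the support condition coordinatewise.
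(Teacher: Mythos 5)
Your argument is correct and is essentially identical to the paper's proof: both exploit the tensor-product form of $\varphi_{jk}$, the fact that at most $2A+1$ integers $k_\el$ satisfy $|2^{j_\el}x_\el-k_\el|\leq A$ by the compact support assumption (A1), and the sup-norm bound $\|\varphi\|_\infty$ on each factor. Factoring into one-dimensional sums first versus bounding the multi-indexed sum directly is only a cosmetic difference.
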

\begin{proof}
Let $x \in \R^d$ be fixed. We set for any $j$ and any $\el\in\{1,\ldots,d\}$,
$$\mathcal{Z}_{j,\el}(x)=\left\{k_\el :\quad |2^{j_\el}x_\el-k_\el|\leq A\right\},$$
whose cardinal is smaller or equal to $(2A+1)$.
Since 
$$\varphi_{jk}(x)=\prod_{\el=1}^d2^{\frac{j_\el}{2}}\varphi(2^{j_\el}x_\el-k_\el),$$ then
$$\varphi_{jk}(x)\not=0\Rightarrow \forall\,\el\in\{1,\ldots,d\}, \ k_\el\in \mathcal{Z}_{j,\el}(x).$$
Now,
\begin{eqnarray*}
\sum_{k}|\varphi_{jk}(x)|&=&\sum_{k_1\in \mathcal{Z}_{j,1}(x)}\cdots\sum_{k_d\in
\mathcal{Z}_{j,d}(x)}\prod_{\el=1}^d2^{\frac{j_\el}{2}}|\varphi(2^{j_\el}x_\el-k_\el)|\\
&\leq&\sum_{k_1\in \mathcal{Z}_{j,1}(x)}\cdots\sum_{k_d\in \mathcal{Z}_{j,d}(x)}\|\varphi\|_{\infty}^d2^{\frac{\Sj}{2}}\\
&\leq&(2A+1)^d\|\varphi\|_{\infty}^d2^{\frac{\Sj}{2}}.
\end{eqnarray*}
\end{proof}

\begin{lemma}\label{deriveephi}
Under condition (A1) and $\varphi$ is $\mathcal{C}^r$, there exist constants $R_6$  and $R_7$ depending on $\varphi$ such that 
 \begin{equation}\label{deriveeTFwavelet1}
\left | \mathcal{F}^{}(\varphi)(t)  \right | \leq  R_6 (1+|t|)^{-r}, \quad \textrm{for} \; any\;  t.
 \end{equation}
 and
  \begin{equation}\label{deriveeTFwavelet2}
  \left | \overline{\mathcal{F}^{}(\varphi)(t)}'  \right | \leq  R_7 (1+|t|)^{-r}, \quad \textrm{for} \; any\;  t.
 \end{equation}
 \end{lemma}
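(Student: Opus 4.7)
The plan is to obtain both bounds by the standard integration-by-parts argument that exploits simultaneously the compact support (A1) and the $\mathcal{C}^r$-smoothness (A3) of $\varphi$.

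For \eqref{deriveeTFwavelet1}, I would first note the trivial $L^\infty$-bound
\[
|\mathcal{F}(\varphi)(t)| \;\leq\; \|\varphi\|_1 \;\leq\; 2A\,\|\varphi\|_\infty,
\]
which handles small $|t|$. For large $|t|$, since $\varphi\in\mathcal{C}^r$ is supported in $[-A,A]$, all derivatives $\varphi^{(k)}$ for $k\leq r$ are also compactly supported in $[-A,A]$ and vanish at the endpoints, so integrating by parts $r$ times in the definition of $\mathcal{F}(\varphi)(t)$ kills the boundary terms and gives
\[
\mathcal{F}(\varphi)(t) \;=\; \frac{1}{(it)^r}\int_{-A}^{A} \varphi^{(r)}(y)\,e^{-ity}\,dy,
\]
from which $|\mathcal{F}(\varphi)(t)|\leq |t|^{-r}\,\|\varphi^{(r)}\|_1 \leq 2A\,\|\varphi^{(r)}\|_\infty\,|t|^{-r}$. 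Combining the two bounds yields a constant $R_6$ depending only on $A$, $\|\varphi\|_\infty$ and $\|\varphi^{(r)}\|_\infty$ such that $|\mathcal{F}(\varphi)(t)|\leq R_6(1+|t|)^{-r}$.

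For \eqref{deriveeTFwavelet2}, I would observe that differentiating under the integral (legal because $\varphi$ is compactly supported) gives
\[
\mathcal{F}(\varphi)'(t) \;=\; -i\int_{-A}^{A} y\,\varphi(y)\,e^{-ity}\,dy \;=\; \mathcal{F}(\psi)(t),
\]
where $\psi(y):=-iy\,\varphi(y)$. Since $\psi$ is again compactly supported in $[-A,A]$ and of class $\mathcal{C}^r$ (product of a polynomial with a $\mathcal{C}^r$ compactly supported function), the very same integration-by-parts argument applied to $\psi$ yields $|\mathcal{F}(\psi)(t)|\leq R_7(1+|t|)^{-r}$ with a constant depending on $A$, $\|\psi\|_\infty$ and $\|\psi^{(r)}\|_\infty$; these can be bounded by products of $A$ with $\|\varphi\|_\infty,\dots,\|\varphi^{(r)}\|_\infty$ via the Leibniz formula. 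Since $|\overline{z}'|=|z'|$, complex conjugation does not affect the modulus, so the same bound holds for $|\overline{\mathcal{F}(\varphi)(t)}'|$.

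There is no real obstacle here: both estimates are textbook consequences of the support and smoothness assumptions on $\varphi$. The only mildly delicate point is tracking that in the second inequality the weight $y$ preserves both compact support and $\mathcal{C}^r$-regularity so that the same integration-by-parts machinery applies without modification.
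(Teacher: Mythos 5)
Your proposal is correct and follows essentially the same route as the paper: $r$-fold integration by parts using the compact support (which kills boundary terms) for $|t|\geq 1$, a trivial $L^1$ bound for small $|t|$, and for the derivative the identification with the Fourier transform of $y\mapsto y\varphi(y)$ handled by the same argument plus the Leibniz formula. No gaps.
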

\begin{proof}
First, let us focus on the case $|t| \geq 1$. \\
 
We have by integration by parts that
\begin{eqnarray*}
 \mathcal{F}(\varphi)(t)  = \int e^{-itx} \varphi(x) dx = \left [  -\frac{1}{it}e^{-itx}\varphi(x) \right ]^{\infty}_{-\infty}+\frac{1}{it}\int e^{-itx}\varphi'(x) dx.
\end{eqnarray*}
Using that the father wavelet $\varphi$ is compactly supported on $[-A,A]$, we get 
\begin{eqnarray*}
 \mathcal{F}^{}(\varphi)(t)  = \frac{1}{it}\int e^{-itx}\varphi'(x) dx.
\end{eqnarray*}
By successive integration by parts and using that $|t|\geq 1$ one gets 
\begin{eqnarray*}
\left | \mathcal{F}(\varphi)(t)  \right |= \left |\frac{1}{(it)^r}\int e^{-itx}\varphi^{(r)}(x) dx \right | \leq \frac{2^r}{(1+|t|)^r} \int|\varphi^{(r)}(x)| dx,
\end{eqnarray*}
the integral $ \int_{-A}^{A}|\varphi^{(r)}(x)| dx$ being finite. \\
For the derivative we have
\begin{eqnarray*}
\overline{ \mathcal{F}(\varphi)(t) }' = i\int e^{itx}  x \varphi(x) dx.
\end{eqnarray*}
Following the same scheme as for $\mathcal{F}(\varphi)(t)$, one gets by integration by parts and using the Leibniz formula that
\begin{eqnarray*}
\left | \overline{ \mathcal{F}(\varphi)(t) }'  \right | &=& \left | \frac{1}{(it)^r}\int e^{itx}\frac{d^r}{dx^r}(x\varphi(x)) dx \right |  = \left | \frac{1}{(it)^r}\int e^{itx}\sum_{k=0}^{r}\binom{r}{k} x^{(k)}\varphi(x)^{(r-k)} dx \right | \\
 &\leq&  \frac{2^r}{(1+|t|)^r}   \sum_{k=0}^{r}\binom{r}{k} \int | x^{(k)}\varphi(x)^{(r-k)}| dx,
\end{eqnarray*}
the quantity  $\sum_{k=0}^{r}\binom{r}{k} \int_{-A}^{A} | x^{(k)}\varphi(x)^{(r-k)}| dx$ being finite.

Hence the lemma is proved for $|t| \geq 1$.\\
The result for $|t|\leq 1$ is obvious since
 $$\left | \mathcal{F}^{}(\varphi)(t) \right | =\left | \int e^{-itx} \varphi(x)dx \right | \leq \int | \varphi(x)| dx < \infty ,$$
 and
  $$\left | \overline{ \mathcal{F}^{}(\varphi)(t) }'\right | =\left | i\int e^{itx}x \varphi(x)dx \right | \leq \int |x \varphi(x)| dx < \infty .$$
 \\ 
Then the lemma is proved for any $t$. 
\end{proof}

\begin{lemma}\label{operateurK_j} Under conditions (A1) and (A3), for $\nu \geq 0$, 
we have
$$\left |  (\mathcal{D}_j\varphi )(w) \right | \leq R_8 2^{\Sj \nu}  \prod_{\el=1}^d (1+|w_\el|)^{-1}, \; w \in \R^d$$

where $R_8$ is a  constant depending on $\varphi$, $ {\mathcal C_g}$ and $c_g$.
\end{lemma}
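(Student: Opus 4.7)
The plan is as follows. The deconvolution operator $\mathcal{D}_j \varphi$ factorises as a tensor product of one-dimensional integrals thanks to the product structure $\mathcal{F}(g)(t)=\prod_\el \mathcal{F}(g_\el)(t_\el)$ and $e^{-i\langle t,w\rangle}=\prod_\el e^{-it_\el w_\el}$. Writing
$$I_\el(w_\el):=\int_\R e^{-it_\el w_\el}\,\frac{\overline{\mathcal{F}(\varphi)(t_\el)}}{\mathcal{F}(g_\el)(2^{j_\el}t_\el)}\,dt_\el,$$
we have $(\mathcal{D}_j\varphi)(w)=\prod_{\el=1}^{d} I_\el(w_\el)$, so the claim reduces to proving a one-dimensional estimate $(1+|w_\el|)\,|I_\el(w_\el)|\leq C\,2^{j_\el\nu}$ with $C$ depending only on $\varphi, c_g, \mathcal{C}_g$, after which the lemma follows by taking the product over $\el$ with $R_8=C^{d}$.

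First I would establish the size bound $|I_\el(w_\el)|\leq C 2^{j_\el\nu}$. Combining the decay of $\mathcal{F}(\varphi)$ from Lemma~\ref{deriveephi} with the lower bound in (\ref{hypobruit1}), which yields $|\mathcal{F}(g_\el)(2^{j_\el}t_\el)|^{-1}\leq c_g^{-1}(1+|2^{j_\el}t_\el|)^{\nu}$, and using the elementary inequality $1+|2^{j_\el}t_\el|\leq 2^{j_\el}(1+|t_\el|)$ valid for $j_\el\geq 0$, the integrand is bounded by $C\,2^{j_\el\nu}(1+|t_\el|)^{-r+\nu}$, which is integrable because the assumption $\nu\leq r-2$ gives $r-\nu\geq 2>1$.

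The delicate bound is on $|w_\el I_\el(w_\el)|$, obtained through integration by parts: writing $w_\el e^{-it_\el w_\el}=i\,\partial_{t_\el}e^{-it_\el w_\el}$ and transferring the derivative onto the remaining factor (boundary terms vanishing by the $(1+|t|)^{-r}$ decay of $\mathcal{F}(\varphi)$) produces
$$w_\el I_\el(w_\el)=-i\int e^{-it_\el w_\el}\left[\frac{\overline{\mathcal{F}(\varphi)(t_\el)}'}{\mathcal{F}(g_\el)(2^{j_\el}t_\el)}-\frac{2^{j_\el}\,\overline{\mathcal{F}(\varphi)(t_\el)}\,\mathcal{F}'(g_\el)(2^{j_\el}t_\el)}{\mathcal{F}(g_\el)(2^{j_\el}t_\el)^{2}}\right]dt_\el.$$
The first summand is treated exactly as the size bound using (\ref{deriveeTFwavelet2}). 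For the second, assumption (\ref{hypobruit2}) on $\mathcal{F}'(g_\el)$ contributes the crucial extra $(1+|2^{j_\el}t_\el|)^{-1}$, and after the substitution $u=2^{j_\el}t_\el$ and a split of the integration range between $|u|\leq 1$ and $|u|>1$, one again obtains a bound of order $2^{j_\el\nu}$. Adding this to the direct bound yields $(1+|w_\el|)|I_\el(w_\el)|\leq C 2^{j_\el\nu}$.

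The main obstacle is the second IBP term: without the sharper decay $(1+|s|)^{-\nu-1}$ for $\mathcal{F}'(g_\el)$ supplied by (\ref{hypobruit2}), the factor $2^{j_\el}$ produced by the chain rule would combine with the $(1+|2^{j_\el}t_\el|)^{2\nu}$ coming from $\mathcal{F}(g_\el)^{-2}$ and deliver the wrong rate $2^{j_\el(\nu+1)}$ in place of $2^{j_\el\nu}$. The whole point of the technical decay assumption (\ref{hypobruit2}) is to absorb exactly this $2^{j_\el}$ and keep the ill-posedness at the announced polynomial order $\nu$.
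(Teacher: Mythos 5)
Your argument is essentially the paper's: the same two ingredients (the direct bound on the integrand via Lemma \ref{deriveephi} and the lower bound in (\ref{hypobruit1}), giving the order $2^{j_\el\nu}$, and an integration by parts whose derivative term is controlled through (\ref{hypobruit2})) appear in the same roles, and your choice to prove $(1+|w_\el|)|I_\el(w_\el)|\leq C\,2^{j_\el\nu}$ coordinatewise and multiply is only cosmetically different from the paper's split of the coordinates according to $|w_\el|<1$ or $|w_\el|\geq 1$. The one point you gloss over is the endpoint $\nu=0$, which the lemma explicitly includes: for the second integration-by-parts term the bound $C\,2^{j_\el}(1+|t_\el|)^{-r}(1+2^{j_\el}|t_\el|)^{\nu-1}$ integrated over $2^{-j_\el}\leq|t_\el|\leq 1$ behaves like $\int_1^{2^{j_\el}}s^{\nu-1}ds$, which is of order $2^{j_\el\nu}$ only when $\nu>0$ and grows like $j_\el$ when $\nu=0$, so your claim that the split $|u|\leq1$, $|u|>1$ ``again'' yields order $2^{j_\el\nu}$ is not uniform down to $\nu=0$. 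The paper is aware of this (its bound on the corresponding term $D_2$ is stated ``as soon as $\nu>0$'') and appends a separate argument for $\nu=0$; you should either add such a case distinction or restrict the integration-by-parts estimate to $\nu>0$ and treat $\nu=0$ directly.
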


\begin{proof} If all the $  |w_\el| <1$ then using (\ref{hypobruit1}), Lemma \ref{deriveephi}  and $r\geq \nu+2$ with $\nu \geq 0 $ we have
\begin{eqnarray}   
\left | ( \mathcal D_j \varphi ) (w)   \right |  &\leq&   \prod_{\el=1}^d \int \frac{\left | 
\mathcal{F}(\varphi)(t_\el) \right | }{\left |  \mathcal{F}(g_\el)(2^{j_\el} t_\el) \right |} dt_\el \\
&\leq &  C  \prod_{\el=1}^d  \int    \left |\mathcal{F}(\varphi)(t_\el)(1+2^{j_\el} |t_\el|)^{\nu}
\right | dt_\el \\
&\leq & C 2^{\Sj \nu}     \prod_{\el=1}^d \int      (1+| t_\el |)^{\nu-r} dt_\el \\
&\leq &  C 2^{\Sj \nu}  \leq  C 2^{\Sj \nu} 
\prod_{\el=1}^d (1+|w_\el|)^{-1}. \label{wi}
\end{eqnarray}

Now we consider the case where there exists at least one $w_\el$ such that $|w_\el| \geq 1$. We have 
$$
( \mathcal D_j \varphi ) (w) =   \prod_{\el=1,|w_\el |\leq 1}^d \int e^{-it_\el w_\el} \frac{
\overline{\mathcal{F}(\varphi)(t_\el)}  }{  \mathcal{F}(g_\el)(2^{j_\el} t_\el) } dt_\el\times\prod_{\el=1,|w_\el |\geq  1}^d \int e^{-it_\el w_\el} \frac{\
\overline{\mathcal{F}(\varphi)(t_\el) } }{  \mathcal{F}(g_\el)(2^{j_\el} t_\el) } dt_\el.
$$
For the left-hand product on $|w_\el | \leq 1$ we use the result (\ref{wi}). Now let us consider the right-hand product with $|w_\el | \geq 1$. We set in the sequel
$$\eta_\el (t_\el) :=\frac{ \overline{ \mathcal{F}(\varphi)(t_\el)}}{ \mathcal{F}(g_\el)(2^{j_\el} t_\el )  }. $$ We have

$$
\prod_{\el=1, |w_\el |\geq  1}^d \int e^{-it_\el w_\el} \frac{\
\overline{\mathcal{F}(\varphi)(t_\el) } }{  \mathcal{F}(g_\el)(2^{j_\el} t_\el) } dt_\el =  \prod_{\el=1,|w_\el|\geq  1}^d \int e^{-it_\el w_\el} \eta_\el(t_\el)  dt_\el.
$$
Since $|\eta_\el(t_\el) | \rightarrow 0$ when $t_\el\rightarrow \pm  \infty$, an integration by part  yields
$$
\int e^{-it_\el w_\el} \eta_\el(t_\el)  dt_\el= iw_\el^{-1}  \int_{} e^{-it_\el w_\el }\eta_\el '(t_\el) dt_\el.
$$
Let us compute the derivative of $\eta_\el(t_\el)$
 \begin{eqnarray*}
 \eta_\el '(t_\el)=\frac{\overline{\mathcal{F}(\varphi)(t_\el)}'\mathcal{F}(g)(2^{j_\el}t_\el )-2^{j_\el} \mathcal{F}'(g)(2^{j_\el }t_\el ) \overline{\mathcal{F}(\varphi)(t_\el )}}{ (\mathcal{F}(g)(2^{j_\el }t_\el ))^2 }.
 \end{eqnarray*}
 Using Lemma \ref{deriveephi}, (\ref{hypobruit1}) and  (\ref{hypobruit2}) 
 \begin{eqnarray*}
 | \eta_\el '(t_\el ) | &\leq& \left |   \frac{\overline{\mathcal{F}(\varphi)(t_\el)}'}{\mathcal{F}(g)(2^{j_\el}t_\el) } \right | +  2^{j_\el} \left |  \frac{ \mathcal{F}'(g)(2^{j_\el}t_\el) \mathcal{F}(\varphi)(t_\el) } {    (\mathcal{F}(g)(2^{j_\el}t_\el))^2 } \right | \\
 &\leq & C \left((1+|t_\el |)^{-r}(1+2^{j_\el}|t_\el |)^\nu  +2^{j_\el }(1+2^{j_\el }|t_\el |)^{-\nu-1}(1+|t_\el |)^{-r}(1+2^{j_\el }|t_\el |)^{2\nu} \right) \\
 &\leq & C\left (2^{j_\el \nu}  (1+|t_\el |)^{-r}(2^{-j_\el }+|t_\el |)^\nu + 2^{j_\el }(1+2^{j_\el }|t_\el |)^{\nu-1}(1+|t_\el |)^{-r} \right) \\
  &\leq & C \left(2^{j_\el \nu}  (1+|t_\el |)^{-r}(2^{-j_\el }+|t_\el |)^\nu + 2^{j_\el \nu}(2^{-j_\el }+|t_\el |)^{\nu-1}(1+|t_\el |)^{-r} \right)\\
  &\leq & C 2^{j_\el \nu} \left(  (1+|t_\el |)^{-r}(2^{-j_\el}+|t_\el|)^\nu + (2^{-j_\el }+|t_\el |)^{\nu-1}(1+|t_\el |)^{-r}   \right). 
 \end{eqnarray*}
 Therefore,
\begin{eqnarray*}
\left |  \int e^{-it_\el w_\el } \eta_\el (t_\el)  dt_\el    \right | &\leq& |w_\el |^{-1}\int |\eta_\el  '(t_\el )| dt_\el\\
&\leq&C|w_\el |^{-1}2^{j_\el \nu}\int\left(  (1+|t_\el |)^{-r}(2^{-j_\el }+|t_\el |)^\nu + (2^{-j_\el }+|t_\el |)^{\nu-1}(1+|t_\el |)^{-r}   \right)dt_\el\\
&\leq&C|w_\el |^{-1}2^{j_\el \nu}(D_1+D_2+D_3),
\end{eqnarray*}
with $D_1$, $D_2$ and $D_3$ defined below.
\begin{eqnarray*}
D_1&:= &\int_{|t_\el | \leq 2^{-j_\el}} \left(  (1+|t_\el|)^{-r}(2^{-j_\el }+|t_\el |)^\nu + (2^{-j_\el }+|t_\el |)^{\nu-1}(1+|t_\el |)^{-r}   \right)dt_\el \\
&\leq&C\int_{|t_\el | \leq 2^{-j_\el }} \left( (2^{-j_\el }+|t_\el |)^\nu + (2^{-j_\el }+|t_\el |)^{\nu-1}  \right)dt_\el \\
&\leq&C2^{-j_\el }(2^{-j_\el \nu}+2^{-j_\el (\nu-1)})\\
&\leq&C.
\end{eqnarray*}
\begin{eqnarray*}
D_2&:= &\int_{2^{-j_\el }\leq |t_\el | \leq 1}\left(  (1+|t_\el |)^{-r}(2^{-j_\el }+|t_\el |)^\nu + (2^{-j_\el }+|t_\el |)^{\nu-1}(1+|t_\el |)^{-r}   \right)dt_\el \\
&\leq&C\int_{2^{-j_\el }\leq |t_\el | \leq 1} \left( (2^{-j_\el }+|t_\el|)^\nu + (2^{-j_\el}+|t_\el|)^{\nu-1}  \right)dt_\el \\
&\leq&C\int_1^{2^{j_\el }}((2^{-j_\el }+2^{-j_\el }s)^\nu+(2^{-j_\el }+2^{-j_\el }s)^{\nu-1})2^{-j_\el }ds\\
&\leq&C2^{-j_\el (\nu+1)}\int_1^{2^{j_\el }}s^\nu ds+C2^{-j_\el \nu}\int_1^{2^{j_\el }}s^{\nu-1}ds\\
&\leq&C,
\end{eqnarray*}
as soon as $\nu>0$.
\begin{eqnarray*}
D_3&:= &\int_{ |t_\el |\geq 1}\left(  (1+|t_\el |)^{-r}(2^{-j_\el }+|t_\el |)^\nu + (2^{-j_\el }+|t_\el |)^{\nu-1}(1+|t_\el |)^{-r}   \right)dt_i\\
&\leq&C\int_{ |t_\el |\geq 1}\left(|t_\el |^{\nu-r}+|t_\el |^{\nu-1-r}   \right)dt_\el \\
&\leq&C,
\end{eqnarray*}
since $\nu-r\leq -2$.

When $\nu=0$  we still have
$$ \left  | \int e^{-it_\el w_\el} \eta_\el (t_\el )  dt_\el  \right| \leq C|w_\el |^{-1}2^{j_\el \nu} =   C|w_\el |^{-1}.$$
Indeed when $\nu=0$ 
 $$\eta_\el (t_\el)= \overline{\mathcal{F}(\varphi)(t_\el )},$$
 and
 \begin{eqnarray*}
\left  | iw_\el^{-1}  \int_{} e^{-it_\el w_\el }\eta_\el '(t_\el) dt_\el \right|  &= &  \left   | iw_\el ^{-1}  \int  e^{-it_\el w_\el }  \overline{\mathcal{F}(\varphi)(t_\el )}'   dt_\el \right | \\
&\leq & |w_\el |^{-1}  \int_{} \left |  \overline{\mathcal{F}(\varphi)(t_\el )}' \right | dt_\el \\
&\leq & C |w_\el |^{-1 } \int (1+|t|)^{-r} dt < C |w_\el |^{-1},
 \end{eqnarray*}
 using Lemma \ref{deriveephi}  and $r\geq 2$.

%
\end{proof}

\begin{lemma}\label{ordredegrandeursigmajTj}
There exist constants $T_3$ depending on $\|m\|_{\infty}$, $\sigma_{\varepsilon}$, $\|f_X\|_{\infty}$, $\varphi$ ,$c_g$, $\mathcal{C}_g$ and $T_4$ depending on $\varphi$, $c_g$, $\mathcal{C}_g$ such that
$$\sigma_j^2\leq R_{10} 2^{{\Sj(2\nu+1)}},\quad \|T_j\|_{\infty}\leq R_{11} 2^{\Sj(\nu+1)}.$$
\end{lemma}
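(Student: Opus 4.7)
The plan is to bound $\|T_j\|_\infty$ and $\|T_j\|_2$ directly from the pointwise estimate for $\mathcal{D}_j\varphi$ obtained in Lemma~\ref{operateurK_j}, then deduce the bound on $\sigma_j^2$ via the decomposition~(\ref{eqsigma_j}) already established in the proof of Proposition~\ref{Th:conc}.

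First, I would rewrite
$$T_j(w) = \sum_k (\mathcal{D}_j\varphi)_{j,k}(w)\,\varphi_{jk}(x) = 2^{\Sj}\sum_k (\mathcal{D}_j\varphi)(2^j w - k)\,\varphi(2^j x - k),$$
and observe, exactly as in the proof of Lemma~\ref{Wavelets}, that the sum effectively ranges over $k\in\mathcal{Z}_{j,1}(x)\times\cdots\times\mathcal{Z}_{j,d}(x)$, a set of cardinality at most $(2A+1)^d$ since $\varphi$ is supported in $[-A,A]$. Using $|\varphi(2^j x-k)|\le\|\varphi\|_\infty^d$ on these $k$'s, together with Lemma~\ref{operateurK_j} which gives $|(\mathcal{D}_j\varphi)(2^jw-k)|\le R_8\, 2^{\Sj\nu}\prod_{\ell=1}^d(1+|2^{j_\ell}w_\ell - k_\ell|)^{-1}\le R_8\,2^{\Sj\nu}$, the uniform bound
$$\|T_j\|_\infty\le (2A+1)^d\|\varphi\|_\infty^d R_8\, 2^{\Sj(\nu+1)}$$
follows immediately, which yields the desired $R_{11}\,2^{\Sj(\nu+1)}$ with $R_{11}$ depending only on $\varphi$, $c_g$, $\mathcal{C}_g$.

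For the $L^2$ bound, I would keep the factors $(1+|2^{j_\ell}w_\ell-k_\ell|)^{-1}$ from Lemma~\ref{operateurK_j} \emph{before} taking the supremum in $w$. Applying Cauchy--Schwarz over the at most $(2A+1)^d$ surviving indices $k$,
$$T_j(w)^2 \le C\, 2^{2\Sj(\nu+1)}\sum_{k\in\prod_\ell \mathcal{Z}_{j,\ell}(x)}\prod_{\ell=1}^d(1+|2^{j_\ell}w_\ell-k_\ell|)^{-2},$$
and then integrating in $w$ with the change of variables $u_\ell=2^{j_\ell}w_\ell-k_\ell$ produces a Jacobian factor $2^{-\Sj}$ times a finite constant $\bigl(\int(1+|u|)^{-2}du\bigr)^d$. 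Summing the $(2A+1)^d$ contributions therefore yields $\|T_j\|_2^2\le C\, 2^{\Sj(2\nu+1)}$.

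Finally, inequality~(\ref{eqsigma_j}) from the proof of Proposition~\ref{Th:conc} already gives
$$\sigma_j^2\le \|f_X\|_\infty\|T_j\|_2^2\bigl(s^2+\|m\|_\infty^2\bigr),$$
so substituting the previous bound on $\|T_j\|_2^2$ delivers $\sigma_j^2\le R_{10}\,2^{\Sj(2\nu+1)}$, with $R_{10}$ depending on $\|m\|_\infty,s,\|f_X\|_\infty,\varphi,c_g,\mathcal{C}_g$ as claimed. No step is genuinely hard; the only mild subtlety is to resist the temptation to take the supremum too early in $w$, since doing so forfeits the $2^{-\Sj}$ Jacobian gain needed to get the sharp exponent $2\nu+1$ (rather than $2\nu+2$) for $\|T_j\|_2^2$.
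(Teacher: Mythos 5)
Your proposal is correct and follows essentially the same route as the paper: the $\|T_j\|_\infty$ bound combines Lemma~\ref{operateurK_j} with the $(2A+1)^d$-term localization of Lemma~\ref{Wavelets}, and the $\sigma_j^2$ bound reduces to $\sigma_j^2\le \|f_X\|_\infty\|T_j\|_2^2(s^2+\|m\|_\infty^2)$ together with the change of variables that turns the decay $\prod_\ell(1+|2^{j_\ell}w_\ell-k_\ell|)^{-2}$ into the factor $2^{-\Sj}$, exactly as in the paper's computation. Your intermediate Cauchy--Schwarz over the finitely many active indices $k$ is a slightly cleaner way of organizing the same integral estimate than the paper's direct manipulation, but it is not a genuinely different argument.
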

\begin{proof}
We have 
\begin{eqnarray*}
\sigma_j^2=\var{(U_j(Y_1,W_1))} &\leq& \E\left [ \left | U_j(Y_1,W_1) \right |^2\right ] \\
&=& \E  \left [\left |Y_1\sum_k  \left (\mathcal{D}_j \varphi \right)_{j,k}(W_1)\varphi_{jk}(x)  \right |^2 \right ] \\
&=& \E  \left [\left |(m(X_1)+\e_1)\sum_k  \left (\mathcal{D}_j \varphi \right)_{j,k}(W_1)\varphi_{jk}(x)  \right |^2 \right ]  \\
&\leq& 2(\| m\|_{\infty}^2+\sigma_{\e}^2)  \E  \left [\left |\sum_k  \left (\mathcal{D}_j \varphi \right)_{j,k}(W_1)\varphi_{jk}(x)  \right |^2 \right ]  \\
&\leq & 2(\| m\|_{\infty}^2+\sigma_{\e}^2) \int \left  |\sum_k  \left (\mathcal{D}_j \varphi \right)_{j,k}(w)\varphi_{jk}(x) \right |^2 f_{W}(w)dw \\
&\leq &  2(\| m\|_{\infty}^2+\sigma_{\e}^2) \| f_{X}\|_{\infty }\int 2^{\Sj}\left  |\sum_k  \left (\mathcal{D}_j \varphi \right)(2^jw-k)\varphi_{jk}(x) \right |^2dw. \\
\end{eqnarray*}
 Now making the change of variable $z= 2^jw-k$, we get using Lemma \ref{Wavelets} and  Lemma \ref{operateurK_j} to bound $(\mathcal{D}_j \varphi )(z)$
\begin{eqnarray*}
\sigma_j^2 &\leq&  2(\| m\|_{\infty}^2+\sigma_{\e}^2) \| f_{X}\|_{\infty } \int \left  |\sum_k  \left (\mathcal{D}_j \varphi \right)(z)\varphi_{jk}(x) \right |^2 dz \\
&\leq &  C \int 2^{2\Sj\nu} \prod_{i=\el }^{d }\frac{1}{(1+|z_\el |)^2}  \left(\sum_k | \varphi_{jk}(x)|\right)^2 dz \\
&\leq& R_{10} 2^{ {\Sj(2\nu+1)} },
\end{eqnarray*}
where $R_{10}$ is a constant depending on $\|m\|_{\infty}, s, \|f_X\|_{\infty}, \varphi,c_g,\mathcal{C}_g$. This gives the bound for $\sigma_j^2$. 

For $\|T_j\|_\infty$, using again Lemma \ref{Wavelets} and  Lemma \ref{operateurK_j}, we have 
\begin{eqnarray*}
\|T_j\|_\infty&\leq& \max_k \|(\mathcal{D}_j \varphi)_{j,k}\|_\infty \sum_k | \varphi_{jk}(x)|\leq 2^{\frac \Sj 2}  \|(\mathcal{D}_j \varphi)\|_\infty \sum_k | \varphi_{jk}(x)|\\
&\leq & R_{11}  2^{\Sj(\nu+1)},
\end{eqnarray*}
where $R_{11}$ is a constant depending on $\varphi$, $c_g$, $\mathcal{C}_g$.
 
\end{proof}

\noindent \textbf{Acknowledgements}:  The research of Thanh Mai Pham Ngoc and Vincent Rivoirard is partly supported by the french Agence Nationale de la Recherche (ANR 2011 BS01 010 01 projet Calibration). Micha\"el Chichignoud now works at Winton Capital Management, supported in part as member of the German-Swiss Research Group FOR916 (Statistical Regularization and Qualitative Constraints) with grant number 20PA20E-134495/1.

\bibliographystyle{apalike}
\bibliography{biblio}

%

\end{document}